\def\epsilon{\varepsilon}
\def\natnums{\mathbb N}
\def\reals{\mathbb R}
\def\R{\reals}
\def\N{\natnums}
\newcommand\eps{\ensuremath{\varepsilon}}
\newcommand{\norm}[1]{\left\Vert#1\right\Vert}
\def\weak{{\omega}}
\newtheorem{theo}{Theorem}[section]
\newtheorem{lem}[theo]{Lemma}
\newtheorem{pro}[theo]{Proposition}
\newtheorem{cor}[theo]{Corollary}
\theoremstyle{definition}
\newtheorem{defi}[theo]{Definition}
\theoremstyle{remark}
\newtheorem{rem}[theo]{Remark}
\numberwithin{equation}{section}
\newcommand{\supp}{\operatorname{supp}}
\newcommand{\ee}{\varepsilon}
\newcommand{\la}{\langle}
\newcommand{\ra}{\rangle}
\newcommand{\dualC}[1][K]{\mathcal{M}(#1)}
\def\dmu{\;{\rm d}\mu}
\newcommand{\cantor}{\mathcal{C}}
\newcommand{\tcantor}{2^{<\mathbb{N}}}
\newcommand{\mm}[2]{\tilde{m}^{\scriptscriptstyle (#1)}_{\scriptscriptstyle #2}}
\newcommand{\mf}[1]{m_{\scriptscriptstyle #1}}
\title{On the Bishop-Phelps-Bollob\'as property for numerical radius in $C(K)$ spaces}
\author{A. Avil\'{e}s}
\address{Departamento de Matem\'{a}ticas\\
Facultad de Matem\'{a}ticas\\ Universidad de Murcia\\ 30100 Espinardo (Murcia)\\
Spain} \email{avileslo@um.es}
\author{A. J. Guirao}
\address{Instituto Universitario de Matem\'{a}tica Pura y Aplicada\\ 
Universidad Polit\'ecnica de Valencia\\ Camino de Vera s/n\\ 46022 Valencia\\ Spain}\email{anguisa2@mat.upv.es}
\author{J. Rodr\'{i}guez}
\address{Departamento de Matem\'{a}tica Aplicada\\
Facultad de Inform\'{a}tica\\ Universidad de Murcia\\ 30100 Espinardo (Murcia)\\
Spain} \email{joserr@um.es}
\date{\today}
\dedicatory{Dedicated to Irene}
\subjclass[2010]{46B20, 47A12, 54E45}
\keywords{Bishop-Phelps-Bollob\'{a}s property, numerical radius, space of continuous functions, space
of measures, compact space}
\thanks{Research supported by {\em Ministerio de Econom\'{i}a y Competitividad} and FEDER 
under project MTM2011-25377. 
A. Avil\'{e}s was supported by {\em Ram\'{o}n y Cajal} contract (RYC-2008-02051). A. J. Guirao was supported by Generalitat Valenciana (GV/2010/036).}
\begin{document}

\begin{abstract}
We study the Bishop-Phelps-Bollob\'{a}s property for numerical radius within the framework of $C(K)$ spaces. 
We present several sufficient conditions on a compact space~$K$ ensuring 
that $C(K)$ has the Bishop-Phelps-Bollob\'{a}s property for numerical radius. 
In particular, we show that $C(K)$ has such property whenever $K$ is metrizable.
\end{abstract}

\maketitle

\section{Introduction}

The Bishop-Phelps-Bollob\'{a}s property for numerical radius has been recently introduced 
in~\cite{GK} as a quantitative way of studying the set of operators on a Banach space
that attain their numerical radius (see below for precise definitions). 
Since Sims \cite{Sims} raised the question of the norm denseness of the set of numerical radius 
attaining operators, several results have been obtained in this direction.  
Acosta initiated a systematic study of this problem in her Ph.D. Thesis~\cite{AcostaThesis}, 
followed by~\cite{ARenorming} and joint works with Pay\'{a}~\cite{APsecondadj, APRNP}.  
Prior to them, Berg and Sims~\cite{BergSims} gave a positive answer for uniformly convex spaces 
and Cardassi obtained positive answers for $\ell_1$, $c_0$, $C(K)$ ($K$ compact metric space), 
$L_1(\mu)$ and uniformly smooth spaces, see~\cite{Card-USm,Card-c0,Card-CK}.
Note that Johnson and Wolfe~\cite{JW} had already shown that the set of norm attaining operators $T\colon C(K)\to C(L)$ 
is norm dense in the space of operators $\mathfrak{L}(C(K), C(L))$, where 
$K$ and $L$ are arbitrary compact spaces.  Acosta~\cite{AcostaThesis} 
pointed out that an operator $T\colon C(K)\to C(K)$ attains its norm if and only if it 
attains it numerical radius. This observation together with Johnson and Wolfe's result 
led her to conclude that the set of numerical radius attaining operators on $C(K)$ 
is dense in~$\mathfrak{L}(C(K))$.

Using a renorming of~$c_0$, Pay\'{a}~\cite{Paya} provided an example of a Banach space 
$X$ such that the set of numerical radius attaining operators on~$X$ is not norm dense
in~$\mathfrak{L}(X)$, answering in the negative Sims' question. Acosta, 
Aguirre and Pay\'{a}~\cite{AAP} gave another counterexample: 
$X=\ell_2 \oplus_{\infty}G$, where $G$ is Gowers' space. 
Observe that these examples show that there exist Banach spaces failing the 
Bishop-Phelps-Bollob\'{a}s property for numerical radius.

In~\cite{GK} it is shown that $\ell_1$ and $c_0$ have the Bishop-Phelps-Bollob\'{a}s property 
for numerical radius. In fact, the proof for~$c_0$ can be reduced to a duality argument 
from the proof for~$\ell_1$. In this paper we focus on the Banach space $C(K)$ and we
discuss whether this space has the Bishop-Phelp-Bollob\'{a}s property for numerical radius. 
Trying to transfer the ideas in~\cite{GK} to the $C(K)$ case is clearly not enough. 

We now summarize briefly the contents of this paper.

In Section~\ref{section2} we introduce the concepts of \emph{compensation} of a regular measure
and of compact space admitting \emph{local compensation} (Definition~\ref{defi:compensation}).  
These notions are essential tools for our proofs and are applied
to obtain a parametric version of the classical Bishop-Phelps-Bollob\'{a}s theorem for functionals on~$C(K)$
(Lemma~\ref{lBPB-C(K)}). Then we show that $C(K)$ has the Bishop-Phelps-Bollob\'{a}s property for 
numerical radius whenever $K$ admits local compensation (Theorem~\ref{lBPBP-C(K)}). 

In Section~\ref{section3} we show that every compact metric space 
admits local compensation. In fact, a stronger result holds true, namely, that 
every compact metric space admits a \emph{compensation function} (Definition~\ref{defi:MCompensation}).  
We rely on the constructive proof that the Cantor set 
admits a compensation function (Theorem~\ref{theo:Cantor}) and the fact that 
compensation functions can be transferred to other compacta 
via regular averaging operators (Lemma~\ref{pro:QuotientCompensation}). 
As a consequence of Theorem~\ref{lBPBP-C(K)}, it turns out that
$C(K)$ has the Bishop-Phelps-Bollob\'{a}s property for numerical radius whenever $K$ is metrizable. 

In Section~\ref{section4} we discuss the case of non-metrizable compacta. 
With the help of the auxiliary concept of \emph{closeness function}, we present two 
examples of compact spaces admitting local compensation but no compensation function 
(Theorems~\ref{theo:AGamma} and \ref{theo:ordinalAA}). 
We also show that there exist compact spaces that do not admit local compensation.
We finish the paper with some open problems, see Subsection~\ref{section5}.

\subsection*{Terminology}

By countable we mean finite or countably infinite. The first uncountable ordinal
is denoted by~$\omega_1$.
All our Banach spaces~$X$ are real. We write 
$$
	B_{X}=\{x\in X\colon \|x\|\leq 1\}
	\quad\mbox{and} 
	\quad 
	S_X=\{x\in X\colon\|x\|=1\}.
$$ 
The topological dual of~$X$ is denoted by~$X^*$ and the weak$^*$ topology on~$X^*$ is
denoted by~$\weak^*$. The evaluation of $x^*\in X^*$ at $x\in X$
is denoted by $x^*(x)=\la x^*,x \ra = \la x, x^* \ra$. We write
$\Pi(X)=\{(x,x^*)\in S_X \times S_{X^*}\colon x^*(x)=1\}$. 
We write 
$$
	\pi_2(x)=\{x^*\in B_{X^*}\colon x^*(x)=1\} 
	\quad\mbox{and}
	\quad
	\pi_2(x,\delta)=\{x^*\in B_{X^*}\colon\,x^*(x)\geq 1-\delta\}
$$
for every $x\in B_X$ and $\delta>0$. 
By an operator on~$X$ we mean a linear continuous mapping $T\colon X \to X$. Its numerical radius
is defined by
$$
	\nu(T)=\sup\{ |\la x^*, T(x) \ra| \colon (x,x^*)\in \Pi(X)\}.
$$
The Banach space of all operators on~$X$ is denoted by~$\mathfrak{L}(X)$. 
It is well known that $\nu(\cdot)$ is a continuous seminorm on~$\mathfrak{L}(X)$.
In general, there exists a constant $n(X) \geq 0$ (the \emph{numerical index} of~$X$) such that
\[
	n(X)\norm{T}\leq\nu(T)\leq\norm{T} \quad\text{for all }T\in\mathfrak{L}(X).
\]
For background in numerical radius (resp. index) we refer to~\cite{BD1,BD2} 
(resp.~\cite{Kad-Mart-Paya}).
The Bishop-Phelps-Bollob\'{a}s property we are concerned about is defined as:

\begin{defi}\label{defi:BPBpNR}
We say that a Banach space $X$ has the {\em Bishop-Phelps-Bollob\'as (BPB) property for numerical radius}
if there is a function $\delta\colon (0,1)\to(0,1)$ such that: for every $0<\eps<1$, 
$T\in \mathfrak{L}(X)$ with $\nu(T)=1$ and $(x,x^*)\in \Pi(X)$ with $\la x^*,T(x) \ra \geq 1-\delta(\eps)$,
there exist $T_0\in \mathfrak{L}(X)$ with $\nu(T_0)=1$ and $(x_0,x_0^*)\in \Pi(X)$ with $\la x_0^*, T_0(x_0)\ra=1$ such that
$\nu(T-T_0)\leq \epsilon$, $\|x-x_0\|\leq \epsilon$ and $\|x^*-x_0^*\|\leq \epsilon$.
\end{defi}

Let $K$ be a compact space (i.e. compact Hausdorff topological space). We denote by $C(K)$ 
the Banach space of all continuous real-valued functions on~$K$ (equipped with the supremum norm).
It is known that $n(C(K))=1$ and therefore $\nu(T)=\|T\|$ for every $T\in \mathfrak{L}(C(K))$. 
Given any $f\in C(K)$ and $r\in \R$, we freely use notations like
$\{f\leq r\}=\{t\in K\colon f(t) \leq r\}$. The dual $C(K)^*$ is identified (via Riesz's theorem)
with the Banach space $\dualC$ of all regular Borel (signed) measures on~$K$ (equipped
with the total variation norm). We write $\mathcal{M}^+(K)=\{\mu \in \dualC\colon \mu \geq 0\}$. 
For every $t\in K$ we denote by $\delta_t\in \dualC$ the Dirac measure at~$t$. As usual,
given any $\mu\in \dualC$, we write $|\mu|$, $\mu^+$ and $\mu^-$ to denote, respectively, the variation, positive
part and negative part of~$\mu$. By a Hahn decomposition of~$\mu$ we mean a partition $(P,N)$
of~$K$ into Borel sets such that $\mu(B)\geq 0$ (resp. $\mu(B)\leq 0$) for every Borel
set $B \subseteq P$ (resp. $B \subseteq N$). The support of~$\mu$
is denoted by ${\rm supp}(\mu)$. Given $\mu_1,\mu_2 \in \dualC$, 
we write $\mu_1 \ll \mu_2$ (resp. $\mu_1 \perp \mu_2$)
if $\mu_1$ is absolutely continuous with respect to~$\mu_2$
(resp. $\mu_1$ and $\mu_2$ are mutually singular).

\section{BPB property for numerical radius in $C(K)$}\label{section2}

Throughout this section $K$ is a fixed compact space. Our aim 
is to give a sufficient condition ensuring that $C(K)$ has the BPB
property for numerical radius, namely, that $K$ admits {\em local compensation} (see the following definition).
In Sections~\ref{section3} and~\ref{section4} we shall prove that $K$ admits local compensation whenever it is metrizable, as well as
in other cases.

\begin{defi}\label{defi:compensation}
Let $W(K)$ be the set of all $\weak^*$-continuous functions $F\colon K\to B_{\dualC}$.
\begin{enumerate}

\item[(i)] We say that $\nu\in \dualC$ is a \emph{compensation} of $\mu\in\dualC$ provided that: 
\begin{itemize}
\item $0\leq \nu \leq \mu^+$ and $\nu(K)=\mu(K)$ if $\mu(K)> 0$;
\item $\nu=0$ if $\mu(K)\leq 0$.
\end{itemize}

\item[(ii)] We say that $G \in W(K)$ is a {\em compensation} of~$F\in W(K)$ 
if $G(t)$ is a compensation of~$F(t)$ for every $t\in K$.

\item[(iii)] We say that $K$ admits {\em local compensation} if every 
element of~$W(K)$ admits a compensation.
\end{enumerate}
\end{defi}

\begin{theo}\label{lBPBP-C(K)}
If $K$ admits local compensation, then $C(K)$ has the BPB 
property for numerical radius.
\end{theo}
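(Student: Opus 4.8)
The strategy is to start from the numerical-radius data $(T,x,x^*)$ and reduce the BPB problem for numerical radius to a problem about simultaneously perturbing a function $f\in S_{C(K)}$ and an operator $T$ so that $T$ attains its norm at $f$ at a prescribed point. Recall that $\nu(T)=\|T\|$ on $C(K)$, and an operator $T\colon C(K)\to C(K)$ attains its norm iff it attains its numerical radius; so the heart of the matter is a parametrized norm-attainment statement. We expect the excerpt's Lemma~\ref{lBPB-C(K)} (the ``parametric Bishop--Phelps--Bollob\'as theorem for functionals on $C(K)$'') to be exactly the tool that does this: given a $\weak^*$-continuous selection $t\mapsto \mu_t = $ (a representing measure for the $t$-th coordinate functional $\delta_t\circ T$, suitably normalized), together with a function $f$ nearly norming all of them, we want to produce a nearby $\weak^*$-continuous selection $t\mapsto\nu_t$ and a nearby $f_0$ with $\langle \nu_t, f_0\rangle = \|\nu_t\|$ for all $t$. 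The compensation hypothesis on $K$ is what guarantees such a selection exists with the right absolute-continuity/support structure.

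**Key steps, in order.**
First I would set up the dictionary: write $Tf(t) = \langle \delta_t\circ T, f\rangle$ and let $F\colon K\to B_{\mathcal M(K)}$ be defined by $F(t) = (\delta_t\circ T)$; this $F$ lies in $W(K)$ because $T$ is bounded and $Tf$ is continuous for each $f$, which gives $\weak^*$-continuity of $F$ together with the norm bound $\|F(t)\|\le\|T\|=1$. Second, translate the hypothesis $\langle x^*,T(x)\rangle\ge 1-\delta(\eps)$: writing $x^* = \sum$ (a measure in $\mathcal M(K)$) one sees that $\langle x^*, Tx\rangle = \int_K \langle F(t), x\rangle\, dx^*(t)$, so up to a small exceptional set (in $|x^*|$-measure) we have $\langle F(t), x\rangle$ close to $\pm 1$, i.e. $x$ nearly norms $F(t)$ for most $t$. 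Third, apply local compensation to the function $F$ (or to a slightly adjusted $F$ that incorporates the sign information coming from a Hahn decomposition of $x^*$), obtaining $G\in W(K)$ that is a compensation of $F$, so each $G(t)$ is a positive measure $\le F(t)^+$ with the same total mass when that mass is positive. Fourth, feed $G$ and $x$ into the parametric BPB lemma for functionals (Lemma~\ref{lBPB-C(K)}) to get a nearby function $x_0\in S_{C(K)}$ and a nearby selection $H\in W(K)$ with $\langle H(t), x_0\rangle = \|H(t)\|$ for every $t$. Fifth, reassemble: define $T_0\colon C(K)\to C(K)$ by $(T_0 f)(t) = \langle H(t), f\rangle$ (continuity of $T_0f$ from $\weak^*$-continuity of $H$, boundedness from the norm bound), so that $\|T_0\|=\nu(T_0)$ is controlled, $\nu(T-T_0)$ is small because $H$ is $\weak^*$-close to $F$ in the relevant sense, and $\langle x_0^*, T_0 x_0\rangle = 1$ where $x_0^*$ is the measure built from the $t_0$ at which $\|T_0\|$ is attained (using compactness to choose such a $t_0$ and $\weak^*$-compactness of $B_{\mathcal M(K)}$ to realize the supremum). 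Along the way I would verify $(x_0, x_0^*)\in\Pi(C(K))$ and the three norm estimates $\nu(T-T_0)\le\eps$, $\|x-x_0\|\le\eps$, $\|x^*-x_0^*\|\le\eps$, choosing $\delta(\eps)$ at the end to absorb all the accumulated losses.

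**Main obstacle.**
The delicate point is not the existence of the compensation $G$ (that is handed to us by the hypothesis) nor the parametric functional BPB lemma (also handed to us), but rather the \emph{two-sided} control needed to close the loop: we must perturb $T$ to $T_0$ \emph{and} perturb $x^*$ to $x_0^*$ \emph{and} perturb $x$ to $x_0$ simultaneously, keeping all three within $\eps$, while forcing the exact equality $\langle x_0^*, T_0(x_0)\rangle = 1$. The subtlety is that $x^*$ and $T$ are coupled through the formula $\langle x^*, Tx\rangle = \int \langle F(t),x\rangle\,dx^*(t)$, so the perturbation of $T$ (equivalently, of the selection $t\mapsto F(t)$) changes what ``$x^*$ should be'' and vice versa; we need the compensation machinery precisely to decouple these by replacing each $F(t)$ with a one-signed measure of controlled mass, so that a single well-chosen $t_0$ and the Dirac-like structure at $t_0$ produce a legitimate $x_0^*$. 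Getting the quantitative bookkeeping so that the exceptional-set estimate (from step two), the compensation perturbation, and the functional-BPB perturbation all fit under a single $\delta(\eps)$ — uniformly in $K$ — is where the real work lies, and I expect it to mirror, but be genuinely harder than, the $\ell_1$/$c_0$ arguments of~\cite{GK}.
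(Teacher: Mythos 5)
Your global setup --- encoding $T$ as the $\weak^*$-continuous selection $F(t)=\delta_t\circ T$, invoking the parametric functional lemma, and reassembling $T_0$ from a perturbed selection --- is the same as the paper's. But there are two genuine gaps in how you close the argument. The first concerns $x_0^*$: you propose to take it to be ``the measure built from the $t_0$ at which $\|T_0\|$ is attained'', i.e.\ something Dirac-like at a single point. The requirement is $\|x^*-x_0^*\|\le\eps$ in total variation, and an arbitrary $x^*=\mu\in S_{\dualC}$ (e.g.\ a diffuse measure) is at distance $2$ from every measure concentrated near one point, so no choice of $t_0$ can work. The paper instead applies the (non-parametric) functional Bishop--Phelps--Bollob\'as corollary to the pair $(T(f),\mu)$, producing $\mu_0$ with $\|\mu-\mu_0\|\le\delta$; the crucial extra facts, extracted from the construction via Remark~\ref{lrem:Hahn}, are that $\mu_0\ll\mu$ with the same Hahn decomposition --- hence $\mu_0(f)=1$, so the pair stays in $\Pi(C(K))$ after replacing $f$ by $f_0$ --- and that $\supp(\mu_0)$ lies in the set $D_1\cup D_2$ where $T(f)$ is within $\delta$ of $\pm1$, with the correct signs. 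That support and sign information is exactly what makes $\langle\mu_0,T_0(f_0)\rangle=1$ verifiable at the end; your proposal has no mechanism producing a functional that is simultaneously close to $x^*$, norming for $x_0$, and concentrated where the perturbed operator is extremal.

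The second gap: you ask for a selection $H$ with $\langle H(t),x_0\rangle=\|H(t)\|$ for \emph{every} $t\in K$. This is neither achievable nor needed. The parametric lemma only produces the norming selection $\mathcal{P}_F(t)$ on $F^{-1}(\pi_2(f,\eps^2/6))$, i.e.\ on those $t$ where $F(t)$ already nearly norms $f$; elsewhere nothing can be said. The paper therefore applies the lemma twice (to $F$ on $A_1=\{T(f)\ge 1-\eps^2/6\}$ and to $-F$ on $A_2=\{T(f)\le -1+\eps^2/6\}$) and glues $\mathcal{P}_F$ and $\mathcal{P}_G$ back onto $F$ using Urysohn-type bump functions that equal $\pm 1$ on $D_1$, $D_2$ and vanish off $A_1\cup A_2$; convex combinations keep $\widetilde F(K)\subseteq B_{\dualC}$, hence $\|T_0\|\le 1$, and the pointwise $\eps$-estimates survive the interpolation. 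Since $\mu_0$ is supported in $D_1\cup D_2$, attainment only needs to hold there. Without this localization-and-gluing step your $T_0$ is not defined (or not controlled) on all of $K$, and the quantitative bookkeeping you defer to the end cannot be completed.
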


In order to prove Theorem~\ref{lBPBP-C(K)} we need several lemmas.
Let us first point out that compensations of single measures always exist:

\begin{rem}\label{rem:ExistenceSingle}
If $\mu\in \dualC$ satisfies $\mu(K)>0$ and we set $\lambda:=\frac{\mu(K)}{\mu^+(K)}\in (0,1]$, then 
$\nu:=\lambda\mu^+$ is a compensation of~$\mu$.
\end{rem}

\begin{lem}\label{simplifynorm}
If $\nu\in \dualC$ is a compensation of $\mu\in\dualC$, then 
$\|\mu-\nu\|\leq 2\|\mu^-\|$ and $\|\nu\|\leq \|\mu\|$.
\end{lem}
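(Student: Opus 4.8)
The plan is to unwind the definition of compensation and compute directly, splitting into the two cases dictated by the sign of $\mu(K)$. First suppose $\mu(K)\leq 0$. Then by definition $\nu=0$, so $\|\nu\|=0\leq\|\mu\|$ trivially, and $\|\mu-\nu\|=\|\mu\|=\|\mu^+\|+\|\mu^-\|$. The point here is that $\mu(K)=\mu^+(K)-\mu^-(K)\leq 0$ forces $\|\mu^+\|\leq\|\mu^-\|$, whence $\|\mu-\nu\|=\|\mu^+\|+\|\mu^-\|\leq 2\|\mu^-\|$, as desired.

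Now suppose $\mu(K)>0$. Then $0\leq\nu\leq\mu^+$, so in particular $\|\nu\|=\nu(K)\leq\mu^+(K)\leq\|\mu\|$, giving the second inequality. For the first, write $\mu-\nu=(\mu^+-\nu)-\mu^-$; since $0\leq\nu\leq\mu^+$, the measure $\mu^+-\nu$ is nonnegative and singular with respect to $\mu^-$ (because $\mu^+\perp\mu^-$ and $\mu^+-\nu\ll\mu^+$), so $\|\mu-\nu\|=\|\mu^+-\nu\|+\|\mu^-\|$. It remains to bound $\|\mu^+-\nu\|=\mu^+(K)-\nu(K)$. Using $\nu(K)=\mu(K)=\mu^+(K)-\mu^-(K)$, we get $\mu^+(K)-\nu(K)=\mu^-(K)=\|\mu^-\|$, and therefore $\|\mu-\nu\|=\|\mu^-\|+\|\mu^-\|=2\|\mu^-\|$. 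Combining the two cases yields $\|\mu-\nu\|\leq 2\|\mu^-\|$ in general.

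There is no real obstacle here; the only point requiring a moment of care is the additivity of the total variation norm under the decompositions used, which rests on the mutual singularity of the relevant nonnegative measures (a consequence of absolute continuity with respect to $\mu^+$ on one side and equality to $\mu^-$ on the other). I would state this additivity explicitly where it is invoked rather than leave it implicit.
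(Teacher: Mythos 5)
Your proof is correct and follows essentially the same route as the paper: the decomposition $\mu-\nu=(\mu^+-\nu)-\mu^-$ with additivity of the total variation norm from the mutual singularity $(\mu^+-\nu)\perp\mu^-$, then the computation $\mu^+(K)-\nu(K)=\mu^-(K)$. The only difference is that you spell out the case $\mu(K)\leq 0$, which the paper dismisses as obvious; your verification of it is also correct.
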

\begin{proof} This is obvious if $\mu(K)\leq 0$. Suppose 
$\mu(K) > 0$. 
Since $(\mu^+-\nu) \perp \mu^-$, we have
\begin{multline*}
	\norm{\mu-\nu} = 
	\norm{(\mu^+ - \nu) - \mu^-} = 
	\norm{\mu^+ - \nu}+\norm{\mu^-} = \\ =
	(\mu^+ - \nu)(K) + \mu^-(K)=\mu^+(K)-\mu(K)+\mu^-(K)
	=2\mu^-(K)=2\norm{\mu^-}.
\end{multline*}
On the other hand, $\|\nu\|=\nu(K)=\mu(K) \leq \|\mu\|$. 
\end{proof}

\begin{lem}\label{lemmapi}
Let $(f,\mu)\in S_{C(K)}\times S_{\dualC}$ and let $(P,N)$ be a Hahn decomposition of~$\mu$. 
Then $\mu(f)=1$ if and only if 
$$
	|\mu|\big((\{f=1\}\cap P) \cup (\{f=-1\}\cap N)\big)=1.
$$
\end{lem}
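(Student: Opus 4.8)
The plan is to unwind the identity $\mu(f)=1$ using the total variation norm of $\mu$ and the boundedness of $f$. Write $\mu = \mu^+ - \mu^-$ with $\mu^+$ concentrated on $P$ and $\mu^-$ concentrated on $N$, so that $|\mu| = \mu^+ + \mu^-$ and $\|\mu\| = |\mu|(K) = 1$. Since $\|f\|_\infty = 1$, we have $|f(t)| \leq 1$ everywhere, hence the estimate
$$
	\mu(f) = \int_P f \dmu^+ - \int_N f \dmu^- \leq \int_P |f| \dmu^+ + \int_N |f| \dmu^- = \int_K |f| \, {\rm d}|\mu| \leq |\mu|(K) = 1.
$$
Equality $\mu(f)=1$ therefore forces equality throughout. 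First I would argue that equality in $\int_K |f| \, {\rm d}|\mu| = 1 = |\mu|(K)$ means $|f|=1$ for $|\mu|$-almost every $t$, i.e. $|\mu|(\{|f|<1\})=0$. Next, equality in $\int_P f\dmu^+ \le \int_P |f|\dmu^+$ forces $f \geq 0$ $\mu^+$-a.e., which combined with $|f|=1$ $|\mu|$-a.e. gives $f=1$ for $\mu^+$-almost every point of $P$; symmetrically, equality in $-\int_N f\dmu^- \le \int_N|f|\dmu^-$ forces $f=-1$ for $\mu^-$-almost every point of $N$. Consequently $\mu^+$ is concentrated on $\{f=1\}\cap P$ and $\mu^-$ is concentrated on $\{f=-1\}\cap N$; since these two sets are disjoint, $|\mu|$ is concentrated on their union, which is exactly the asserted identity
$$
	|\mu|\big((\{f=1\}\cap P) \cup (\{f=-1\}\cap N)\big) = |\mu|(K) = 1.
$$

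For the converse, assume $|\mu|(A)=1$ where $A = (\{f=1\}\cap P) \cup (\{f=-1\}\cap N)$. Then $|\mu|$ is concentrated on $A$, so both $\mu^+$ and $\mu^-$ are concentrated on $A$; moreover, since $\mu^+$ lives on $P$ and $\mu^-$ lives on $N$, we get that $\mu^+$ is concentrated on $\{f=1\}\cap P$ and $\mu^-$ on $\{f=-1\}\cap N$. Hence
$$
	\mu(f) = \int_{\{f=1\}\cap P} f\dmu^+ - \int_{\{f=-1\}\cap N} f\dmu^- = \mu^+(K) + \mu^-(K) = |\mu|(K) = 1.
$$

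The main obstacle, such as it is, lies in the measure-theoretic bookkeeping of the forward direction: being careful that the Hahn decomposition $(P,N)$ only determines $\mu^+$ and $\mu^-$ up to $|\mu|$-null sets, so all statements about where $f$ equals $\pm 1$ must be phrased "almost everywhere" and only then converted to the stated full-measure conclusion about the Borel set $A$. One should note $A$ is indeed Borel since $f$ is continuous (so $\{f=1\}$ and $\{f=-1\}$ are closed) and $P,N$ are Borel by hypothesis. No compactness or regularity of $\mu$ beyond what is built into $\dualC$ is needed here; the argument is purely about the interplay between the $L^1(|\mu|)$-norm of $f$ and the constraint $\|f\|_\infty=1$.
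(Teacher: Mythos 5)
Your proof is correct and follows essentially the same idea as the paper's: both exploit the chain $\mu(f)\le\int_K|f|\,{\rm d}|\mu|\le|\mu|(K)=1$ and extract from the equality case that $f=1$ (resp.\ $f=-1$) holds $|\mu|$-almost everywhere on $P$ (resp.\ $N$). The paper phrases this by showing $|\mu|(\{f\ne 1\}\cap P)=|\mu|(\{f\ne -1\}\cap N)=0$ directly, while you route it through the Jordan decomposition $\mu=\mu^+-\mu^-$; the difference is purely organizational.
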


\begin{proof}
Write $A:=(\{f=1\}\cap P) \cup (\{f=-1\}\cap N)$. Observe first that
\begin{equation}\label{setA}
	\int\limits_{A} f\,\dmu=
	\int\limits_{\{f=1\}\cap P} f\,\dmu + 
	\int\limits_{\{f=-1\}\cap N} f\,\dmu = 
	|\mu|(A).
\end{equation}
Therefore, if $|\mu|(A)=1$ then $\mu(f)=\int_{A} f\,\dmu=1$.

Conversely, if $\mu(f)=1$ then 
$$
	1=\mu(f)=\int_{A} f\,\dmu + \int_{K \setminus A} f\,\dmu 
	\stackrel{\eqref{setA}}{=}|\mu|(A)+\int_{K \setminus A} f\,\dmu
$$
and so $|\mu|(K \setminus A)=\int_{K \setminus A} f\,\dmu$. Since we have
$$
	\alpha:=\int\limits_{\{f \neq 1\}\cap P} f\,\dmu \leq |\mu|\big(\{f \neq 1\}\cap P\big),
	\quad \beta:=\int\limits_{\{f\neq -1\}\cap N} f\,\dmu \leq |\mu|\big(\{f \neq -1\}\cap N\big)
$$
and 
$$
	\alpha+\beta = \int\limits_{K \setminus A} f\,\dmu = 
	|\mu|(K\setminus A) = |\mu|\big(\{f \neq 1\}\cap P\big)+
	  |\mu|\big(\{f \neq -1\}\cap N\big),
$$
it follows that
\begin{equation}\label{null1}
	|\mu|\big(\{f \neq 1\}\cap P\big)=\alpha=\int\limits_{\{f \neq 1\}\cap P} f\,{\rm d}|\mu|
\end{equation}
and
\begin{equation}\label{null2}	
	|\mu|\big(\{f \neq -1\}\cap N\big)=\beta=-\int\limits_{\{f\neq -1\}\cap N} f\,{\rm d}|\mu|. 
\end{equation}
Clearly, \eqref{null1} yields $|\mu|\big(\{f \neq 1\}\cap P\big)=0$ and
\eqref{null2} yields $|\mu|\big(\{f \neq -1\}\cap N\big)=0$, so that
$|\mu|(K\setminus A)=0$. Therefore $|\mu|(A)=1$.
\end{proof}

\begin{defi}
Let $f\in C(K)$ and $0<\sigma< \ee$. Since the sets 
$\{f\geq 1-\sigma\}$ and $\{f\leq 1-\ee\}$ are closed and disjoint, 
Tietze extension theorem ensures the existence of a non-negative $u_{\sigma,\ee}^{f}\in B_{C(K)}$ 
such that 
$$
	u_{\sigma,\ee}^{f}|_{\{f\geq 1-\sigma\}}\equiv 1 
	\quad\mbox{and}\quad 
	u_{\sigma,\ee}^{f}|_{\{f\leq 1-\ee\}} \equiv 0.
$$
In the same way, there is a non-negative $v_{\sigma,\ee}^{f}\in B_{C(K)}$ 
such that 
$$
	v_{\sigma,\ee}^{f}|_{\{f\leq -1+\sigma\}}\equiv 1 
	\quad\mbox{and}\quad 
	v_{\sigma,\ee}^{f}|_{\{f\geq -1+\ee\}} \equiv 0.
$$
Given any $\mu\in \dualC$, we define $\mu_{\sigma,\ee}^{f,1},\mu_{\sigma,\ee}^{f,2}\in \dualC$ by
\[
	\mu_{\sigma,\ee}^{f,1}(g):=\int_K g\cdot u_{\sigma,\ee}^f\dmu
	\quad \mbox{and} \quad
	\mu_{\sigma,\ee}^{f,2}(g):=\int_K g\cdot v_{\sigma,\ee}^f\dmu \quad \mbox{for all } g\in C(K).
\]
\end{defi}

\begin{rem}
\begin{enumerate} 
\item If $\epsilon < 1$ then $\mu_{\sigma,\ee}^{f,1} \perp \mu_{\sigma,\ee}^{f,2}$.
\item The mappings $\mu \mapsto \mu_{\sigma,\ee}^{f,1}$ and $\mu\mapsto \mu_{\sigma,\ee}^{f,2}$ are  
$\weak^*$-$\weak^*$-continuous.
\end{enumerate}
\end{rem}
		
\begin{lem}\label{basiclemma}
Let $f\in B_{C(K)}$, $\mu\in B_{\dualC}$ and $0<\sigma<\ee < 1$. Then:
\begin{enumerate}
\item $\|\mu_{\sigma,\ee}^{f,1}\|\leq 1$ and $\|\mu_{\sigma,\ee}^{f,2}\|\leq 1$;
\item $\big\|(\mu_{\sigma,\ee}^{f,1})^+\big\| + \big\|(\mu_{\sigma,\ee}^{f,2})^-\big\| \geq 1-(1-\mu(f))/\sigma$;\label{condition1}
\item $\big\|(\mu_{\sigma,\ee}^{f,1})^-\big\| + \big\|(\mu_{\sigma,\ee}^{f,2})^+\big\| \leq (1-\mu(f))/\sigma$;\label{condition2}
\item $\big\| \mu - \mu_{\sigma,\ee}^{f,1} - \mu_{\sigma,\ee}^{f,2} \big\| \leq (1-\mu(f))/\sigma$.\label{condition3}
\end{enumerate}		
\end{lem}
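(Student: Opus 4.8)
The statement is essentially a quantitative bookkeeping lemma about the two "truncated" measures $\mu^{f,1}_{\sigma,\ee}$ and $\mu^{f,2}_{\sigma,\ee}$, so I would attack all four items by unwinding the definitions and carefully splitting $K$ into the regions where the cut-off functions $u:=u^f_{\sigma,\ee}$ and $v:=v^f_{\sigma,\ee}$ are identically $1$, identically $0$, or in between. Set $U_1=\{f\ge 1-\sigma\}$ (where $u\equiv 1$), $U_0=\{f\le 1-\ee\}$ (where $u\equiv 0$), $V_1=\{f\le -1+\sigma\}$ (where $v\equiv 1$), $V_0=\{f\ge -1+\ee\}$ (where $v\equiv 0$); since $\sigma<\ee<1$, $U_1,U_0$ are disjoint and $V_1,V_0$ are disjoint, and moreover $U_1\subseteq V_0$, $V_1\subseteq U_0$, which already gives remark (1) ($\mu^{f,1}\perp\mu^{f,2}$).

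\medskip

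\textbf{Items (1) and (4).} Item (1) is immediate from $0\le u\le 1$: for any Borel $g$ with $|g|\le 1$, $|\mu^{f,1}_{\sigma,\ee}(g)|=|\int g u\,d\mu|\le \int |u|\,d|\mu|\le \|\mu\|\le 1$, and symmetrically for $v$. For item (4), write $\mu-\mu^{f,1}_{\sigma,\ee}-\mu^{f,2}_{\sigma,\ee}$ as the measure with density $1-u-v$ with respect to $\mu$; since $0\le u+v\le 1$ (they have disjoint supports), $0\le 1-u-v\le 1$, and $1-u-v$ vanishes on $U_1\cup V_1$. Hence $\|\mu-\mu^{f,1}-\mu^{f,2}\|\le |\mu|(K\setminus(U_1\cup V_1))$. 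The key elementary estimate is: $1-\mu(f)=\int(1-f)\,d|\mu^+|+\int(1+f)\,d|\mu^-|\ge \sigma\,|\mu|(K\setminus(U_1\cup V_1))$, because on $K\setminus U_1$ one has $1-f>\sigma$ and on $K\setminus V_1$ one has $1+f>\sigma$ — but one must be slightly careful and instead observe $1-\mu(f)\ge\sigma\,|\mu|\big(\{|f|\le 1-\sigma\}\big)$, and $K\setminus(U_1\cup V_1)\subseteq\{|f|\le 1-\sigma\}$ up to the behaviour near $\pm1$; I would phrase this cleanly by noting $1-f(t)\ge\sigma$ for all $t\notin U_1$ and $1+f(t)\ge\sigma$ for all $t\notin V_1$, then integrating $1-f$ against $\mu^+$ and $1+f$ against $\mu^-$ over $K\setminus(U_1\cup V_1)$. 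This yields (4), and the same region-based inequality is the engine for (3).

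\medskip

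\textbf{Items (2) and (3): the main obstacle.} These are the delicate ones, since they involve the Hahn decomposition of the new measures $\mu^{f,1}_{\sigma,\ee}$ and $\mu^{f,2}_{\sigma,\ee}$ rather than of $\mu$. For (3), I claim $(\mu^{f,1}_{\sigma,\ee})^-$ is carried by $\{f<1\}$ (more precisely by $K\setminus U_1$, since on $U_1$ the density $u$ equals $1$ so $\mu^{f,1}$ agrees with $\mu$ there): indeed $(\mu^{f,1})^-(B)\le |\mu^{f,1}|(B\setminus U_1)\le |\mu|(B\setminus U_1)$, so $\|(\mu^{f,1})^-\|\le|\mu|(K\setminus U_1)$, and likewise $\|(\mu^{f,2})^+\|\le|\mu|(K\setminus V_1)$ — wait, one must check signs: $\mu^{f,2}$ has density $v\ge0$, its positive part lives where $\mu>0$; restricted to $V_1$ it equals $\mu$ there, but we want the part NOT coming from $V_1$... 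I would instead argue $(\mu^{f,1})^-$ and $(\mu^{f,2})^+$ are both absolutely continuous with respect to $|\mu|$ restricted to $\{|f|\le 1-\sigma\}\cup(\text{boundary strips})$, giving $\|(\mu^{f,1})^-\|+\|(\mu^{f,2})^+\|\le|\mu|(\{-1+\sigma\le f\le 1-\sigma\})\le(1-\mu(f))/\sigma$ by the inequality above. For (2), use (1), (3) and (4) together with the triangle inequality $\|(\mu^{f,1})^+\|+\|(\mu^{f,2})^-\|\ge\mu^{f,1}(f)+\mu^{f,2}(f)-$(error), combined with $\mu^{f,1}(f)+\mu^{f,2}(f)=\int(u+v)f\,d\mu$ and the fact that $\int(1-u-v)f\,d\mu\le\|\mu-\mu^{f,1}-\mu^{f,2}\|\le(1-\mu(f))/\sigma$; since $\mu(f)=\int f\,d\mu$, this gives $\mu^{f,1}(f)+\mu^{f,2}(f)\ge\mu(f)-(1-\mu(f))/\sigma\ge 1-2(1-\mu(f))/\sigma$, and then bounding $\mu^{f,1}(f)\le\|(\mu^{f,1})^+\|+\|(\mu^{f,1})^-\|$... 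The bookkeeping is finicky precisely because $f$ need not equal $\pm1$ on the relevant carriers, so the real work — and the step I expect to cost the most care — is tracking exactly which region carries which part of $(\mu^{f,i}_{\sigma,\ee})^{\pm}$ and absorbing the defect $1-\mu(f)$ divided by $\sigma$ at each turn.
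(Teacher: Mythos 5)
Your treatment of (i) and (iv) is sound and close to the paper's: the estimate $1-\mu(f)\ \geq\ \int(1-f)\,{\rm d}\mu^{+}+\int(1+f)\,{\rm d}\mu^{-}\ \geq\ \sigma\,|\mu|\big(K\setminus(U_1\cup V_1)\big)$ (using $\|\mu\|\leq 1$ and $1-f\geq\sigma$ off $U_1$, $1+f\geq\sigma$ off $V_1$) does give (iv), and (i) is immediate. But the proposal breaks down exactly where you flag the difficulty. Write $\mu_1:=\mu^{f,1}_{\sigma,\ee}$, $\mu_2:=\mu^{f,2}_{\sigma,\ee}$. For (iii), your claim that $\mu_1^-$ and $\mu_2^+$ are carried by $\{-1+\sigma\leq f\leq 1-\sigma\}$ (or by $K\setminus U_1$) is false: since $u^f_{\sigma,\ee}\geq 0$, any Hahn decomposition $(P,N)$ of $\mu$ is also one of $\mu_1$, so $\mu_1^-$ is carried by $N\cap\{u^f_{\sigma,\ee}>0\}\subseteq N\cap\{f\geq 1-\ee\}$ and may sit entirely inside $U_1$. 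Concretely, for $\mu=-\delta_t$ and $f\equiv 1$ one has $\mu_1^-=\delta_t$ of norm $1$ while $|\mu|(\{-1+\sigma\leq f\leq 1-\sigma\})=0$, so your intermediate inequality fails (the conclusion of (iii) survives only because $(1-\mu(f))/\sigma=2/\sigma>1$ there). For (ii), even if one completes your chain --- $\mu_1(f)+\mu_2(f)\geq 1-2(1-\mu(f))/\sigma$ together with $\mu_i(f)\leq\|\mu_i^+\|+\|\mu_i^-\|$ and (iii) --- the outcome is only $\|\mu_1^+\|+\|\mu_2^-\|\geq 1-3(1-\mu(f))/\sigma$, which is strictly weaker than the stated bound.

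The missing idea is to intersect the level sets with the Hahn decomposition of $\mu$: put $C:=(U_1\cap P)\cup(V_1\cap N)$. On $C\cap P$ the density $u^f_{\sigma,\ee}$ is identically $1$ and $\mu\geq 0$, so $\mu_1^+(C)=\mu(C\cap P)=|\mu|(C\cap P)$ exactly; symmetrically $\mu_2^-(C)=|\mu|(C\cap N)$. Hence $\|\mu_1^+\|+\|\mu_2^-\|\geq|\mu|(C)$, and the same integration argument as in your (iv) (applied to $K\setminus C$ rather than $K\setminus(U_1\cup V_1)$) gives $|\mu|(C)\geq 1-(1-\mu(f))/\sigma$, which is (ii) with no loss of constant. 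Item (iii) then follows by subtraction: $\mu_1\perp\mu_2$ and $0\leq u^f_{\sigma,\ee}+v^f_{\sigma,\ee}\leq 1$ yield $\|\mu_1^+\|+\|\mu_1^-\|+\|\mu_2^+\|+\|\mu_2^-\|=\|\mu_1+\mu_2\|\leq\|\mu\|\leq 1$, and subtracting (ii) gives $\|\mu_1^-\|+\|\mu_2^+\|\leq(1-\mu(f))/\sigma$. (A direct proof of (iii) is also possible, but it must locate $\mu_1^-$ on $N\cap\{f\geq 1-\ee\}$ and exploit $1+f\geq\sigma$ there, not on $\{|f|\leq 1-\sigma\}$.)
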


\begin{proof}
Write $\mu_1:=\mu_{\sigma,\ee}^{f,1}$ and $\mu_2:=\mu_{\sigma,\ee}^{f,2}$. Let $(P,N)$
be a Hahn decomposition of~$\mu$ and define 
$$
	C:=\big(\{f\geq 1-\sigma\}\cap P\big) \cup \big(\{f\leq -1+\sigma\}\cap N\big).
$$
We claim that $|\mu|(C) \geq 1+(1-\mu(f))/\sigma$. Indeed, we have
\begin{equation}\label{ineqC}
	|\mu|(C) \geq \int_{C} f \dmu = \int_K f\dmu -\int_{K\setminus C} f \dmu 
	= \mu(f)-\int_{K \setminus C} f \dmu.
\end{equation}
Since 
\begin{align*}
	\int\limits_{K \setminus C} f \dmu&=
	\int\limits_{\{f<1-\sigma\}\cap P} f \dmu+
	\int\limits_{\{f>-1+\sigma\}\cap N} f \dmu= \\ &=
	\int\limits_{\{f<1-\sigma\}\cap P} f \ {\rm d}|\mu| +
	\int\limits_{\{f>-1+\sigma\}\cap N} (-f) \ {\rm d}|\mu| \leq (1-\sigma)|\mu|(K\setminus C),
\end{align*}
from~\eqref{ineqC} it follows that
$$
	|\mu|(C) \geq \mu(f)-(1-\sigma)(|\mu|(K)-|\mu|(C)) \geq 
	\mu(f)-(1-\sigma)(1-|\mu|(C)),
$$
which implies that $|\mu|(C)\geq 1-(1-\mu(f))/\sigma$, as claimed. 

(ii). Observe that $(P,N)$ is also a Hahn decomposition of $\mu_1$ and~$\mu_2$
(bear in mind that $u_{\sigma,\ee}^{f}\geq 0$ and $v_{\sigma,\ee}^{f}\geq 0$) and that
$C\cap P \subseteq \{f\geq 1-\sigma\}$ and $C\cap N \subseteq \{f\leq -1+\sigma\}$. Hence
$$
	\mu_1^+(C)=\mu_1(C\cap P)=
	\int_{C\cap P} u_{\sigma,\ee}^{f} \dmu=\mu(C\cap P)=|\mu|(C\cap P),
$$
$$
	\mu_2^-(C)=-\mu_2(C\cap N)=
	-\int_{C\cap N} v_{\sigma,\ee}^{f} \dmu=-\mu(C\cap N)=|\mu|(C\cap N),
$$
and therefore $\mu_1^+(C) + \mu^-_2(C) = |\mu|(C)$. We deduce that
$$
	\|\mu_1^+\|+\|\mu^-_2\| \geq \|\mu_1^+ + \mu^-_2\| \geq
	(\mu_1^+ + \mu^-_2)(C) = |\mu|(C) \geq 1-(1-\mu(f))/\sigma.
$$

(i) and (iii). Since $0 \leq u_{\sigma,\ee}^f+v_{\sigma,\ee}^f \leq 1$, we have
$\|\mu_1+\mu_2\| \leq \|\mu\|$. On the other hand, the equality $\|\mu_1+\mu_2\|=\|\mu_1\|+\|\mu_2\|$ holds because 
$\mu_1 \perp \mu_2$. Hence
\begin{multline*}
	1 \geq \|\mu\| \geq \|\mu_1\|+\|\mu_2\| = \\ = 
	\|\mu_1^+\|+\|\mu_1^-\|+\|\mu_2^+\|+\|\mu_2^-\|
	\stackrel{{\rm (i)}}{\geq} 1-(1-\mu(f))/\sigma+\|\mu_1^-\|+\|\mu_2^+\|,
\end{multline*}
which implies that $\|\mu_1^-\|+\|\mu_2^+\| \leq (1-\mu(f))/\sigma$.

(iv). Write $h:=1-u_{\sigma,\ee}^f-v_{\sigma,\ee}^f \in C(K)$, so that
$(\mu-\mu_1-\mu_2)(g)=\int_K g h \dmu$ for all $g\in C(K)$.
Since $0 \leq h \leq 1$ and $h$ vanishes on $C$, we get
$$
	\|\mu-\mu_1-\mu_2\|
	\leq |\mu|(K \setminus C) \leq 1- |\mu|(C) \leq (1-\mu(f))/\sigma,		
$$
which finishes the proof.
\end{proof}

\begin{lem}\label{lBPB-C(K)}
Suppose that $K$ admits local compensation. Let 
$f\in B_{C(K)} \setminus \{{\bf 0}\}$ and take  $1-\|f\|<\ee<1$. Then there exists
$f_0\in S_{C(K)}$ such that for every $F\in W(K)$ there is a $\weak^*$-continuous
function $\mathcal{P}_F\colon F^{-1}(\pi_2(f,\ee^2/6)) \to \pi_2(f_0)$ such that:
\begin{enumerate}
\item $\pi_2(f)\subseteq\pi_2(f_0)$ and $\norm{f-f_0}\leq\ee$;
\item $\norm{\mathcal{P}_F(t)-F(t)}\leq\ee$ for every $t\in F^{-1}(\pi_2(f,\ee^2/6))$. 
\end{enumerate}
\end{lem}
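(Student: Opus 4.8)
The plan is to first build, out of $f$ and $\ee$ alone, a single function $f_0\in S_{C(K)}$ by ``rounding'' the values of $f$ lying near $\pm1$ up to exactly $\pm1$; then, for a prescribed $F\in W(K)$, to split $F$ into a ``near $+1$'' part and a ``near $-1$'' part, to pass $\weak^*$-continuously to their positive parts by invoking the local compensation hypothesis, and to let $\mathcal{P}_F(t)$ be the normalized difference of the two resulting positive measures. Write $\delta:=\ee^2/6$. Since $1-\|f\|<\ee$, I would start by fixing reals $0<\sigma'<\sigma<\ee$ with $\sigma>1-\|f\|$ and $5\delta/\sigma'\le\ee$---for instance $\sigma':=\tfrac56\ee$ and any $\sigma\in(\max\{1-\|f\|,\tfrac56\ee\},\ee)$, which is a non-empty interval because $1-\|f\|<\ee$. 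Let $\phi\colon\R\to[-1,1]$ be the continuous, odd, non-decreasing, piecewise-affine map with $\phi\equiv1$ on $[1-\sigma,\infty)$, $\phi(s)=s$ for $-1+\ee\le s\le 1-\ee$, $\phi\equiv-1$ on $(-\infty,-1+\sigma]$, and set $f_0:=\phi\circ f$. Then $\|f-f_0\|\le\sigma<\ee$, and since $\sigma>1-\|f\|$ forces $\max f\ge1-\sigma$ or $\min f\le-1+\sigma$, $f_0$ attains the value $1$ or $-1$, so $f_0\in S_{C(K)}$. For part~(i): any $\mu\in\pi_2(f)$ must satisfy $\|f\|=\|\mu\|=\mu(f)=1$, so by Lemma~\ref{lemmapi} the measure $|\mu|$ is carried by $(\{f=1\}\cap P)\cup(\{f=-1\}\cap N)$ for a Hahn decomposition $(P,N)$ of~$\mu$; as $f_0\equiv1$ on $\{f=1\}$ and $f_0\equiv-1$ on $\{f=-1\}$, Lemma~\ref{lemmapi} applied to $f_0$ then yields $\mu(f_0)=1$, i.e.\ $\mu\in\pi_2(f_0)$. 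This $f_0$, which depends only on $f$ and $\ee$, will be used for every~$F$.

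Now fix $F\in W(K)$ and put $u:=u_{\sigma',\sigma}^{f}$, $v:=v_{\sigma',\sigma}^{f}$; the essential point of the choice $\sigma'<\sigma$ is that $\{u>0\}\subseteq\{f>1-\sigma\}\subseteq\{f_0=1\}$ and $\{v>0\}\subseteq\{f<-1+\sigma\}\subseteq\{f_0=-1\}$. Define $F_i\colon K\to B_{\dualC}$ by $F_i(t):=(F(t))_{\sigma',\sigma}^{f,i}$ for $i=1,2$; by the Remark following that definition and Lemma~\ref{basiclemma}(i), both $F_1$ and $-F_2$ belong to $W(K)$. Using that $K$ admits local compensation, choose $G_1\in W(K)$ a compensation of $F_1$ and $G_2\in W(K)$ a compensation of $-F_2$. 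Then for each $t$ one has $0\le G_1(t)\le F_1(t)^+$ with $G_1(t)$ carried by $\{f_0=1\}$ (because $F_1(t)^+$ is carried by $\{u>0\}\subseteq\{f_0=1\}$), $0\le G_2(t)\le F_2(t)^-$ with $G_2(t)$ carried by $\{f_0=-1\}$, and $G_1(t)\perp G_2(t)$; moreover $c(t):=G_1(t)(K)+G_2(t)(K)=\|G_1(t)\|+\|G_2(t)\|$ is continuous on $K$ with $c(t)\le\|F(t)\|\le1$. On the closed (hence compact) set $D_F:=F^{-1}(\pi_2(f,\delta))$ I would then put
\[
 \mathcal{P}_F(t):=\frac{G_1(t)-G_2(t)}{c(t)}.
\]

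To see this works, fix $t\in D_F$, write $\mu:=F(t)$ (so $\mu(f)\ge1-\delta$) and $\mu_i:=F_i(t)$, and apply Lemma~\ref{basiclemma}: $\|\mu_1^+\|+\|\mu_2^-\|\ge1-\delta/\sigma'$, $\|\mu_1^-\|+\|\mu_2^+\|\le\delta/\sigma'$ and $\|\mu-\mu_1-\mu_2\|\le\delta/\sigma'$. Since $\|G_1(t)\|=\max\{\mu_1(K),0\}\ge\mu_1(K)$ and $\|G_2(t)\|=\max\{-\mu_2(K),0\}\ge-\mu_2(K)$, it follows that
\[
 c(t)\ \ge\ \mu_1(K)-\mu_2(K)\ =\ \bigl(\|\mu_1^+\|+\|\mu_2^-\|\bigr)-\bigl(\|\mu_1^-\|+\|\mu_2^+\|\bigr)\ \ge\ 1-2\delta/\sigma'\ >\ 0 .
\]
As $D_F$ is compact and $c$ continuous, $c$ is bounded below on $D_F$ by a positive constant, so $\mathcal{P}_F$ is well defined and $\weak^*$-continuous on $D_F$. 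Because $G_1(t)\perp G_2(t)$ with $G_1(t)\ge0$ on $\{f_0=1\}$ and $G_2(t)\ge0$ on $\{f_0=-1\}$, we get $\|G_1(t)-G_2(t)\|=c(t)=(G_1(t)-G_2(t))(f_0)$, hence $\|\mathcal{P}_F(t)\|=1$ and $\mathcal{P}_F(t)(f_0)=1$, i.e.\ $\mathcal{P}_F(t)\in\pi_2(f_0)$. Lastly, $\|\mathcal{P}_F(t)-F(t)\|$ is estimated by passing through $G_1(t)-G_2(t)$, then $\mu_1^+-\mu_2^-$, then $\mu_1+\mu_2$: the first step costs $|1-c(t)|\le2\delta/\sigma'$; the second costs $\|F_1(t)^+-G_1(t)\|+\|F_2(t)^--G_2(t)\|\le\|\mu_1^-\|+\|\mu_2^+\|\le\delta/\sigma'$, using the bound $\|\rho^+-\lambda\|\le\|\rho^-\|$ for a compensation $\lambda$ of $\rho$, which is contained in Lemma~\ref{simplifynorm}; the third is $\|\mu_1^--\mu_2^+\|\le\delta/\sigma'$; and the fourth is $\le\delta/\sigma'$ by Lemma~\ref{basiclemma}(iv). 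Summing, $\|\mathcal{P}_F(t)-F(t)\|\le5\delta/\sigma'\le\ee$, which is~(ii).

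I expect the main obstacle to be the passage from the classical one-functional statement to this parametric one. The obvious candidate $\mu\mapsto(\mu_1^+-\mu_2^-)/\|\mu_1^+-\mu_2^-\|$ does norm $f_0$, but it is neither $\weak^*$-continuous (the map $\mu\mapsto\mu^+$ is not) nor, a priori, bounded away from $0$; one has to realize that the right objects to feed into the local-compensation hypothesis are $F_1=u_{\sigma',\sigma}^{f}\cdot F$ and $-F_2=-v_{\sigma',\sigma}^{f}\cdot F$, and that the quantitative inequalities in Lemma~\ref{basiclemma} are precisely what keeps the normalizing factor $c(t)$ above $1-2\delta/\sigma'>0$ on $D_F$, so that the division by $c(t)$ is both licit and $\weak^*$-continuous. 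A secondary but genuine point is the two-scale choice $\sigma'<\sigma$, needed so that $f_0\equiv\pm1$ on the sets $\{f\ge1-\sigma\}$, $\{f\le-1+\sigma\}$ that strictly contain $\{u>0\}$, $\{v>0\}$, guaranteeing that $G_1(t)$, $G_2(t)$ are carried by $\{f_0=\pm1\}$.
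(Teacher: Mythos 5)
Your proposal is correct and follows essentially the same route as the paper: the same truncated measures $F_1=u\cdot F$, $F_2=v\cdot F$ are fed into the local-compensation hypothesis, and $\mathcal{P}_F$ is the difference of the two compensations normalized by $c(t)=\|G_1(t)\|+\|G_2(t)\|$, with the identical quantitative bounds from Lemma~\ref{basiclemma} keeping $c(t)\geq 1-2\ee/5>0$. The only (harmless) differences are cosmetic: you build $f_0=\phi\circ f$ with a two-scale choice $\sigma'<\sigma<\ee$ and round up on the smaller set $\{f\geq 1-\sigma\}$, whereas the paper rounds up on $\{f\geq 1-\ee\}$ and pushes the correction outward to $\{1-\delta\leq f\leq 1-\ee\}$ with $\delta>\ee$; and your four-term telescoping of the final estimate regroups the paper's three-term one.
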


\begin{proof} 
We divide the proof into several steps.

\emph{Step 1.} Fix $\ee<\delta<1$. Note that $K$ is the union
of the following closed sets:
$$
	A:=\{f\geq 1-\ee\}, \quad
	B:=\{f\leq -1+\ee\}, \quad
	C:=\{-1+\delta \leq f\leq 1-\delta\},
$$
$$
	D:=\{1-\delta \leq f \leq 1-\ee\} \cup \{-1+\ee \leq f \leq -1+\delta\}.
$$
By Tietze extension theorem, there is a continuous function
$g\colon D \to [-\ee,\ee]$ such that
$$
	g|_{\{f=1-\ee\}}\equiv \ee, \quad
	g|_{\{f=1-\delta\}}\equiv 0, \quad
	g|_{\{f=-1+\ee\}}\equiv -\ee, \quad
	g|_{\{f=-1+\delta\}}\equiv 0.
$$
Now, we can define $f_0\in B_{C(K)}$ by declaring
$$
	f_0(t):=
	\begin{cases}
	1 & \text{if $t\in A$,}\\
	-1 & \text{if $t\in B$,}\\
	f(t) & \text{if $t\in C$,}\\
	f(t)+g(t) & \text{if $t\in D$.}
	\end{cases}
$$
It is straightforward that $\norm{f-f_0}\leq\ee$. Note also
that $A\cup B\neq \emptyset$ (because $\|f\|>1-\ee$) and so $\|f_0\|=1$.
To prove that $\pi_2(f) \subseteq \pi_2(f_0)$, suppose that
$\|f\|=1$, fix any $\mu \in \pi_2(f)$ and take a Hahn decomposition $(P,N)$ of~$\mu$. 
By Lemma~\ref{lemmapi} we have
$$
	|\mu|\big((\{f=1\}\cap P) \cup (\{f=-1\}\cap N)\big)=1.
$$
Since $\{f=1\} \subseteq \{f_0=1\}$ and $\{f=-1\} \subseteq \{f_0=-1\}$, another appeal
to Lemma~\ref{lemmapi} yields $\mu \in \pi_2(f_0)$.

\emph{Step 2.} Fix $F\in W(K)$. Set $\sigma:=5\ee/6$ and consider $F_1,F_2 \in W(K)$ defined by
$$
	F_1(t):=(F(t))_{\sigma,\ee}^{f,1}\quad\text{and}\quad
	F_2(t):=(F(t))_{\sigma,\ee}^{f,2}.
$$
Define now a $\weak^*$-continuous function $\mathcal{Q}\colon K \to \dualC$ by the formula
$$
	\mathcal{Q}(t):=\xi_{1}(t)-\xi_{2}(t),
$$
where $\xi_{1},\xi_{2} \in W(K)$ are compensations of $F_{1}$ and $-F_{2}$, respectively.

For every $t\in K$ we have 
$$
	\supp(\xi_{1}(t))\subseteq \supp(F_{1}(t))\subseteq A,\quad
	\supp(\xi_{2}(t))\subseteq \supp(-F_{2}(t))\subseteq B,
$$  
and $A\cap B=\emptyset$, hence $F_{1}(t) \perp F_{2}(t)$ and
$\xi_{1}(t) \perp \xi_{2}(t)$, and therefore
\begin{equation}
\begin{split}\label{lnormQ}
	1\geq \|F(t)\| \stackrel{(*)}{\geq} & 
	\norm{F_{1}(t)+F_{2}(t)} =\norm{F_{1}(t)}+\norm{F_{2}(t)} \geq \\ 
	\geq & \norm{\xi_{1}(t)}+\norm{\xi_{2}(t)}=
	\|\mathcal{Q}(t)\|=\xi_{1}(t)(K)+\xi_{2}(t)(K)
	\geq \\ \geq & F_{1}(t)(K)-F_{2}(t)(K).
\end{split}
\end{equation}
(inequality~$(*)$ was established in the proof of Lemma~\ref{basiclemma}(iii)).
It follows that 
\begin{equation}\label{lnormQe}
\begin{split}
	\langle \mathcal{Q}(t),f_0 \rangle &=
	\int_A f_0 \ {\rm d}\xi_{1}(t) - \int_B f_0 \ {\rm d}\xi_{2}(t)
					=\xi_{1}(t)(A) + \xi_{2}(t)(B) =\\
					&=\xi_{1}(t)(K) + \xi_{2}(t)(K)
					\stackrel{\eqref{lnormQ}}{=} \|\mathcal{Q}(t)\|.
\end{split}
\end{equation}
The $\weak^*$-continuity of~$\mathcal{Q}$ and~\eqref{lnormQe} imply that
the map $t \mapsto \|\mathcal{Q}(t)\|$ is continuous.

\emph{Step 3.} Fix $t\in K_0:=F^{-1}(\pi_2(f,\epsilon^2/6))$.
By Lemmas~\ref{simplifynorm} and~\ref{basiclemma}(iii), we have
\begin{multline}\label{laux1}
	\norm{\mathcal{Q}(t)-(F_1(t)+F_2(t))} \leq 
	\norm{\xi_{1}(t)-F_{1}(t)}
	+\norm{\xi_{2}(t)-(-F_{2}(t))} \leq \\ \leq
		2\big(\norm{(F_{1}(t))^-}+\norm{(-F_{2}(t))^-}\big)= 
		2\big(\norm{(F_{1}(t))^-}+\norm{(F_{2}(t))^+}\big) \leq \\
	\leq
	2\frac{(1-\la F(t),f \ra)}{\sigma} \stackrel{t\in K_0}{\leq}
	\frac{2\ee}{5}.
\end{multline}
On the other hand, by~\eqref{lnormQ} and Lemma~\ref{basiclemma}(ii)-(iii), we get
\begin{equation*}
\begin{split}
\norm{\mathcal{Q}(t)} &\geq F_{1}(t)(K)-F_{2}(t)(K)= \\  
	&=  \big(\norm{(F_1(t))^+}+\norm{(F_2(t))^-}\big)
	-\big(\norm{(F_{1}(t))^-}+ \norm{(F_{2}(t))^+}\big) \geq \\
	&\geq 1-2\frac{(1-\la F(t),f \ra)}{\sigma} \stackrel{t\in K_0}{\geq}  1-\frac{2\ee}{5}.
\end{split}
\end{equation*}
Hence $\mathcal{Q}(t)\neq 0$ and
\begin{equation}\label{laux2}
	\norm{\frac{\mathcal{Q}(t)}{\|\mathcal{Q}(t)\|}-\mathcal{Q}(t)}=
	1-\norm{\mathcal{Q}(t)} \leq \frac{2\ee}{5}
\end{equation}
(bear in mind that $\|\mathcal{Q}(t)\|\leq 1$, as shown in~\eqref{lnormQ}).
But Lemma~\ref{basiclemma}(iv) also yields
\begin{equation}\label{l3rdeq}
	\norm{F(t)-(F_{1}(t)+F_{2}(t))}\leq \frac{1-\la F(t),f\ra}{\sigma} \stackrel{t\in K_0}{\leq} \frac{\ee}{5}.
\end{equation}
Using \eqref{laux1}, \eqref{laux2} and \eqref{l3rdeq} we conclude that
\begin{multline*}
	\norm{\frac{\mathcal{Q}(t)}{\|\mathcal{Q}(t)\|}-F(t)} \leq\ \\ 
	 \leq \norm{\frac{\mathcal{Q}(t)}{\|\mathcal{Q}(t)\|}-\mathcal{Q}(t)}+
	\norm{\mathcal{Q}(t)-(F_{1}(t)+F_{2}(t))}
							+\norm{(F_{1}(t)+F_{2}(t))-F(t)} \leq \\ 
		\leq \frac{2\ee}{5}+\frac{2\ee}{5}+\frac{\ee}{5}= \ee.
\end{multline*}

\emph{Step 4.} The previous step makes clear that the function
$$
	\mathcal{P}_F\colon K_0 \to \dualC, \quad
	\mathcal{P}_F(t):=\frac{\mathcal{Q}(t)}{\|\mathcal{Q}(t)\|},
$$
is well-defined and satisfies $\|\mathcal{P}_F(t)-F(t)\|\leq \ee$ for every $t\in K_0$.
Note that \eqref{lnormQe} says that $\mathcal{P}_F(t) \in \pi_2(f_0)$ for every $t\in K_0$.
Since $\mathcal{Q}$ is $\weak^*$-continuous and the map $t \mapsto \|\mathcal{Q}(t)\|$
is continuous ({\em Step~2}), $\mathcal{P}_F$ is $\weak^*$-continuous as well.
The proof is over.
\end{proof}

The following particular case of the classical Bishop-Phelps-Bollob\'{a}s theorem
will be needed in the proof of Theorem~\ref{lBPBP-C(K)}.

\begin{cor}\label{lcoro-BPB-C(K)}
Suppose that $K$ admits local compensation. 
Let $(f,\mu)\in B_{C(K)}\times B_{\dualC}$ such that 
$\mu(f)\geq 1-\ee^2/6$, where $0<\ee <1$. Then 
there is $(f_0,\mu_0)\in \Pi(C(K))$ such that $\norm{f-f_0}\leq\ee$ and ${\norm{\mu-\mu_0}\leq\ee}$.
\end{cor}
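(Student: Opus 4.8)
The statement of Corollary~\ref{lcoro-BPB-C(K)} is the ``parametric'' Bishop--Phelps--Bollob\'as theorem for functionals on $C(K)$ specialized to the trivial (constant) parameter space, so the plan is simply to extract it from Lemma~\ref{lBPB-C(K)} by feeding in a one-point situation. First I would dispose of the degenerate case: if $f = \mathbf{0}$, then $\mu(f) = 0$, which contradicts $\mu(f) \geq 1 - \ee^2/6 > 0$ (since $\ee < 1$ forces $1 - \ee^2/6 > 1/2$), so in fact $f \neq \mathbf{0}$; moreover $\|f\| \geq \mu(f) \geq 1 - \ee^2/6 > 1 - \ee$, so the hypothesis $1 - \|f\| < \ee < 1$ of Lemma~\ref{lBPB-C(K)} is satisfied and I may apply it to obtain the function $f_0 \in S_{C(K)}$.

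Next I would manufacture an element $F \in W(K)$ that ``points at'' $\mu$ at enough points. The natural choice is the constant function $F \equiv \mu$ (a fortiori $\weak^*$-continuous, with values in $B_{\dualC}$ since $\mu \in B_{\dualC}$). Because $\mu(f) = \la F(t), f\ra \geq 1 - \ee^2/6$ for every $t \in K$, we have $F^{-1}(\pi_2(f, \ee^2/6)) = K$, which is nonempty. Lemma~\ref{lBPB-C(K)} then supplies a $\weak^*$-continuous map $\mathcal{P}_F \colon K \to \pi_2(f_0)$ with $\|\mathcal{P}_F(t) - \mu\| \leq \ee$ for all $t \in K$. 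Picking any point $t \in K$ and setting $\mu_0 := \mathcal{P}_F(t)$, I get $\mu_0 \in \pi_2(f_0)$, i.e. $\mu_0 \in B_{\dualC}$ with $\mu_0(f_0) = 1$; combined with $\|f_0\| = 1$ this means $\|\mu_0\| \geq \mu_0(f_0) = 1$, hence $\|\mu_0\| = 1$ and so $(f_0, \mu_0) \in \Pi(C(K))$. Finally, conclusion~(i) of Lemma~\ref{lBPB-C(K)} gives $\|f - f_0\| \leq \ee$, and conclusion~(ii) (which is exactly the bound on $\mathcal{P}_F$) gives $\|\mu - \mu_0\| \leq \ee$, completing the proof.

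There is essentially no obstacle here; the whole point is that Lemma~\ref{lBPB-C(K)} was designed with precisely this application in mind, and the ``hard work'' (constructing $f_0$ and the compensation-based map $\mathcal{Q}$) has already been done. The only things to be careful about are the elementary arithmetic check that $\ee < 1$ makes $1 - \ee^2/6 > 1 - \ee$ so the lemma is applicable, and the observation that membership in $\pi_2(f_0)$ already forces norm one, so no separate normalization of $\mu_0$ is needed.
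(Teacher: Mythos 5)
Your proof is correct and is essentially identical to the paper's own one-line argument: apply Lemma~\ref{lBPB-C(K)} to $f$ and the constant function $F\equiv\mu$, note that $F^{-1}(\pi_2(f,\ee^2/6))=K$, and take $\mu_0\in\mathcal{P}_F(K)$. The extra checks you supply (that $f\neq\mathbf{0}$, that $1-\|f\|<\ee$, and that membership in $\pi_2(f_0)$ forces $\|\mu_0\|=1$) are exactly the routine verifications the paper leaves implicit.
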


\begin{proof}
Apply Lemma~\ref{lBPB-C(K)} to $f$ and the constant function $F\in W(K)$ given by
$F(t):=\mu$ for all $t\in K$, so that $F^{-1}(\pi_2(f,\ee^2/6))=K$. Then we can take any $\mu_0\in \mathcal{P}_F(K)$.
\end{proof}

\begin{rem}\label{lrem:Hahn}
In the situation of Lemma~\ref{lBPB-C(K)}, let $t\in F^{-1}(\pi_2(f,\ee^2/6))$. Then:
\begin{enumerate}
\item Every Hahn decomposition of~$F(t)$ is also a Hahn decomposition of~$\mathcal{P}_F(t)$. 
\item $\mathcal{P}_F(t) \ll F(t)$.
\end{enumerate}
\end{rem}

\begin{proof}
(i) Let $(P,N)$ be a Hahn decomposition of~$F(t)$. As we pointed out in the proof of Lemma~\ref{basiclemma}(ii), 
$(P,N)$ is a Hahn decomposition of both $F_{1}(t)$ and $F_{2}(t)$. We claim that 
for every Borel set $B \subseteq P$ we have
$\xi_{2}(t)(B)=0$.
Indeed, this is obvious if $F_{2}(t)(K)\geq 0$, while if $F_{2}(t)(K)< 0$ then 
$$
	0 \leq \xi_{2}(t)(B) \leq (-F_{2}(t))^+(B)=
	(F_{2}(t))^-(B)=F_{2}(t)(B\cap N)=0.
$$
Hence $\mathcal{Q}(t)(B)=\xi_{1}(t)(B)\geq 0$ for every Borel set $B \subseteq P$.
In the same way, we have $\mathcal{Q}(t)(B)=-\xi_{2}(t)(B)\leq 0$ for every Borel set $B \subseteq N$.

(ii) Obviously, $F_{1}(t) \ll F(t)$ and $F_{2}(t) \ll F(t)$. 
By the very definition of compensation, we also have $\xi_{1}(t)\ll F_{1}(t)$ and
$\xi_{2}(t) \ll F_{2}(t)$. Therefore $\mathcal{Q}(t) \ll F(t)$. 
\end{proof}

We are now ready to prove the main result of this section.

\begin{proof}[Proof of Theorem~\ref{lBPBP-C(K)}]
We shall check that if $K$ admits local compensation,
then $C(K)$ fulfills the requirements of Definition~\ref{defi:BPBpNR} with $\delta(\epsilon)=(\epsilon/6)^4$.
Let $T\in \mathfrak{L}(C(K))$ with $\nu(T)=1$ and $(f,\mu)\in \Pi(C(K))$ such that 
$\la \mu,T(f) \ra \geq 1-(\epsilon/6)^4$, where $0<\eps<1$.

\emph{Step 1.} By Corollary~\ref{lcoro-BPB-C(K)} applied to $(T(f),\mu)\in B_{C(K)}\times S_{\dualC}$
and $\delta:=\ee^2/7$ (note that $\la \mu,T(f)\ra\geq 1-\delta^2/6$), there is
$(g,\mu_0)\in\Pi(C(K))$ such that $\|T(f)-g\|\leq \delta$ and $\norm{\mu-\mu_0}\leq\delta<\ee$. 
Let $(P,N)$ be a Hahn decomposition of~$\mu$, which in turn is also a Hahn decomposition of~$\mu_0$ 
(see Remark~\ref{lrem:Hahn}(i)). Since $\mu(f)=1$, an appeal to Lemma~\ref{lemmapi} yields
$$
	|\mu|\big(K \setminus (\{f=1\}\cap P)\cup(\{f=-1\}\cap N)\big)=0.
$$
The fact that $\mu_0\ll \mu$ (see Remark~\ref{lrem:Hahn}(ii)) implies 
$$
	|\mu_0|\big(K \setminus (\{f=1\}\cap P)\cup(\{f=-1\}\cap N)\big)=0
$$
and so $\mu_0(f)=1$ (again, by Lemma~\ref{lemmapi}). Writing 
$$
	D_1:=\{T(f) \geq 1-\delta\} \quad \mbox{and} \quad
	D_2:=\{T(f) \leq -1+\delta\}, 
$$
the proof of Lemma~\ref{lBPB-C(K)} shows that
$\supp(\mu_0) \subseteq D_1\cup D_2$ and that 
$\mu_0(B)\geq 0$ (resp. $\mu_0(B)\leq 0$) for every Borel set $B \subseteq D_1$
(resp. $B \subseteq D_2$). Hence
\begin{equation}\label{lDs}
\mu_0(D_1)-\mu_0(D_2)=|\mu_0|(D_1 \cup D_2)=\norm{\mu_0}=1.
\end{equation}

\emph{Step 2.} Let us consider the closed sets
\begin{equation*}
\begin{split}
	A_1&:=\{T(f) \geq 1-\ee^2/6\} \supseteq D_1,\\
	A_2&:=\{T(f) \leq -1+\ee^2/6\} \supseteq D_2,\\
	C&:=\{-1+\ee^2/6 \leq T(f) \leq 1-\ee^2/6\}. 
\end{split}
\end{equation*}
Since $D_1\cap (C\cup A_2)=\emptyset=D_2 \cap (C \cup A_1)=\emptyset$, we can apply Tietze extension theorem to find
two continuous functions $g_1\colon K \to [0,1]$ and $g_2\colon K \to [-1,0]$ such that 
$$
		g_1|_{D_1}\equiv 1, \quad
		g_1|_{C\cup A_2}\equiv 0, \quad
		g_2|_{D_2}\equiv -1, \quad
		g_2|_{C\cup A_1}\equiv 0.
$$

\emph{Step 3.} Let $F,G\in W(K)$ be defined by $F(t):=T^*(\delta_t)=\delta_t\circ T$ and $G(t):=-F(t)$. 
It is clear that $F(A_1)\cup G(A_2) \subseteq \pi_2(f,\ee^2/6)$. By Lemma~\ref{lBPB-C(K)} there is $f_0\in S_{C(K)}$ such that 
$\pi_2(f) \subseteq \pi_2(f_0)$, $\norm{f-f_0}\leq\ee$ and there exist two $\weak^*$-continuous mappings 
$$
	\mathcal{P}_F\colon A_1 \to \pi_2(f_0) \quad\text{and}\quad
	\mathcal{P}_G\colon A_2 \to \pi_2(f_0)
$$ 
satisfying
\begin{equation}\label{lcontrolP}
	\sup_{t\in A_1}\norm{\mathcal{P}_F(t)-F(t)}\leq\ee
	\quad \text{and}\quad \sup_{t\in A_2}\norm{\mathcal{P}_{G}(t)+F(t)}\leq\ee.
\end{equation}
 
Now, we can define a $\weak^*$-continuous mapping $\widetilde{F}\colon K\to \dualC$ as follows:
$$
	\widetilde{F}(t):=
	\begin{cases}
	F(t) + g_1(t) \big(\mathcal{P}_F(t)-F(t)\big) & \text{if $t \in A_1$},\\
	F(t) + g_2(t) \big(\mathcal{P}_{G}(t)+F(t)\big) & \text{if $t \in A_2$},\\
	F(t) & \text{if $t \in C$}.
	\end{cases}
$$
Define $T_0\in\mathfrak{L}(C(K))$ by $T_0(h)(t):=\la \widetilde F(t),h \ra$ for every $h\in C(K)$ and $t\in K$. 
We shall check that $T_0$ satisfies the required properties.

\emph{Step 4.} Note that $\widetilde{F}(t)$ (resp. $-\widetilde{F}(t)$) is a convex combination
of $F(t)$ and $\mathcal{P}_F(t)$ (resp. $-F(t)$ and $\mathcal{P}_{G}(t)$) for every
$t\in A_1$ (resp. $t\in A_2$). Since $F(K) \subseteq B_{\dualC}$ 
and $\mathcal{P}_F(A_1)\cup\mathcal{P}_{G}(A_2)\subseteq \pi_2(f_0) \subseteq B_{\dualC}$, we deduce 
$\widetilde{F}(K) \subseteq B_{\dualC}$, which implies that
$$
	\norm{T_0}=
	\sup_{h\in B_{C(K)}}\norm{T_0(h)}=
	\sup_{h\in B_{C(K)}} \sup_{t\in K}|\la \widetilde F(t),h \ra|\leq 1.
$$
On the other hand,
$$
	\norm{T_0-T}=
	\sup_{h\in B_{C(K)}} \sup_{t\in K}|\la \widetilde{F}(t)-F(t),h \ra| 
	\leq \sup_{t\in K} \big\|\widetilde{F}(t)-F(t)\big\|\stackrel{\eqref{lcontrolP}}{\leq}\ee.
$$

Since $(f,\mu_0)\in\Pi(C(K))$ (as shown in {\em Step~1}) and $\pi_2(f)\subseteq\pi_2(f_0)$, we deduce that 
$(f_0,\mu_0)\in\Pi(C(K))$. 
Since $g_1|_{D_1}\equiv 1$, $g_2|_{D_2}\equiv -1$ and $\mathcal{P}_F(A_1)\cup\mathcal{P}_G(A_2) \subseteq \pi_2(f_0)$, we have
$$
	T_0(f_0)(t)=
	\begin{cases}
	\la \mathcal{P}_F(t),f_0 \ra =1 &\text{if $t\in D_1$,}\\
	- \la \mathcal{P}_{G}(t),f_0 \ra  =-1 &\text{if $t\in D_2$.}\\
	\end{cases}
$$
Bearing in mind that $\supp(\mu_0) \subseteq D_1\cup D_2$ (as pointed out in {\em Step~1}),
it follows that
\begin{multline*}
	\la \mu_0, T_0(f_0) \ra=\int_{D_1\cup D_2} T_0(f_0) \, d\mu_0 = \\ =
	\int_{D_1} T_0(f_0) \, d\mu_0+\int_{D_2} T_0(f_0) \, d\mu_0=
	\mu_0(D_1)-\mu_0(D_2)\stackrel{\eqref{lDs}}{=}1.
\end{multline*}
In particular, this implies that $\nu(T_0)=1$. The proof is over.
\end{proof}

\section{Existence of compensation functions for metric compacta}\label{section3}

This section is devoted to proving that every compact metric space $K$ admits local compensation.
Actually, we shall show that a stronger property holds true, namely, that 
every $F\in W(K)$ admits a compensation of the form $\xi\circ F$, where 
$\xi\colon \dualC \to \dualC$ is a function (depending only on~$K$) as in the following definition:

\begin{defi}\label{defi:MCompensation}
Let $K$ be a compact space and $M \subseteq \dualC$. We say that $\xi\colon M \to \dualC$ is an \emph{$M$-compensation function} if 
it is $\weak^*$-$\weak^*$-continuous and $\xi(\mu)$ is a compensation of~$\mu$ 
for every $\mu\in M$; if in addition $M=\dualC$, we say that $\xi$ is a {\em compensation function}.
\end{defi}

Thus, in this section our goal is to prove the following:

\begin{theo}\label{theo:CompactMetric}
Every compact metric space admits a compensation function.
\end{theo}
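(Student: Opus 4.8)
The plan is to follow the strategy announced in the introduction: first establish the result for the Cantor set $\cantor = 2^{\mathbb N}$ by an explicit construction (this is Theorem~\ref{theo:Cantor}), and then transfer it to an arbitrary compact metric space $K$ via a regular averaging operator, using Lemma~\ref{pro:QuotientCompensation}. The point is that every compact metric space is a continuous image of the Cantor set, and for such surjections $\varphi\colon \cantor \to K$ one has a regular averaging operator, i.e. a norm-one operator $u\colon C(\cantor) \to C(K)$ with $u(\mathbf 1)=\mathbf 1$ and $u(g\circ\varphi)=g$ for $g\in C(K)$; its adjoint $u^*\colon \dualC \to \mathcal M(\cantor)$ embeds measures on $K$ as measures on $\cantor$ in a way compatible with sign, total mass, and $\weak^*$-continuity, which is exactly the structure a compensation function must respect.

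First I would record the Cantor case. Writing $\tcantor$ for the finite binary sequences and $[s]\subseteq\cantor$ for the basic clopen set determined by $s\in\tcantor$, the idea is to peel mass off the positive part of $\mu$ in a canonical, lexicographically--ordered way so that precisely $\mu(\cantor)$ total mass remains when $\mu(\cantor)>0$, and nothing remains otherwise. Concretely, for $\mu\in\dualC$ with $\mu(\cantor)>0$ one processes the clopen pieces $[s]$ in order of increasing length (and, within a level, in lexicographic order), greedily keeping the $\mu^+$--mass of a piece if the total kept so far stays below $\mu(\cantor)$ and truncating the first piece that would overshoot; the truncation on that piece is then defined by recursing into its two children. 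This produces $\xi(\mu)$ with $0\le\xi(\mu)\le\mu^+$ and $\xi(\mu)(\cantor)=\mu(\cantor)$, and one sets $\xi(\mu)=0$ when $\mu(\cantor)\le 0$. The $\weak^*$--continuity of $\xi$ is the delicate part and must be checked by hand: a net $\mu_\alpha\to\mu$ converges in particular on every clopen set $[s]$ and on $\mathbf 1$, and one argues that the greedy bookkeeping depends continuously on these finitely--many numbers at each stage, taking care at the boundary case where $\mu$ assigns mass exactly equal to a running threshold — here the truncation point may jump, but the corresponding measures still converge weak$^*$ because the jump happens on a set of small $\mu^+$--mass. I expect this continuity argument at the greedy ``boundary'' to be the main obstacle, and it is presumably handled carefully in the proof of Theorem~\ref{theo:Cantor}.

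Next I would invoke Lemma~\ref{pro:QuotientCompensation} to push the Cantor compensation function forward. Given a continuous surjection $\varphi\colon\cantor\to K$ (which exists since $K$ is compact metric) and a regular averaging operator $u\colon C(\cantor)\to C(K)$ along $\varphi$, let $\xi_{\cantor}$ be the compensation function on $\cantor$ from Theorem~\ref{theo:Cantor}. Define
$$
\xi_K := u^* \circ \xi_{\cantor} \circ (u^*\!\restriction_{\dualC})' \,,
$$
more precisely: given $\mu\in\dualC$, lift it to $\tilde\mu := u^*\mu \in \mathcal M(\cantor)$, apply $\xi_{\cantor}$, and then push the result back down to $K$ by the natural projection dual to $g\mapsto g\circ\varphi$. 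One checks that $u^*\mu$ has the same total mass and the same sign of total mass as $\mu$ (because $u(\mathbf 1)=\mathbf 1$ and $\|u\|=1$ forces $u^*$ to preserve positivity and $\mathbf 1$-mass), and that the downward projection sends a measure $0\le\nu\le(u^*\mu)^+$ to a measure $0\le\nu'\le\mu^+$ with $\nu'(K)=\nu(\cantor)$; combined with the weak$^*$--weak$^*$ continuity of adjoint maps, this yields that $\xi_K$ is a compensation function on $K$. The remaining verification that the algebraic inequalities $0\le\xi_K(\mu)\le\mu^+$ and $\xi_K(\mu)(K)=\mu(K)$ (when $\mu(K)>0$) survive the two transfer steps is routine and is exactly the content of Lemma~\ref{pro:QuotientCompensation}, so I would cite it rather than reprove it. This completes the proof that every compact metric space admits a compensation function, and in particular, by Definition~\ref{defi:MCompensation}, admits local compensation, so Theorem~\ref{lBPBP-C(K)} then gives the BPB property for numerical radius for $C(K)$.
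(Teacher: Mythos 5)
Your reduction is exactly the paper's: Pe\l czy\'nski's theorem produces a continuous surjection $\varphi\colon\cantor\to K$ with a regular averaging operator, and Lemma~\ref{pro:QuotientCompensation} (whose formula $\widetilde\xi=C_\varphi^*\circ\xi\circ u^*$ is what you describe) transfers a compensation function from $\cantor$ to $K$. The problem is the Cantor step, which is the real content of the theorem, and there your sketched construction does not work. A greedy, lexicographically ordered peeling of $\mu^+$-mass fails $\weak^*$-continuity for two reasons. First, the quantities you feed into the bookkeeping, namely $\mu^+(N_\sigma)$, are not $\weak^*$-continuous functions of $\mu$ (on $[0,1]$, $\delta_{1/n}-\delta_0\to 0$ weak$^*$ while the positive parts converge to $\delta_0$); only the signed values $\mu(N_\sigma)$ are continuous, since $\mathbbm{1}_{N_\sigma}\in C(\cantor)$. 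Second, and more decisively, the lexicographic tie-breaking is discontinuous even where the inputs are continuous: take $x<_{\mathrm{lex}}y$ in $\cantor$ and $\mu_n=\delta_x+\delta_y-\delta_{z_n}$ with $z_n\to x$, $z_n\neq x$. Every $\mu_n$ has total mass $1$ and $\mu_n^+=\delta_x+\delta_y$, so your greedy rule keeps the lex-first unit of mass and outputs $\delta_x$ for all $n$; but $\mu_n\to\delta_y$ weak$^*$, whose only compensation is $\delta_y$. The jump here is \emph{not} supported on a set of small $\mu^+$-mass, so the heuristic you offer to repair continuity at the truncation boundary is false.

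The paper's construction of $\xi$ on $\cantor$ is designed precisely to dodge both obstacles: it never looks at $\mu^+$, only at the continuous data $m_\sigma=\mu(N_\sigma)$, and instead of a greedy ordered choice it performs, level by level and from the leaves up, a symmetric pairwise cancellation via the antisymmetric function $d(s_1,s_2)$, redistributing the cancellation \emph{proportionally} over all descendants (the multiplicative factors $1\pm d(s_{n,\tau,0},s_{n,\tau,1})/s_{n,\tau,i}$). Proportionality is what makes the limit measure well defined and independent of any ordering, and the inequality $|\mm{k}{\sigma}(\mu)|\leq|\mm{k-1}{\sigma}(\mu)|$ is what rescues continuity at the degenerate points where a denominator vanishes. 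So the transfer half of your argument is fine, but the Cantor half needs to be replaced by a construction of this symmetric, order-free type; as written, the proposal has a genuine gap.
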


\begin{cor}
If $K$ is a compact metric space, then $C(K)$ has the BPB property for numerical radius.
\end{cor}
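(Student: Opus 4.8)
The plan is to reduce Theorem~\ref{theo:CompactMetric} to the case of the Cantor set~$\mathcal{C}$ and then transport the compensation function along a regular averaging operator. First I would recall (or establish separately, as Theorem~\ref{theo:Cantor} in the excerpt promises) that $\mathcal{C}$ admits a compensation function $\xi_{\mathcal{C}}\colon \dualC[\mathcal{C}]\to\dualC[\mathcal{C}]$; this is the genuinely constructive part and I would take it as given here. Next, given an arbitrary compact metric space~$K$, I would use the classical fact (Milutin's theorem, or at least its easy direction for metric compacta) that there is a continuous surjection $q\colon \mathcal{C}\to K$ admitting a \emph{regular averaging operator}: a norm-one operator $u\colon C(\mathcal{C})\to C(K)$ with $u(\mathbf{1})=\mathbf{1}$, $u(h)\geq 0$ whenever $h\geq 0$, and $u(h\circ q)=h$ for all $h\in C(K)$. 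Equivalently, on the dual side there is a $\weak^*$-$\weak^*$-continuous, positive, affine map $j=u^*\colon \dualC[K]\to\dualC[\mathcal{C}]$ such that $q_\#\circ j=\mathrm{id}$, where $q_\#\colon\dualC[\mathcal{C}]\to\dualC[K]$ is the push-forward; since $u$ is unital and positive, $j$ preserves total mass on positive measures and maps $\mathcal{M}^+(K)$ into $\mathcal{M}^+(\mathcal{C})$.

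The core of the argument is then the formula
\[
	\xi_K := q_\# \circ \xi_{\mathcal{C}} \circ j \colon \dualC[K]\to\dualC[K].
\]
I would check the three defining properties of Definition~\ref{defi:MCompensation}. Continuity: each of $j$, $\xi_{\mathcal{C}}$, $q_\#$ is $\weak^*$-$\weak^*$-continuous (the outer two because they are adjoints of bounded operators $u$ and $q^*\colon C(K)\to C(\mathcal{C})$, the middle one by hypothesis), so $\xi_K$ is too. The compensation properties: fix $\mu\in\dualC[K]$ and set $\tilde\mu:=j(\mu)$. If $\mu(K)\leq 0$ I need $\xi_K(\mu)=0$; for this I would arrange (as part of the statement of Theorem~\ref{theo:Cantor}, which is available) or directly verify that $\tilde\mu(\mathcal{C})=\mu(K)\leq 0$, using that $j$ is the adjoint of a unital operator, whence $\xi_{\mathcal{C}}(\tilde\mu)=0$ and so $\xi_K(\mu)=0$. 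If $\mu(K)>0$, then again $\tilde\mu(\mathcal{C})=\mu(K)>0$, so $\nu:=\xi_{\mathcal{C}}(\tilde\mu)$ satisfies $0\leq\nu\leq\tilde\mu^+$ and $\nu(\mathcal{C})=\tilde\mu(\mathcal{C})=\mu(K)$. Pushing forward, $\xi_K(\mu)=q_\#\nu\geq 0$ and $q_\#\nu(K)=\nu(\mathcal{C})=\mu(K)$, which gives the mass condition. The one inequality that needs a small argument is $q_\#\nu\leq\mu^+$. Here I would use positivity of $j=u^*$: for any Borel $B\subseteq K$, $(q_\#\nu)(B)=\nu(q^{-1}(B))\leq\tilde\mu^+(q^{-1}(B))$, and then I would bound $\tilde\mu^+(q^{-1}(B))$ by $\mu^+(B)$ using that $j$ maps the positive part controllably — concretely, $\tilde\mu^+\leq j(\mu^+)$ because $j$ is positive and affine, so $\tilde\mu^+(q^{-1}(B))\leq j(\mu^+)(q^{-1}(B))=(q_\#\circ j)(\mu^+)(B)=\mu^+(B)$, using $q_\#\circ j=\mathrm{id}$. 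That chain delivers $0\leq q_\#\nu\leq\mu^+$, completing the verification.

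This is essentially the content of Lemma~\ref{pro:QuotientCompensation} in the excerpt (``compensation functions can be transferred via regular averaging operators''), so in the write-up I would simply invoke that lemma together with Theorem~\ref{theo:Cantor} and the existence of a regular averaging operator $C(\mathcal{C})\to C(K)$ for every metric compactum~$K$ — the latter being Milutin's classical theorem, for which I would cite Pe{\l}czy\'nski's monograph. The Corollary then follows at once: a compact metric space admits a compensation function by Theorem~\ref{theo:CompactMetric}, hence a fortiori admits local compensation (every $F\in W(K)$ admits the compensation $\xi_K\circ F$, which lies in $W(K)$ by continuity of $\xi_K$ and boundedness from Lemma~\ref{simplifynorm}), and therefore $C(K)$ has the BPB property for numerical radius by Theorem~\ref{lBPBP-C(K)}.

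The main obstacle is not in this transfer step, which is soft, but in the input Theorem~\ref{theo:Cantor}: building an explicit $\weak^*$-continuous selection $\mu\mapsto\nu$ on all of $\dualC[\mathcal{C}]$ that cuts each positive part down to the exact total mass $\mu(\mathcal{C})$ while varying continuously is delicate, because the naive normalization $\nu=\frac{\mu(\mathcal{C})}{\mu^+(\mathcal{C})}\mu^+$ from Remark~\ref{rem:ExistenceSingle} is \emph{not} weak$^*$-continuous (neither $\mu\mapsto\mu^+$ nor $\mu\mapsto\mu^+(\mathcal{C})$ is), so one must exploit the tree structure of $2^{<\mathbb{N}}$ to redistribute mass level by level in a continuous fashion. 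I would expect the proof of Theorem~\ref{theo:CompactMetric} itself, modulo that Cantor-set construction, to be short.
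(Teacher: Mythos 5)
Your argument is correct and follows the paper's route exactly: the corollary is obtained by combining Theorem~\ref{lBPBP-C(K)} with Theorem~\ref{theo:CompactMetric}, and the latter is proved, just as you describe, by transferring the Cantor-set compensation function of Theorem~\ref{theo:Cantor} along a regular averaging operator via Lemma~\ref{pro:QuotientCompensation} (the paper cites Pe\l czy\'nski, rather than Milutin, for the existence of such operators onto arbitrary metric compacta). Your verification of the transferred map's properties, including the inequality $q_\#\nu\leq\mu^+$ via positivity of $u^*$ and $q_\#\circ u^*=\mathrm{id}$, matches the paper's proof of that lemma.
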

\begin{proof}
Combine Theorems~\ref{lBPBP-C(K)} and~\ref{theo:CompactMetric}.
\end{proof}

\begin{cor}\label{cor:LEMITA}
Let $T$ be a topological space, $K$ a compact space and $F\colon T \to \dualC$
a $\weak^*$-continuous function. Suppose there is a compact metrizable set
$L \subseteq K$ such that $\supp(F(t)) \subseteq L$ for every $t\in T$.
Then there is a $w^*$-continuous function $G\colon T \to \dualC$ such that
$G(t)$ is a compensation of~$F(t)$ for every $t\in T$.
\end{cor}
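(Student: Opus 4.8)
The plan is to reduce the problem to the compact metric case (Theorem~\ref{theo:CompactMetric}) by viewing $F$ as taking values in the measures supported on $L$. First I would observe that $\mathcal{M}(L)$ embeds isometrically into $\mathcal{M}(K)$ as the subspace $\{\nu \in \mathcal{M}(K) \colon \supp(\nu) \subseteq L\}$, via the map $r$ sending $\nu \in \mathcal{M}(L)$ to its zero-extension $\hat\nu \in \mathcal{M}(K)$ (equivalently, $\hat\nu(B) = \nu(B \cap L)$ for Borel $B \subseteq K$); dually this is the adjoint of the restriction operator $C(K) \to C(L)$, $h \mapsto h|_L$, which is a quotient map by Tietze. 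The key point is that $r$ is a $w^*$-to-$w^*$ homeomorphism onto its image: indeed $\la r(\nu), h\ra = \la \nu, h|_L\ra$ for every $h \in C(K)$, and surjectivity of $h \mapsto h|_L$ gives the homeomorphism onto the image. Moreover $r$ preserves all the relevant structure — positive part, negative part, variation, total mass — because extension by zero does: $(r(\nu))^{\pm} = r(\nu^{\pm})$, $|r(\nu)| = r(|\nu|)$, and $r(\nu)(K) = \nu(L)$. Consequently $\theta$ is a compensation of $\nu$ in $\mathcal{M}(L)$ if and only if $r(\theta)$ is a compensation of $r(\nu)$ in $\mathcal{M}(K)$.

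With this in hand, the argument is short. Since $L$ is compact metrizable, Theorem~\ref{theo:CompactMetric} provides a compensation function $\xi \colon \mathcal{M}(L) \to \mathcal{M}(L)$, i.e. a $w^*$-$w^*$-continuous map with $\xi(\theta)$ a compensation of $\theta$ for every $\theta$. Next I define $\widetilde F \colon T \to \mathcal{M}(L)$ by $\widetilde F(t) := r^{-1}(F(t))$; this makes sense because $\supp(F(t)) \subseteq L$ forces $F(t) \in r(\mathcal{M}(L))$, and $F(t) \in B_{\mathcal{M}(K)}$ gives $\|\widetilde F(t)\|_{\mathcal{M}(L)} \le 1$. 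The map $\widetilde F$ is $w^*$-continuous: $F$ is $w^*$-continuous into $r(\mathcal{M}(L))$ and $r^{-1}$ is a $w^*$-homeomorphism there. Then set
\[
	G(t) := r\big(\xi(\widetilde F(t))\big) \in \mathcal{M}(K) \qquad (t \in T).
\]
This $G$ is $w^*$-continuous as a composition of $w^*$-continuous maps $\widetilde F$, $\xi$, and $r$, and it has values in $B_{\mathcal{M}(K)}$ by Lemma~\ref{simplifynorm} together with $\|r\| = 1$. Finally, for each $t$, since $\xi(\widetilde F(t))$ is a compensation of $\widetilde F(t)$ in $\mathcal{M}(L)$, the structure-preservation of $r$ noted above shows that $G(t) = r(\xi(\widetilde F(t)))$ is a compensation of $r(\widetilde F(t)) = F(t)$ in $\mathcal{M}(K)$. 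This is exactly what is claimed.

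The only delicate point — and the one I would write out carefully — is the claim that $r$ intertwines the compensation conditions, which amounts to checking that extension by zero commutes with the Jordan decomposition and with evaluation of total mass on the whole space. Concretely: if $(P,N)$ is a Hahn decomposition of $\theta$ in $\mathcal{M}(L)$ then $(P \cup (K\setminus L), N)$ is a Hahn decomposition of $r(\theta)$, whence $(r(\theta))^+ = r(\theta^+)$ and $(r(\theta))^- = r(\theta^-)$; and $r(\theta)(K) = r(\theta)(L) = \theta(L)$. From these identities the two bullet points of Definition~\ref{defi:compensation}(i) transfer verbatim between $\theta,\xi(\theta)$ in $\mathcal{M}(L)$ and $r(\theta), r(\xi(\theta))$ in $\mathcal{M}(K)$. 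Everything else — the $w^*$-homeomorphism property of $r$ on $r(\mathcal{M}(L))$, and the norm bounds — is routine. Note that no metrizability of $K$ or of $T$ is used; only $L$ needs to be metrizable, which is what makes the corollary useful in the non-metrizable setting of Section~\ref{section4}.
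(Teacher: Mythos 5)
Your proof is correct and is essentially the paper's own argument: the composition $G = r\circ\xi\circ r^{-1}\circ F$ is exactly the paper's $V^*\circ\xi\circ U\circ F$, where $U$ is restriction of measures to $L$ and $V^*$ is your zero-extension map $r$, and your justification of the $\weak^*$-continuity of $r^{-1}\circ F$ via Tietze and of the transfer of the compensation conditions under $r$ is precisely what the paper leaves implicit. (The aside that $F$ takes values in $B_{\dualC}$ is not part of the hypothesis, but nothing in your argument actually uses it.)
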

\begin{proof}
According to Theorem~\ref{theo:CompactMetric}, $L$ admits a compensation function $\xi\colon \mathcal{M}(L) \to \mathcal{M}(L)$.
Let $U\colon \dualC \to \mathcal{M}(L)$ and $V\colon C(K) \to C(L)$
be the restriction operators. Since $\supp(F(t)) \subseteq L$ for every $t\in T$,
 the composition $U\circ F$ is $\weak^*$-continuous. It is now clear that 
$G:=V^*\circ \xi \circ U\circ F$ satisfies the required properties.
\end{proof}

In order to prove Theorem~\ref{theo:CompactMetric} we need some previous work.
Given a continuous onto mapping $\varphi\colon K \to L$ between compact spaces, let 
$C_\varphi\colon C(L)\to C(K)$ be the operator
defined by $C_\varphi(f):=f\circ \varphi$ for every $f\in C(L)$.
An operator $u\colon C(K)\to C(L)$ is called a \emph{regular averaging operator} for~$\varphi$ provided that 
$u$ is positive, $u(\mathbbm{1}_K)=\mathbbm{1}_L$ and $u\circ C_\varphi=id_{C(L)}$.

\begin{lem}\label{pro:QuotientCompensation}
Let $K$ and $L$ be compact spaces for which there is
a continuous onto mapping $\varphi\colon K \to L$ with a regular averaging operator.
If $K$ admits a compensation function, then $L$ admits a compensation function as well.
\end{lem}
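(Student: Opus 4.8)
The plan is to push a compensation function $\xi$ on $K$ forward to $L$ using the regular averaging operator $u\colon C(K)\to C(L)$ and the embedding $C_\varphi\colon C(L)\to C(K)$ at the level of measures. The dual operators give us a way to move between $\mathcal{M}(L)$ and $\mathcal{M}(K)$: first $C_\varphi^*\colon \mathcal{M}(K)\to\mathcal{M}(L)$, which is weak$^*$-to-weak$^*$ continuous and corresponds to the pushforward of measures along $\varphi$; and second $u^*\colon \mathcal{M}(L)\to\mathcal{M}(K)$, which is also weak$^*$-to-weak$^*$ continuous, is positive (because $u$ is positive), and satisfies $C_\varphi^*\circ u^* = (u\circ C_\varphi)^* = id_{\mathcal{M}(L)}$. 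I would therefore define the candidate compensation function on $L$ by
$$
	\eta := C_\varphi^* \circ \xi \circ u^* \colon \mathcal{M}(L) \to \mathcal{M}(L).
$$
It is weak$^*$-weak$^*$-continuous as a composition of three such maps, so the only thing to verify is that $\eta(\mu)$ is a compensation of $\mu$ for every $\mu\in\mathcal{M}(L)$.

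Fix $\mu\in\mathcal{M}(L)$ and write $\tilde\mu:=u^*(\mu)\in\mathcal{M}(K)$ and $\nu:=\xi(\tilde\mu)$, so $\nu$ is a compensation of $\tilde\mu$ and $\eta(\mu)=C_\varphi^*(\nu)$. The first key observation is that $u^*$ preserves total mass: since $u(\mathbbm{1}_K)=\mathbbm{1}_L$,
$$
	\tilde\mu(K)=\la u^*(\mu),\mathbbm{1}_K\ra=\la\mu,u(\mathbbm{1}_K)\ra=\la\mu,\mathbbm{1}_L\ra=\mu(L),
$$
and likewise $C_\varphi^*(\nu)(L)=\la\nu,C_\varphi(\mathbbm{1}_L)\ra=\la\nu,\mathbbm{1}_K\ra=\nu(K)$, because $C_\varphi(\mathbbm{1}_L)=\mathbbm{1}_K$. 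So if $\mu(L)>0$, then $\tilde\mu(K)=\mu(L)>0$, hence $\nu(K)=\tilde\mu(K)$ by the definition of compensation, and therefore $\eta(\mu)(L)=\nu(K)=\tilde\mu(K)=\mu(L)$, as required. If $\mu(L)\le 0$, then $\tilde\mu(K)=\mu(L)\le 0$, so $\nu=0$ and hence $\eta(\mu)=C_\varphi^*(0)=0$. It remains to check the condition $0\le\eta(\mu)\le\mu^+$ in the case $\mu(L)>0$. Positivity is immediate: $\nu\ge 0$ and $C_\varphi^*$ is positive (it is the pushforward along $\varphi$), so $\eta(\mu)=C_\varphi^*(\nu)\ge 0$.

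The main obstacle is the inequality $\eta(\mu)\le\mu^+$, i.e. $C_\varphi^*(\nu)\le\mu^+$. Here I would argue as follows. We know $0\le\nu\le\tilde\mu^+$, so it suffices to show $C_\varphi^*(\tilde\mu^+)\le\mu^+$, equivalently $C_\varphi^*(u^*(\mu)^+)\le\mu^+$. The point is that $u^*$ is not merely positive but, being the dual of a positive unital operator, behaves well with respect to order decompositions; concretely, for any $\mu\in\mathcal{M}(L)$ one has $u^*(\mu)^+\le u^*(\mu^+)$ (because $u^*(\mu)\le u^*(\mu^+)$ and $u^*(\mu^+)\ge 0$, so $u^*(\mu^+)$ dominates $u^*(\mu)$ and hence its positive part). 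Applying the positive map $C_\varphi^*$ and using $C_\varphi^*\circ u^* = id$ then gives
$$
	C_\varphi^*\big(u^*(\mu)^+\big)\le C_\varphi^*\big(u^*(\mu^+)\big)=\mu^+,
$$
which is exactly what we need. Chaining $C_\varphi^*(\nu)\le C_\varphi^*(\tilde\mu^+)=C_\varphi^*(u^*(\mu)^+)\le\mu^+$ completes the verification that $\eta(\mu)$ is a compensation of $\mu$, and hence $\eta$ is a compensation function for $L$.
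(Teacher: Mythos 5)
Your proposal is correct and follows essentially the same route as the paper: the same map $C_\varphi^*\circ\xi\circ u^*$, the same mass computations via $u(\mathbbm{1}_K)=\mathbbm{1}_L$ and $C_\varphi(\mathbbm{1}_L)=\mathbbm{1}_K$, and the same key inequality $(u^*(\mu))^+\leq u^*(\mu^+)$ combined with $C_\varphi^*\circ u^*=id$. The only cosmetic difference is that you argue at the level of measure orderings while the paper tests against non-negative continuous functions.
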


\begin{proof}
Let $\xi\colon\mathcal{M}(K)\to\mathcal{M}(K)$ be a compensation function and 
$u\colon C(K)\to C(L)$ a regular averaging operator for~$\varphi$. 
Define
$$
	\widetilde{\xi}\colon\mathcal{M}(L)\to\mathcal{M}(L), \quad 
	\widetilde{\xi}:=C_\varphi^{*}\circ \xi \circ u^*.
$$
Clearly, $\widetilde{\xi}$ is $\omega^*$-$\omega^*$-continuous. Fix $\mu \in \mathcal{M}(L)$. Since
$C_\varphi$ is a positive operator, so is $C_\varphi^*$ and therefore
$\widetilde{\xi}(\mu) \geq 0$. Since
$$
	u^*(\mu)(K)=\la u^*(\mu), \mathbbm{1}_K \ra= \la \mu, u(\mathbbm{1}_K) \ra=\la \mu,\mathbbm{1}_L \ra= \mu(L)
$$ 
and
$$
	\widetilde{\xi}(\mu)(L)=\langle \widetilde{\xi}(\mu),\mathbbm{1}_L \rangle=
	\la \xi(u^*(\mu)), C_\varphi(\mathbbm{1}_L) \ra=
	\la \xi(u^*(\mu)), \mathbbm{1}_K \ra= \xi(u^*(\mu))(K),
$$
we deduce that $\widetilde{\xi}(\mu)=0$ if $\mu(L)\leq 0$
and $\widetilde{\xi}(\mu)(L)=\mu(L)$ if $\mu(L)>0$. For every non-negative $f\in C(K)$ we have
\begin{multline*}
	\big\la \widetilde{\xi}(\mu), f \big\ra=
	\big\la \xi(u^*(\mu)), C_\varphi(f) \big\ra \leq
	\big\la (u^*(\mu))^+, C_\varphi(f) \big\ra  \leq 
	\big\la u^*(\mu^+), C_\varphi(f) \big\ra = \\ = \big\la \mu^+, u(C_\varphi(f)) \big\ra = \big\la \mu^+, f \big\ra,
\end{multline*}
because $C_\varphi$ and $u^*$ are positive operators and $u\circ C_\varphi=id_{C(L)}$.
Hence $\widetilde{\xi}(\mu) \leq \mu^+$.
It follows that $\widetilde{\xi}$ is a compensation function.
\end{proof}

From now on we write $\cantor:=2^\N=\{0,1\}^{\N}$ to denote the Cantor set.
Pe\l czynski proved that a compact space $L$ is metrizable if, and only if, 
there is a continuous onto mapping $\varphi\colon\mathcal{C}\to L$ with a regular averaging operator,
\cite[Theorem~5.6]{pel-J}.
This result and Lemma~\ref{pro:QuotientCompensation} show that Theorem~\ref{theo:CompactMetric}
can be deduced from the following particular case:

\begin{theo}\label{theo:Cantor}
The Cantor set $\cantor$ admits a compensation function.
\end{theo}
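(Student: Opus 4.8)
The goal is to construct a $\weak^*$-$\weak^*$-continuous map $\xi\colon\mathcal{M}(\cantor)\to\mathcal{M}(\cantor)$ sending each $\mu$ to a compensation of itself. The natural idea is to use the dyadic tree structure of $\cantor$. For each finite binary string $s\in\tcantor$, let $[s]\subseteq\cantor$ be the corresponding clopen basic cylinder, and write $s0$, $s1$ for its two immediate successors, so $[s]=[s0]\sqcup[s1]$. Given $\mu\in\mathcal{M}(\cantor)$ one has the values $\mu([s])$ for all $s$, and these determine $\mu$ because the clopen sets generate the Borel $\sigma$-algebra and $\mu$ is regular. The plan is to define $\xi(\mu)$ by prescribing its values on all cylinders $\{[s]:s\in\tcantor\}$ in a coherent (finitely additive along the tree, hence countably additive by compactness) way, starting from the root and pushing mass down the tree so that at each node we never exceed the available positive part $\mu^+$ of the restricted measure, while keeping the total equal to $\max\{\mu(\cantor),0\}$.

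\textbf{Main construction.} First handle the trivial case: if $\mu(\cantor)\le 0$ set $\xi(\mu)=0$. Otherwise set $\xi(\mu)(\cantor)=\mu(\cantor)>0$ at the root. Proceeding by induction on the length of $s$, suppose $\xi(\mu)([s])=:a_s\in[0,\mu^+([s])]$ has been defined with $a_s\ge 0$; we must split $a_s=a_{s0}+a_{s1}$ with $0\le a_{si}\le\mu^+([si])$. Such a split exists precisely because $a_s\le\mu^+([s])=\mu^+([s0])+\mu^+([s1])$. To make the choice canonical \emph{and} continuous in $\mu$, the cleanest recipe is the ``greedy/proportional'' one: if $\mu^+([s])>0$, put $a_{si}:=a_s\cdot\dfrac{\mu^+([si])}{\mu^+([s])}$; if $\mu^+([s])=0$ then necessarily $a_s=0$ and we set $a_{si}:=0$. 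One checks $0\le a_{si}\le\mu^+([si])$ and $a_{s0}+a_{si}=a_s$. This defines a nonnegative, finitely additive function on the algebra of clopen sets, which by compactness of $\cantor$ (every clopen cover has a finite subcover, so there are no nontrivial decreasing sequences of nonempty clopens with empty intersection failing the continuity-from-above needed for Carath\'eodory) extends to a Borel measure $\xi(\mu)\ge 0$ with $\xi(\mu)(\cantor)=\mu(\cantor)$. Finally, for any Borel set $B$, writing $B$ as an increasing union of finite unions of cylinders and using $a_s\le\mu^+([s])$ on each cylinder gives $\xi(\mu)\le\mu^+$; in particular $0\le\xi(\mu)\le\mu^+$, so $\xi(\mu)$ is a compensation of $\mu$.

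\textbf{Continuity and the main obstacle.} It remains to verify that $\xi$ is $\weak^*$-$\weak^*$-continuous, i.e.\ that $\mu_\alpha\to\mu$ weak$^*$ implies $\xi(\mu_\alpha)\to\xi(\mu)$ weak$^*$; since $\{\mathbbm{1}_{[s]}:s\in\tcantor\}$ has dense linear span in $C(\cantor)$ and all measures in sight lie in a fixed ball, it suffices to show $\xi(\mu_\alpha)([s])\to\xi(\mu)([s])$ for each fixed $s$. This is where the argument must be done with care, and it is the \textbf{main obstacle}: the map $\mu\mapsto\mu^+$ is \emph{not} weak$^*$-continuous, and neither is $\mu\mapsto\mu^+([s])$ in general. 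However, what we actually need is continuity of the finitely many ratios entering the definition of $a_s$, and here the key point is that $\mu\mapsto\mu^+([s])$ \emph{is} weak$^*$-lower-semicontinuous and, more importantly, that on a zero-dimensional compact space one has $\mu^+([s])=\sup\{\mu(U):U\subseteq[s]\text{ clopen}\}$, a supremum of weak$^*$-continuous functions over a \emph{directed} family, hence one must instead work with a slightly different, genuinely continuous surrogate. The fix is to replace the proportional split by one built from the weak$^*$-continuous quantities $\mu([s])$ themselves together with the ``positive variation over clopen refinements,'' or — more simply — to define the split using only finitely many cylinder values at a controlled depth and pass to the limit; concretely, one shows that for the greedy rule the discontinuity set of each $a_s(\cdot)$ is contained in $\{\mu:\mu^+([s'])=0\text{ for some }s'\preceq s\}$, on which $a_s\equiv 0$ on a neighbourhood, so $a_s$ is in fact continuous everywhere. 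Spelling this out — proving that whenever $\mu^+([s])>0$ the ratio $\mu^+([si])/\mu^+([s])$ depends continuously on $\mu$, and handling the boundary where it vanishes — is the technical heart of the proof. One then concludes by Lemma~\ref{pro:QuotientCompensation} and Pe\l czynski's theorem that Theorem~\ref{theo:CompactMetric} follows, and the corollaries are immediate.
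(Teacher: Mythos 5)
Your overall frame --- prescribe $\xi(\mu)$ on the dyadic cylinders by splitting mass down the tree, verify coherence, and then check $\weak^*$-continuity cylinder by cylinder --- is also the frame of the paper's proof, and the reduction to Theorem~\ref{theo:CompactMetric} via Pe\l czy\'nski and Lemma~\ref{pro:QuotientCompensation} is correct. The genuine gap is in the splitting rule. The proportional rule $a_{si}:=a_s\,\mu^+(N_{si})/\mu^+(N_s)$ is \emph{not} $\weak^*$-continuous, and the repair you sketch is false: the discontinuity is not confined to the set where some $\mu^+(N_{s'})$ vanishes, and on that set $a_s$ is \emph{not} identically zero on a neighbourhood. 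Concretely, take $u\in N_1$, $t\in N_0$ and a sequence $t_n\to t$ in $N_0$ with $t_n\neq t$, and put $\mu_n:=\delta_{t_n}-\delta_t+2\delta_u$. Then $\|\mu_n\|=4$ for all $n$ and $\mu_n\to\mu:=2\delta_u$ in the $\weak^*$-topology, with $\mu_n(\cantor)=\mu(\cantor)=2>0$. Your rule at the root gives $a_{N_0}(\mu_n)=2\cdot\mu_n^+(N_0)/\mu_n^+(\cantor)=2/3$ for every $n$, whereas $a_{N_0}(\mu)=0$ because $\mu^+(N_0)=0$; so $a_{N_0}$ is discontinuous at a point of the set $\{\mu:\mu^+(N_0)=0\}$ on which it is certainly not locally zero. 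Adding a fixed atom $\delta_v$ with $v\in N_0$ shows the ratio is already discontinuous at measures with $\mu^+(N_0)>0$ ($1/2$ along the sequence versus $1/3$ at the limit). The root cause is that $\mu\mapsto\mu^+(N_s)$ is only $\weak^*$-lower semicontinuous, since positive and negative mass can cancel in the limit; any recipe that feeds $\mu^+$ into the split inherits this defect.

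Designing a split that is simultaneously dominated by $\mu^+$ and $\weak^*$-continuous is the actual content of the theorem, and this is exactly the step your proposal postpones (the clause ``define the split using only finitely many cylinder values at a controlled depth and pass to the limit'' is, unspecified, the whole proof). The paper never touches $\mu^+$: it works exclusively with the continuous data $m_\sigma=\mu(N_\sigma)$ and, at each level $n$ and for $k=1,\dots,n$, performs an explicit pairwise cancellation between sibling subtrees whose aggregated values $s_{n,\tau,0},s_{n,\tau,1}$ have opposite signs, using $d(s_1,s_2)={\rm sign}(s_2)\min\{|s_1|,|s_2|\}$. The resulting values $\tilde m_\sigma$ are additive along the tree, have constant sign on each level, satisfy $0\le\tilde m_\sigma\le m_\sigma$ when $m_\sigma\ge 0$ and $\tilde m_\sigma=0$ when $m_\sigma\le 0$ (which is what yields $0\le\xi(\mu)\le\mu^+$ without ever evaluating $\mu^+$), and are $\weak^*$-continuous in $\mu$, the delicate points $s_{n,\tau,0}(\mu_0)=0$ being handled by the contraction estimate $|\mm{k}{\sigma}(\mu)|\le|\mm{k-1}{\sigma}(\mu)|$. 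Without a replacement of this kind for your proportional rule, the construction does not produce a compensation function.
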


Such compensation function will be defined explicitly (Definition~\ref{defi:CompensationCantor}
and Proposition~\ref{pro:CompensationCantor}).
The rest of this section is devoted to proving Theorem~\ref{theo:Cantor}. We divide the proof
into three subsections for the convenience of the reader. We first need to introduce
some notation.

\begin{defi}
We define a continuous function $d\colon \R \times \R \to \R$ by 
$$
	d(s_1,s_2):=
	\begin{cases}
		{\rm sign}(s_2)\cdot\min\{|s_1|,|s_2|\} & \text{if $s_1\cdot s_2 <0$,}\\
		0 & \text{if $s_1\cdot s_2 \geq 0$}.
	\end{cases}
$$
\end{defi}

\begin{rem}\label{lem:d}
The function $d$ satisfies the following properties:
\begin{enumerate}
\item $d(s_1,s_2)=-d(s_2,s_1)$.
\item $0 \leq 1+d(s_1,s_2)/{s_1} \leq 1$ if $s_1 \neq 0$.
\item $0 \leq s_1+d(s_1,s_2) \leq s_1$ if $s_1\geq 0$.
\item $s_1 \leq s_1+d(s_1,s_2) \leq 0$ if $s_1\leq 0$.
\item If $s_1\cdot s_2<0$ then either $s_1+d(s_1,s_2)=0$ or $s_2-d(s_1,s_2)=0$.
\end{enumerate}
\end{rem}

As usual, we write $\tcantor$ to denote the set of all finite (maybe empty) sequences
of $0$s and $1$s. Given $\sigma=(\sigma(1),\dots,\sigma(n))\in \tcantor$, we write 
${\rm length}(\sigma)=n$ and 
$$
	\sigma|_{k}:=(\sigma(1),\dots,\sigma(k)) \in \tcantor \quad\mbox{for every }k\in \{1,\dots,n\};
$$ 
we use the convention $\sigma|_0=\emptyset$. We denote
$$
	\sigma\smallfrown 0:=(\sigma(1),\dots,\sigma(n),0) \quad\mbox{and}\quad 
	\sigma\smallfrown 1:=(\sigma(1),\dots,\sigma(n),1).
$$
More generally, if $\tau=(\tau(1),\dots,\tau(m))\in \tcantor$, we write
$$
	\sigma \smallfrown \tau :=(\sigma(1),\dots,\sigma(n),\tau(1),\dots,\tau(m)).
$$
Given any $\sigma'\in \tcantor$, the notation $\sigma \subseteq \sigma'$ means
that ${\rm length}(\sigma')\geq n$ and $\sigma'(k)=\sigma(k)$ for every $k\in \{1,\dots,n\}$.
Analogously, given any $t=(t(k))_{k\in \N}$ in the Cantor set~$\cantor$, the notation $\sigma \subseteq t$ means
that $t(k)=\sigma(k)$ for every $k\in \{1,\dots,n\}$.
Thus, the standard clopen basis for the topology of $\cantor$ consists of the sets
\[
	N_\sigma:=\{t\in\cantor\colon \, \sigma\subseteq t\}, \quad \sigma\in 2^{<\mathbb{N}}.
\]
For every $n\in \N \cup \{0\}$ we write $\cantor_n:=\{\sigma\in\tcantor\colon\,{\rm length}(\sigma)=n\}$.

\subsection{Construction}\label{subsection:construction}

Fix $\mu \in \dualC[\cantor]$ and let $m_\sigma:=\mu(N_\sigma)$ for every $\sigma\in\tcantor$. We 
next define a collection of real numbers $\{\tilde{m}_{\sigma}\colon\sigma \in \tcantor\}$ satisfying some special
properties which shall be discussed in Subsection~\ref{subsection:properties}. 

Fix $n\in \N \cup \{0\}$. In order to define the collection $\{\tilde{m}_{\sigma}\colon\sigma \in \cantor_n\}$,  
we construct certain real numbers $\{\mm{k}{\sigma}\colon\,\sigma \in \cantor_n\}$ for every $k\in \{0,\dots,n\}$. This is done 
inductively:

\begin{itemize}
\item Case $k=0$. Set $\mm{0}{\sigma}:=m_\sigma$ for every $\sigma \in \cantor_n$.

\item Case $k=1$. For each $\tau \in \cantor_{n-1}$ we set
\[\begin{split}
\mm{1}{\tau \smallfrown 0} &:= m_{\tau \smallfrown 0} + d(m_{\tau \smallfrown 0},m_{\tau \smallfrown 1}),\\
\mm{1}{\tau \smallfrown 1} &:= m_{\tau \smallfrown 1} - d(m_{\tau \smallfrown 0},m_{\tau \smallfrown 1}).
\end{split}\]

\item Assume that $k \in \{2,\dots,n\}$ and that the collection
$\{\mm{k-1}{\sigma}\colon\,\sigma \in \cantor_n\}$ is already constructed. 
Note that $\cantor_n$ is the disjoint union of the sets 
$$
	\cantor_{n,\tau}:=\{\sigma\in \cantor_{n}\colon \, \tau \subseteq \sigma\}, \quad
	\tau\in \cantor_{n-k}.
$$
Fix $\tau \in \cantor_{n-k}$. We define
\[
	s_{n,\tau,0}:=\sum_{\sigma\in \cantor_{n,\tau,0}} \mm{k-1}{\sigma}
	\quad \text{ and } 
	\quad s_{n,\tau,1}:=\sum_{\sigma\in \cantor_{n,\tau,1}} \mm{k-1}{\sigma},
\]
where
$\cantor_{n,\tau,0}:=\{\sigma\in \cantor_{n,\tau}\colon \sigma(n-k+1)=0\}$ and 
$\cantor_{n,\tau,1}:=\cantor_{n,\tau} \setminus \cantor_{n,\tau,0}$.
We now distinguish two cases:
\begin{itemize}
\item If $s_{n,\tau,0}\cdot s_{n,\tau,1} = 0$, then we set
$$
	\mm{k}{\sigma}:=\mm{k-1}{\sigma} \quad \mbox{for every }\sigma \in \cantor_{n,\tau}.
$$
\item If $s_{n,\tau,0}\cdot s_{n,\tau,1} \neq 0$, then we set
\[\begin{split}
	\mm{k}{\sigma} &:= \mm{k-1}{\sigma}\cdot \Big(1 + \frac{d(s_{n,\tau,0},s_{n,\tau,1})}{s_{n,\tau,0}}\Big)
	\quad \text{ for every } \sigma \in \cantor_{n,\tau,0},\\
	\mm{k}{\sigma} &:= \mm{k-1}{\sigma}\cdot \Big(1 - \frac{d(s_{n,\tau,0},s_{n,\tau,1})}{s_{n,\tau,1}} \Big)
	\quad \text{ for every } \sigma \in \cantor_{n,\tau,1}.
\end{split}\]
\end{itemize}
In this way, the collection $\{\mm{k}{\sigma}\colon\,\sigma \in \cantor_n\}$ is constructed.
\end{itemize}

Finally, we define $\tilde{m}_{\sigma}:=\mm{n}{\sigma}$ for every $\sigma \in \cantor_n$ and $n\in \N\cup \{0\}$.

\subsection{Properties}\label{subsection:properties}

Fix $\mu\in \dualC[\cantor]$. We follow the notations introduced in Subsection~\ref{subsection:construction}.

\begin{lem}\label{lemmaConserva}
$\tilde{m}_\sigma =\tilde{m}_{\sigma \smallfrown 0}+\tilde{m}_{\sigma \smallfrown 1}$ for every $\sigma\in\tcantor$.
\end{lem}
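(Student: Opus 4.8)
The plan is to show that the key identity $\tilde m_\sigma = \tilde m_{\sigma\smallfrown 0} + \tilde m_{\sigma\smallfrown 1}$ is already present at the level of the intermediate collections $\{\mm{k}{\cdot}\}$, and then read off the statement at the appropriate stage. Concretely, fix $n \in \N\cup\{0\}$ and $\sigma \in \cantor_n$; I want to prove $\tilde m_\sigma = \mm{n+1}{\sigma\smallfrown 0} + \mm{n+1}{\sigma\smallfrown 1}$, since by definition $\tilde m_{\sigma\smallfrown 0} = \mm{n+1}{\sigma\smallfrown 0}$ and $\tilde m_{\sigma\smallfrown 1} = \mm{n+1}{\sigma\smallfrown 1}$ (these are elements of $\cantor_{n+1}$). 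So everything reduces to understanding, for the index set $\cantor_{n+1}$, how the sums over the two halves $\cantor_{n+1,\sigma,0}$ and $\cantor_{n+1,\sigma,1}$ evolve as $k$ runs from $0$ to $n+1$.

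The heart of the matter is the following bookkeeping claim, which I would prove by induction on $k \in \{0,1,\dots,n+1\}$: for every $\tau \in \cantor_{(n+1)-k}$ one has
\[
	\sum_{\rho \in \cantor_{n+1,\tau,0}} \mm{k}{\rho} \;+\; \sum_{\rho \in \cantor_{n+1,\tau,1}} \mm{k}{\rho}
	\;=\; m_\tau,
\]
i.e.\ the partial sums of the level-$k$ numbers over the descendants of any node $\tau$ of length $(n+1)-k$ still add up to $m_\tau = \mu(N_\tau)$. For $k=0$ this is just finite additivity of~$\mu$: $\sum_{\rho\in\cantor_{n+1},\,\tau\subseteq\rho} m_\rho = m_\tau$. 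For the inductive step, at level $k$ the numbers attached to descendants of $\tau \in \cantor_{(n+1)-k}$ are obtained from the level-$(k-1)$ numbers attached to descendants of $\tau\smallfrown 0$ and $\tau\smallfrown 1$; by the induction hypothesis those already sum to $s_{n+1,\tau,0} := m_{\tau\smallfrown 0}$ and $s_{n+1,\tau,1} := m_{\tau\smallfrown 1}$ respectively (after matching notation), and then the construction rescales the first block by $\bigl(1 + d(s_0,s_1)/s_0\bigr)$ and the second by $\bigl(1 - d(s_0,s_1)/s_1\bigr)$ (or leaves both untouched when $s_0 s_1 = 0$). Hence the new total over the descendants of $\tau$ is
\[
	s_0\Bigl(1+\tfrac{d(s_0,s_1)}{s_0}\Bigr) + s_1\Bigl(1-\tfrac{d(s_0,s_1)}{s_1}\Bigr)
	= s_0 + d(s_0,s_1) + s_1 - d(s_0,s_1) = s_0 + s_1 = m_{\tau\smallfrown 0} + m_{\tau\smallfrown 1} = m_\tau,
\]
using $\mu(N_{\tau\smallfrown 0}) + \mu(N_{\tau\smallfrown 1}) = \mu(N_\tau)$; the degenerate case $s_0 s_1 = 0$ is immediate. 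Applying the claim with $k = n+1$ gives $\tau = \sigma|_0 = \emptyset$ only when $\sigma = \emptyset$, so I would instead apply the claim at level $k = n+1 - n = 1$ with $\tau = \sigma$: it yields exactly $\mm{1}{\sigma\smallfrown 0} + \mm{1}{\sigma\smallfrown 1} = m_\sigma$ at the level where these two entries are first defined, and then I must check that subsequent steps $k = 2,\dots,n+1$ of the construction preserve the \emph{individual} values $\mm{k}{\sigma\smallfrown 0}$ and $\mm{k}{\sigma\smallfrown 1}$ — but the step at stage $k$ only modifies entries indexed by $\rho \in \cantor_{n+1}$ with a fixed ancestor of length $(n+1)-k \le n - 1$, and by construction the rescaling factor multiplying $\mm{k-1}{\sigma\smallfrown 0}$ and $\mm{k-1}{\sigma\smallfrown 1}$ is the \emph{same} (it depends only on the common ancestor), so their sum is multiplied by that common factor; combining with the claim applied at each later level (which says the relevant block sum is what it should be) closes the argument.

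The main obstacle is purely notational: the construction in Subsection~\ref{subsection:construction} is phrased with $n$ as the fixed length of the target level and $\tau$ ranging over shorter sequences, whereas the statement of the lemma fixes $\sigma$ of arbitrary length; so the real work is to align indices carefully and verify that "the rescaling factor applied to two sibling entries at any stage is identical, hence sums are rescaled multiplicatively, and telescoping against $s_0 + d + s_1 - d = s_0 + s_1$ collapses everything." Once the induction on $k$ above is set up cleanly — tracking block sums $s_{n,\tau,0}, s_{n,\tau,1}$ and using properties (iii)--(iv) of Remark~\ref{lem:d} only insofar as they guarantee the rescaling factors are well-defined and the degenerate case behaves — the identity $\tilde m_\sigma = \tilde m_{\sigma\smallfrown 0} + \tilde m_{\sigma\smallfrown 1}$ drops out. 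I expect the whole proof to be a short induction with careful indexing rather than any substantive estimate.
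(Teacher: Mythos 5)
Your proposal is correct and lives in the same circle of ideas as the paper's proof, but the induction is organized around a different invariant, so a comparison is worth recording. The paper fixes $n$ and proves, by induction on the stage $k$, the sibling identity $\mm{k}{\sigma}=\mm{k+1}{\sigma \smallfrown 0}+\mm{k+1}{\sigma \smallfrown 1}$ for all $\sigma\in\cantor_n$ simultaneously; the equality of block sums $s_{n+1,\tau,i}=s_{n,\tau,i}$ needed to match the rescaling factors of the two constructions then comes for free by summing the inductive hypothesis over the relevant block. You instead carry the invariant $\sum_{\rho\in\cantor_{n+1},\,\tau\subseteq\rho}\mm{k}{\rho}=m_\tau$ for $\tau\in\cantor_{(n+1)-k}$, which identifies every block sum with $\mu(N_\tau)$ outright; this is a slightly stronger and arguably cleaner statement (it contains Lemma~\ref{lem:sumatotal} as the case $\tau=\emptyset$), and your verification of it --- the telescoping $s_0+d+s_1-d=s_0+s_1$ together with the observation that the two siblings $\sigma\smallfrown 0$ and $\sigma\smallfrown 1$ always lie in the same sub-block and hence receive the same factor --- is sound. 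The one step you leave implicit is the comparison with the left-hand side: to pass from $\mm{n+1}{\sigma \smallfrown 0}+\mm{n+1}{\sigma \smallfrown 1}=m_\sigma\cdot(\text{product of common factors})$ to $\tilde{m}_\sigma$ you must also run your bookkeeping claim for the level-$n$ construction, so that $s_{n,\tau,i}=m_{\tau\smallfrown i}=s_{n+1,\tau,i}$ and therefore the factor applied to $\mm{j}{\sigma}$ at stage $j$ of the $n$-construction coincides (including the agreement of the case distinction $s_0s_1=0$ versus $s_0s_1\neq 0$) with the common factor applied to the sibling pair at stage $j+1$ of the $(n+1)$-construction; this is exactly the content of the paper's identity $s_{n+1,\tau,i}=s_{n,\tau,i}$, and once you make it explicit your argument closes. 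What your route buys is an explicit identification of all intermediate block sums as values of $\mu$ on clopen sets, which makes Lemma~\ref{lem:sumatotal} and the level-independence of the rescaling factors transparent; what the paper's route buys is a single induction with no product formula to assemble at the end.
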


\begin{proof} Fix $n\in \N \cup \{0\}$. We shall prove that 
$$
	\mm{k}{\sigma}=\mm{k+1}{\sigma \smallfrown 0}+\mm{k+1}{\sigma \smallfrown 1}
	\quad
	\mbox{for every }\sigma \in \cantor_n \mbox{ and }k\in \{0,1,\dots,n\}
$$
by induction on~$k$. Note that for $k=0$ we have
$$
	\mm{0}{\sigma}=\mf{\sigma}=\mu(N_\sigma)=
	\mu(N_{\sigma \smallfrown 0})+\mu(N_{\sigma \smallfrown 1})=
	\mf{\sigma \smallfrown 0}+\mf{\sigma \smallfrown 1}=\mm{1}{\sigma \smallfrown 0}+\mm{1}{\sigma \smallfrown 1}
$$
for every $\sigma \in \cantor_n$, by the very definition of $\mm{1}{\sigma \smallfrown 0}$ and $\mm{1}{\sigma \smallfrown 1}$.
Suppose that $k\in \{1,\dots,n\}$ and that the inductive hypothesis holds:
\begin{equation}\label{induction}
	\mm{k-1}{\sigma'}=\mm{k}{\sigma' \smallfrown 0}+\mm{k}{\sigma' \smallfrown 1}
	\quad\mbox{for every }\sigma'\in \cantor_n.
\end{equation}
Fix $\sigma \in \cantor_n$ and let $\tau:=\sigma|_{n-k}$, so that
\begin{multline*}
	s_{n+1,\tau,0}=\sum_{\sigma'\in \cantor_{n+1,\tau,0}} \mm{k}{\sigma'}=
	\sum_{\sigma'\in \cantor_{n,\tau,0}} \mm{k}{\sigma' \smallfrown 0}+\sum_{\sigma'\in \cantor_{n,\tau,0}}\mm{k}{\sigma' \smallfrown 1} 
	=\\= \sum_{\sigma'\in \cantor_{n,\tau,0}} \big( \mm{k}{\sigma' \smallfrown 0}+\mm{k}{\sigma' \smallfrown 1} \big)
	\stackrel{\eqref{induction}}{=}
	\sum_{\sigma'\in \cantor_{n,\tau,0}} \mm{k-1}{\sigma'}=s_{n,\tau,0}.
\end{multline*}
In the same way, we have $s_{n+1,\tau,1}=s_{n,\tau,1}$. 

If $s_{n+1,\tau,0}\cdot s_{n+1,\tau,1}=0$, then 
$\mm{k+1}{\sigma \smallfrown 0}=\mm{k}{\sigma \smallfrown 0}$, $\mm{k+1}{\sigma \smallfrown 1}=\mm{k}{\sigma \smallfrown 1}$
and $\mm{k}{\sigma}=\mm{k-1}{\sigma}$, hence 
$$
	\mm{k+1}{\sigma \smallfrown 0}+\mm{k+1}{\sigma \smallfrown 1}=
	\mm{k}{\sigma \smallfrown 0}+\mm{k}{\sigma \smallfrown 1}\stackrel{\eqref{induction}}{=}\mm{k-1}{\sigma}=\mm{k}{\sigma}.
$$

If $s_{n+1,\tau,0}\cdot s_{n+1,\tau,1}\neq 0$, then
\begin{multline*}
	\mm{k+1}{\sigma \smallfrown 0}+\mm{k+1}{\sigma \smallfrown 1} = \\
	=\big(\mm{k}{\sigma \smallfrown 0}+\mm{k}{\sigma \smallfrown 1}\big) \cdot \Big(1 + (-1)^{\sigma(n-k+1)}
	\frac{d(s_{n+1,\tau,0},s_{n+1,\tau,1})}{s_{n+1,\tau,\sigma(n-k+1)}}\Big)
	=\\ \stackrel{\eqref{induction}}{=}
	\mm{k-1}{\sigma} \cdot\Big(1 + (-1)^{\sigma(n-k+1)}
	\frac{d(s_{n,\tau,0},s_{n,\tau,1})}{s_{n,\tau,\sigma(n-k+1)}}\Big)
	=\mm{k}{\sigma},
\end{multline*}
which finishes the proof.
\end{proof}

\begin{lem}\label{lem:sumatotal}
$\mu(\cantor)=\sum_{\sigma\in \cantor_n} \tilde{m}_{\sigma}$ for every $n\in \N \cup \{0\}$.
\end{lem}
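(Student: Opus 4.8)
The plan is to prove Lemma~\ref{lem:sumatotal} by induction on~$n$, using the ``conservation'' property established in Lemma~\ref{lemmaConserva} as the engine.

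For the base case $n=0$, the set $\cantor_0$ consists only of the empty sequence $\emptyset$, and by construction $\tilde{m}_{\emptyset}=\mm{0}{\emptyset}=m_\emptyset=\mu(N_\emptyset)=\mu(\cantor)$, so the claimed identity reads $\mu(\cantor)=\tilde{m}_\emptyset=\mu(\cantor)$.

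For the inductive step, suppose $\sum_{\sigma\in\cantor_n}\tilde{m}_\sigma=\mu(\cantor)$ for some $n\in\N\cup\{0\}$. The set $\cantor_{n+1}$ is the disjoint union, over $\sigma\in\cantor_n$, of the two-element sets $\{\sigma\smallfrown 0,\sigma\smallfrown 1\}$; in other words every $\sigma'\in\cantor_{n+1}$ is of the form $\sigma\smallfrown i$ for a unique $\sigma\in\cantor_n$ and $i\in\{0,1\}$. Hence
\[
	\sum_{\sigma'\in\cantor_{n+1}}\tilde{m}_{\sigma'}
	=\sum_{\sigma\in\cantor_n}\big(\tilde{m}_{\sigma\smallfrown 0}+\tilde{m}_{\sigma\smallfrown 1}\big)
	\overset{\ref{lemmaConserva}}{=}\sum_{\sigma\in\cantor_n}\tilde{m}_\sigma
	=\mu(\cantor),
\]
where the middle equality is Lemma~\ref{lemmaConserva} applied to each $\sigma\in\cantor_n$, and the last is the inductive hypothesis. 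This completes the induction and the proof.

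There is essentially no obstacle here: the statement is an immediate consequence of Lemma~\ref{lemmaConserva} together with the observation that summing $\tilde{m}_\sigma$ over all $\sigma$ of length $n+1$ can be regrouped as a sum over all $\sigma$ of length $n$ of the pair $\tilde{m}_{\sigma\smallfrown 0}+\tilde{m}_{\sigma\smallfrown 1}$. The only point worth a moment's care is the bookkeeping that $\cantor_{n+1}$ partitions cleanly in this way and that each term $\tilde{m}_{\sigma\smallfrown i}$ appears exactly once; once that is noted, telescoping down to $n=0$ (where $\tilde m_\emptyset=\mu(\cantor)$) finishes everything.
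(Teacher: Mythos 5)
Your proof is correct and is essentially identical to the paper's: both argue by induction on $n$, with the base case $\tilde{m}_\emptyset=\mu(\cantor)$ and the inductive step obtained by regrouping the sum over $\cantor_{n+1}$ into pairs $\tilde{m}_{\sigma\smallfrown 0}+\tilde{m}_{\sigma\smallfrown 1}$ and invoking Lemma~\ref{lemmaConserva}. Nothing is missing.
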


\begin{proof}
By induction on~$n$. The case $n=0$ is obvious. 
Suppose $n>0$ and the inductive hypothesis. By applying Lemma~\ref{lemmaConserva}
we get
\begin{multline*}
	\sum_{\tau\in \cantor_n} \tilde{m}_{\tau}=
	\sum_{\sigma\in \cantor_{n-1}} \tilde{m}_{\sigma \smallfrown 0}+\sum_{\sigma\in \cantor_{n-1}} \tilde{m}_{\sigma \smallfrown 1}=
	\sum_{\sigma\in \cantor_{n-1}} \big(\tilde{m}_{\sigma \smallfrown 0}+\tilde{m}_{\sigma \smallfrown 1}\big)= \\ =
	\sum_{\sigma\in \cantor_{n-1}} \tilde{m}_{\sigma}=
	\mu(\cantor),
\end{multline*}
as required.
\end{proof}

\begin{lem}\label{lem:signs}
Let $n\in \N$, $k\in \{1,\dots,n\}$ and $\tau \in \cantor_{n-k}$. Then the collection
$$
	\{\mm{k}{\tau \smallfrown \tau'}\colon \tau'\in \cantor_k\}
$$ 
has constant sign. In particular, $\{\tilde{m}_{\sigma}\colon \sigma\in \cantor_n\}$
has constant sign.
\end{lem}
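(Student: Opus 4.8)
The plan is to prove the claim by induction on $k$, proceeding downward on the ``depth'' $n-k$ or, equivalently, by checking that each stage of the construction preserves constancy of sign within each block $\cantor_{n,\tau}$. First I would record the base case $k=1$: for a fixed $\tau\in\cantor_{n-1}$, the two-element collection $\{\mm{1}{\tau\smallfrown 0},\mm{1}{\tau\smallfrown 1}\}$ equals $\{m_{\tau\smallfrown 0}+d(m_{\tau\smallfrown 0},m_{\tau\smallfrown 1}),\; m_{\tau\smallfrown 1}-d(m_{\tau\smallfrown 0},m_{\tau\smallfrown 1})\}$; by Remark~\ref{lem:d}(iii)--(iv), $m_{\tau\smallfrown 0}+d(m_{\tau\smallfrown 0},m_{\tau\smallfrown 1})$ has the same sign as $m_{\tau\smallfrown 0}$ (or is $0$) and $m_{\tau\smallfrown 1}-d(m_{\tau\smallfrown 0},m_{\tau\smallfrown 1})=m_{\tau\smallfrown 1}+d(m_{\tau\smallfrown 1},m_{\tau\smallfrown 0})$ has the same sign as $m_{\tau\smallfrown 1}$ (or is $0$), using Remark~\ref{lem:d}(i). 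If $m_{\tau\smallfrown 0}\cdot m_{\tau\smallfrown 1}\geq 0$ the two original numbers already have constant sign and $d=0$, so nothing changes; if $m_{\tau\smallfrown 0}\cdot m_{\tau\smallfrown 1}<0$, then by Remark~\ref{lem:d}(v) one of the two new numbers is $0$, so the pair trivially has constant sign.

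Next I would carry out the inductive step. Assume $k\in\{2,\dots,n\}$ and that for every $\rho\in\cantor_{n-k+1}$ the collection $\{\mm{k-1}{\rho\smallfrown\rho'}\colon\rho'\in\cantor_{k-1}\}$ has constant sign; equivalently, for each $\tau\in\cantor_{n-k}$ the sub-collections $\{\mm{k-1}{\sigma}\colon\sigma\in\cantor_{n,\tau,0}\}$ and $\{\mm{k-1}{\sigma}\colon\sigma\in\cantor_{n,\tau,1}\}$ each have constant sign (take $\rho=\tau\smallfrown 0$ and $\rho=\tau\smallfrown 1$). Fix such a $\tau$. Then $s_{n,\tau,0}=\sum_{\sigma\in\cantor_{n,\tau,0}}\mm{k-1}{\sigma}$ has the common sign of the summands (or is $0$ only if all summands vanish), and likewise for $s_{n,\tau,1}$. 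In the case $s_{n,\tau,0}\cdot s_{n,\tau,1}=0$ we set $\mm{k}{\sigma}=\mm{k-1}{\sigma}$ throughout $\cantor_{n,\tau}$, and we must check that the full block has constant sign; this holds because one of the two $s$-values vanishes, which (by the sign property of the sums) forces the corresponding sub-collection of $\mm{k-1}{}$-values to be entirely $0$, leaving only the other sub-collection, which has constant sign by hypothesis. In the case $s_{n,\tau,0}\cdot s_{n,\tau,1}\neq 0$, the numbers in $\cantor_{n,\tau,0}$ get multiplied by the common factor $1+d(s_{n,\tau,0},s_{n,\tau,1})/s_{n,\tau,0}$, which lies in $[0,1]$ by Remark~\ref{lem:d}(ii), so their signs are preserved (or they all become $0$); similarly the numbers in $\cantor_{n,\tau,1}$ get multiplied by $1-d(s_{n,\tau,0},s_{n,\tau,1})/s_{n,\tau,1}=1+d(s_{n,\tau,1},s_{n,\tau,0})/s_{n,\tau,1}\in[0,1]$, again by Remark~\ref{lem:d}(i)--(ii). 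Finally, since $s_{n,\tau,0}$ and $s_{n,\tau,1}$ have opposite signs here, Remark~\ref{lem:d}(v) gives that one of the two multipliers is $0$, so one of the two sub-blocks is annihilated and the surviving sub-block has constant sign by hypothesis; hence the whole block $\cantor_{n,\tau}$ has constant sign.

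Setting $k=n$ (and $\tau=\emptyset$) in the statement just proved gives that $\{\tilde m_\sigma=\mm{n}{\sigma}\colon\sigma\in\cantor_n\}$ has constant sign, which is the ``in particular'' assertion.

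\textbf{Main obstacle.} The only delicate point is bookkeeping: one must be careful to phrase the inductive hypothesis at level $k-1$ so that it applies simultaneously to both halves $\cantor_{n,\tau,0}$ and $\cantor_{n,\tau,1}$ (this is why the statement is formulated blockwise over $\tau\in\cantor_{n-k}$ rather than just globally over $\cantor_n$), and to invoke Remark~\ref{lem:d}(v) at the right moment to see that whenever a genuine ``transfer'' occurs, one entire half of the block is zeroed out — this is what prevents the two halves, which may a priori have opposite signs, from coexisting after the step. Beyond that, everything is a routine application of the four sign properties of $d$ collected in Remark~\ref{lem:d}.
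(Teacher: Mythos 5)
Your overall strategy --- induction on $k$, applying the inductive hypothesis at level $k-1$ to the two halves $\tau\smallfrown 0$ and $\tau\smallfrown 1$, observing that each sum $s_{n,\tau,i}$ carries the common sign of its (constant-sign) summands and vanishes only if they all do, and invoking Remark~\ref{lem:d}(v) to annihilate one half when a genuine transfer occurs --- is exactly the paper's proof, and the base case and the ``in particular'' conclusion are handled the same way.

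There is, however, one slip in the case analysis of your inductive step. You split according to whether $s_{n,\tau,0}\cdot s_{n,\tau,1}$ is zero or not (mirroring the construction), and in the case $s_{n,\tau,0}\cdot s_{n,\tau,1}\neq 0$ you assert that ``$s_{n,\tau,0}$ and $s_{n,\tau,1}$ have opposite signs here'', concluding via Remark~\ref{lem:d}(v) that one of the two multipliers vanishes and one sub-block is killed. That assertion is false in the subcase $s_{n,\tau,0}\cdot s_{n,\tau,1}>0$: there $d(s_{n,\tau,0},s_{n,\tau,1})=0$, both multipliers equal $1$, and neither sub-block is annihilated. The conclusion still holds in that subcase, but for a different (and easier) reason that your argument as written does not supply: each half has constant sign equal to the sign of its sum, and since the two sums now have the same strict sign, the two halves already share a common sign, so their union has constant sign with nothing to check. (The paper sidesteps this by splitting into $s_{n,\tau,0}\cdot s_{n,\tau,1}\geq 0$ --- where the values are unchanged and either the two signs agree or one half is identically zero --- versus $s_{n,\tau,0}\cdot s_{n,\tau,1}<0$, where your annihilation argument applies verbatim.) With that one subcase added, your proof is complete and coincides with the paper's.
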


\begin{proof} We proceed by induction on~$k$. The case $k=1$ follows
immediately from Remark~\ref{lem:d}(v). Suppose that $k\in \{2,\dots,n\}$
and that the inductive hypothesis holds. Define $\tau_i:=\tau \smallfrown i$ for $i\in \{0,1\}$, so that
$\tau_i\in \cantor_{n-k+1}$ and each of the collections
$$
	\{\mm{k-1}{\tau_i \smallfrown \tau'}\colon  \tau'\in \cantor_{k-1}\}
$$
has constant sign, which in turn coincides with the sign of
$$
	s_{n,\tau,i}=\sum_{\sigma\in \cantor_{n,\tau,i}} \mm{k-1}{\sigma}=
	\sum_{\tau'\in \cantor_{k-1}} \mm{k-1}{\tau_i \smallfrown \tau'}.
$$
If $s_{n,\tau,0}\cdot s_{n,\tau,1}\geq 0$, then 
$\mm{k}{\tau \smallfrown \tau'}=\mm{k-1}{\tau \smallfrown \tau'}$
for every $\tau'\in \cantor_k$ and so the collection
$$
	\{\mm{k}{\tau \smallfrown \tau'}\colon \tau'\in \cantor_{k}\}=
	\{\mm{k-1}{\tau_0 \smallfrown \tau'}\colon \tau'\in \cantor_{k-1}\}\cup
	\{\mm{k-1}{\tau_1 \smallfrown \tau'}\colon \tau'\in \cantor_{k-1}\}
$$
has constant sign, as required.
If $s_{n,\tau,0}\cdot s_{n,\tau,1}< 0$, then 
\begin{equation}\label{duplo}
\begin{split}
	\mm{k}{\tau_0 \smallfrown \tau'} &:= \mm{k-1}{\tau_0 \smallfrown \tau'} \cdot \Big(1+ 
	\frac{d(s_{n,\tau,0},s_{n,\tau,1})}{s_{n,\tau,0}} \Big)\\
	\mm{k}{\tau_1 \smallfrown \tau'} &:= \mm{k-1}{\tau_1 \smallfrown \tau'} \cdot \Big(1-
		\frac{d(s_{n,\tau,0},s_{n,\tau,1})}{s_{n,\tau,1}} \Big)
\end{split}
\end{equation}
for every $\tau' \in \cantor_{k-1}$, so 
each of the collections $\{\mm{k}{\tau_i \smallfrown \tau'}\colon \tau'\in \cantor_{k-1}\}$ has constant sign. 
On the other hand, by Remark~\ref{lem:d}(v) and~\eqref{duplo}, we have
either $\mm{k}{\tau_0 \smallfrown \tau'}=0$ for every $\tau' \in \cantor_{k-1}$
or $\mm{k}{\tau_1 \smallfrown \tau'}=0$ for every $\tau' \in \cantor_{k-1}$. It follows that
the collection
$$
	\{\mm{k}{\tau \smallfrown \tau'}\colon \tau'\in \cantor_{k}\}=
	\{\mm{k}{\tau_0 \smallfrown \tau'}\colon \tau'\in \cantor_{k-1}\}\cup
	\{\mm{k}{\tau_1 \smallfrown \tau'}\colon \tau'\in \cantor_{k-1}\}
$$
has constant sign and the proof is over.
\end{proof}

\begin{lem}\label{lemmaconstructo}
Let $\sigma\in\tcantor$. The following statements hold:
\begin{enumerate}
\item If $m_\sigma\geq 0$, then $0\leq\tilde{m}_\sigma \leq m_\sigma$.
\item If $\mu(\cantor) \geq 0$ and $m_\sigma\leq 0$, then $\tilde{m}_\sigma=0$.
\end{enumerate}
\end{lem}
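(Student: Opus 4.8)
The plan is to prove both statements by induction on the length $n$ of $\sigma$, tracking how the quantities $\mm{k}{\sigma}$ evolve through the construction in Subsection~\ref{subsection:construction}. The key observation is that the recursive definition only multiplies the previous value $\mm{k-1}{\sigma}$ by a factor of the form $1+d(s_0,s_1)/s_0$ or $1-d(s_0,s_1)/s_1$ (or leaves it unchanged), and Remark~\ref{lem:d}(ii) guarantees each such factor lies in $[0,1]$. Hence the absolute values $|\mm{k}{\sigma}|$ are non-increasing in $k$, and the sign of $\mm{k}{\sigma}$ never flips (this is essentially built into the statement of Lemma~\ref{lem:signs}). So the core of statement~(i) is just: $|\tilde{m}_\sigma| = |\mm{n}{\sigma}| \le |\mm{0}{\sigma}| = |m_\sigma|$, combined with the sign being constant, which gives $0 \le \tilde m_\sigma \le m_\sigma$ when $m_\sigma \ge 0$.

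For statement~(i) in detail, I would first record the monotonicity lemma: for $\sigma \in \cantor_n$ and $0 \le k \le n$, one has $|\mm{k}{\sigma}| \le |\mm{k-1}{\sigma}|$ (for $k\ge1$) and $\mm{k}{\sigma}$ has the same sign as $\mm{k-1}{\sigma}$, or is zero. The $k=1$ case uses Remark~\ref{lem:d}(iii)--(iv): if $m_{\tau\smallfrown 0}\ge 0$ then $0 \le m_{\tau\smallfrown 0}+d(m_{\tau\smallfrown 0},m_{\tau\smallfrown 1}) \le m_{\tau\smallfrown 0}$, and symmetrically using Remark~\ref{lem:d}(i) for the sibling; the case $m_{\tau\smallfrown 0}\le 0$ is dual. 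For $k \ge 2$, if $s_{n,\tau,0}\cdot s_{n,\tau,1}=0$ nothing changes, and otherwise the multiplying factor is in $[0,1]$ by Remark~\ref{lem:d}(ii), so $|\mm{k}{\sigma}| = |\mm{k-1}{\sigma}|\cdot(\text{factor in }[0,1]) \le |\mm{k-1}{\sigma}|$ and the sign is preserved. Chaining these from $k=0$ up to $k=n$ yields $|\tilde m_\sigma|\le|m_\sigma|$ with matching sign (or $\tilde m_\sigma=0$), which is exactly~(i).

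For statement~(ii), assume $\mu(\cantor)\ge 0$ and $m_\sigma \le 0$; I want $\tilde m_\sigma=0$. By~(i) (the dual inequality $m_\sigma \le \tilde m_\sigma \le 0$) we already know $\tilde m_\sigma \le 0$, so it suffices to show $\tilde m_\sigma \ge 0$, i.e.\ that the sign cannot stay negative all the way. Write $n=\mathrm{length}(\sigma)$ and suppose for contradiction $\tilde m_\sigma<0$. By Lemma~\ref{lem:signs}, all of $\{\tilde m_{\sigma'}\colon \sigma'\in\cantor_n\}$ would then be $\le 0$, and since $\tilde m_\sigma<0$ the sum $\sum_{\sigma'\in\cantor_n}\tilde m_{\sigma'}$ would be strictly negative; but Lemma~\ref{lem:sumatotal} says this sum equals $\mu(\cantor)\ge 0$, a contradiction. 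Hence $\tilde m_\sigma=0$.

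The main obstacle is getting the monotonicity/sign-preservation bookkeeping for $k\ge 2$ exactly right: one must verify that in the nontrivial branch ($s_{n,\tau,0}\cdot s_{n,\tau,1}\ne 0$) the relevant denominator $s_{n,\tau,i}$ is the sum over a block of constant-sign terms (Lemma~\ref{lem:signs}), so that dividing $\mm{k-1}{\sigma}$ by $s_{n,\tau,i}$ and applying Remark~\ref{lem:d}(ii) is legitimate and yields a factor in $[0,1]$ regardless of the common sign of that block; this requires invoking Lemma~\ref{lem:signs} at each inductive step rather than treating the $\mm{k}{\sigma}$ as if they were individual measures. Everything else is a routine application of Remark~\ref{lem:d}.
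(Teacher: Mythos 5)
Your proposal is correct and follows essentially the same route as the paper: part (i) via the chain of inequalities $0\leq \mm{n}{\sigma}\leq\dots\leq\mm{1}{\sigma}\leq m_\sigma$ obtained from Remark~\ref{lem:d} (the multiplicative factors lying in $[0,1]$), and part (ii) via the dual chain plus the contradiction combining Lemma~\ref{lem:signs} with Lemma~\ref{lem:sumatotal}. (Your only superfluous worry is that Lemma~\ref{lem:signs} is needed for the $k\geq2$ monotonicity step; Remark~\ref{lem:d}(ii) needs only $s_{n,\tau,i}\neq0$, which the construction's case split already guarantees.)
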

\begin{proof}
Write $n:=\mathrm{length}(\sigma)$. 

(i) We shall prove that 
\begin{equation}\label{chainPOS}
	0\leq \mm{n}{\sigma} \leq \mm{n-1}{\sigma} \leq \dots \leq \mm{1}{\sigma} \leq m_{\sigma}.
\end{equation}
The inequalities $0\leq \mm{1}{\sigma}\leq m_\sigma$ follow immediately
from the very definition of $\mm{1}{\sigma}$ and Remark~\ref{lem:d} (parts (i) and~(iii)). Assume that 
$0\leq\mm{k-1}{\sigma}\leq\dots\leq\mm{1}{\sigma}\leq m_\sigma$ for some $k\in \{2,\dots,n\}$. 
Write $\tau:=\sigma|_{n-k}$. If $s_{n,\tau,0}\cdot s_{n,\tau,1} \geq 0$, then $\mm{k}{\sigma}=\mm{k-1}{\sigma}$;
otherwise we have
$$
	\mm{k}{\sigma}=
	\begin{cases}
	\mm{k-1}{\sigma}\cdot \Big(1+ \frac{d(s_{n,\tau,0},s_{n,\tau,1})}{s_{n,\tau,0}}\Big) & \text{if $\sigma(n-k+1)=0$}\\
	\mm{k-1}{\sigma}\cdot \Big(1- \frac{d(s_{n,\tau,0},s_{n,\tau,1})}{s_{n,\tau,1}}\Big) & \text{if $\sigma(n-k+1)=1$}
	\end{cases}
$$
and in either case $0\leq\mm{k}{\sigma}\leq\mm{k-1}{\sigma}$ (by Remark~\ref{lem:d} --(i) and~(ii)-- 
bearing in mind that $\mm{k-1}{\sigma}\geq 0$). This proves~\eqref{chainPOS} and therefore
$0\leq \mm{n}{\sigma}=\tilde{m}_\sigma \leq m_\sigma$.

(ii) In the same way, the following chain of inequalities holds true: 
\begin{equation*}
	0\geq \tilde{m}_\sigma=\mm{n}{\sigma} \geq \mm{n-1}{\sigma} \geq \dots \geq \mm{1}{\sigma} \geq m_{\sigma}.
\end{equation*}
We now argue by contradiction. Suppose that $\tilde{m}_\sigma<0$. Then Lemma~\ref{lem:signs} ensures that $\tilde{m}_{\sigma'}\leq 0$
for every $\sigma' \in \cantor_n$.  
Bearing in mind Lemma~\ref{lem:sumatotal}, we obtain
$$
	0\leq \mu(\cantor)=\sum_{\sigma' \in \cantor_n}\tilde{m}_{\sigma'} \leq \tilde{m}_{\sigma}< 0,
$$
a contradiction. The proof is over.
\end{proof}

\subsection{Compensation function}
We follow the notations introduced in Subsection~\ref{subsection:construction} with
some obvious modifications to denote dependence with respect to~$\mu\in \dualC[\cantor]$.

\begin{defi}\label{defi:CompensationCantor}
Let $\mu\in \dualC[\cantor]$. We define $\xi(\mu) \in \mathcal{M}(\cantor)$ as follows:
\begin{enumerate}
\item If $\mu(\cantor)<0$, then $\xi(\mu):=0$.
\item If $\mu(\cantor)\geq 0$, then $\xi(\mu)$ is the unique element of~$\dualC[\cantor]$ such that 
$$
	\xi(\mu)(N_\sigma)=\tilde{m}_\sigma(\mu) \quad \mbox{ for every }\sigma\in\tcantor.
$$
The existence of $\xi(\mu)$ is ensured by Lemma~\ref{lemmaConserva}
via a standard argument.
\end{enumerate}
\end{defi}

\begin{pro}\label{pro:CompensationCantor}
$\xi\colon \mathcal{M}(\cantor) \to \mathcal{M}(\cantor)$ is a compensation function.
\end{pro}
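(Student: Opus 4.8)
The plan is to verify the three defining properties of a compensation function for the explicitly constructed map $\xi$: (a) $\weak^*$-$\weak^*$-continuity, (b) the order/mass conditions ($0\le\xi(\mu)\le\mu^+$ and $\xi(\mu)(\cantor)=\mu(\cantor)$ when $\mu(\cantor)>0$, and $\xi(\mu)=0$ when $\mu(\cantor)\le 0$), and (c) well-definedness of $\xi(\mu)$ as a measure. Property (c) and part of (b) are already packaged in Definition~\ref{defi:CompensationCantor} together with Lemmas~\ref{lemmaConserva}, \ref{lem:sumatotal} and~\ref{lemmaconstructo}, so the real work is to assemble these and to prove continuity.

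First I would dispatch the order and mass conditions. When $\mu(\cantor)<0$ we have $\xi(\mu)=0$ by definition, which is the required ``$\nu=0$ if $\mu(K)\le 0$'' case (the borderline $\mu(\cantor)=0$ is harmless: then Lemma~\ref{lemmaconstructo}(ii) forces $\tilde m_\sigma(\mu)=0$ for all $\sigma$, so $\xi(\mu)=0$ anyway). When $\mu(\cantor)>0$: Lemma~\ref{lem:sumatotal} with $n=0$ gives $\xi(\mu)(\cantor)=\tilde m_{\emptyset}(\mu)=\mu(\cantor)$. For $0\le\xi(\mu)\le\mu^+$, I would first note that Lemma~\ref{lemmaconstructo} yields $0\le\tilde m_\sigma(\mu)$ for every $\sigma\in\tcantor$, and that whenever $m_\sigma=\mu(N_\sigma)\le 0$ we get $\tilde m_\sigma(\mu)=0$, while when $m_\sigma\ge 0$ we get $\tilde m_\sigma(\mu)\le m_\sigma$. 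Since the clopen cylinders $N_\sigma$ generate the Borel $\sigma$-algebra of $\cantor$ and $\xi(\mu)\ge 0$ is a finite positive measure, to conclude $\xi(\mu)\le\mu^+$ it suffices to check $\xi(\mu)(N_\sigma)\le\mu^+(N_\sigma)$ for all $\sigma$; and indeed $\tilde m_\sigma(\mu)\le\max\{m_\sigma,0\}\le\mu^+(N_\sigma)$. A short measure-theoretic lemma (monotone class / $\pi$-$\lambda$) then upgrades the inequality on cylinders to $\xi(\mu)\le\mu^+$ as measures.

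Next, continuity. Fix $\sigma\in\tcantor$ with $\mathrm{length}(\sigma)=n$. Tracing the construction in Subsection~\ref{subsection:construction}, $\tilde m_\sigma(\mu)=\mm{n}{\sigma}$ is obtained from the finitely many numbers $\{m_\tau:\tau\in\cantor_n\}=\{\mu(N_\tau):\tau\in\cantor_n\}$ by finitely many applications of the continuous function $d$, the field operations, and the branching according to the signs of the partial sums $s_{n,\tau,i}$. On each region where all the relevant products $s_{n,\tau,0}\cdot s_{n,\tau,1}$ avoid $0$ this is manifestly continuous in $(m_\tau)_{\tau\in\cantor_n}$; the only possible trouble is at a branch boundary $s_{n,\tau,0}\cdot s_{n,\tau,1}=0$. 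There the ``$=0$'' rule sets $\mm{k}{\sigma}=\mm{k-1}{\sigma}$, while the limit from the ``$\neq 0$'' side has $d(s_{n,\tau,0},s_{n,\tau,1})\to 0$ (since $d(s_1,s_2)\to 0$ as $s_1s_2\to 0$, because $|d(s_1,s_2)|\le\min\{|s_1|,|s_2|\}$) and hence the correcting factor $1\pm d/s_{n,\tau,i}\to 1$ whenever the denominator stays bounded away from $0$; when the denominator itself tends to $0$ one uses $|d(s_1,s_2)/s_i|\le 1$ together with $\mm{k-1}{\sigma}\to 0$ (by Lemma~\ref{lem:signs}, $\mm{k-1}{\sigma}$ has the same sign as, and is dominated in absolute value by, $|s_{n,\tau,\sigma(n-k+1)}|$, which $\to 0$). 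Either way $\mm{k}{\sigma}$ depends continuously on $(m_\tau)_{\tau\in\cantor_n}$. Composing with the $\weak^*$-continuous evaluations $\mu\mapsto(\mu(N_\tau))_{\tau\in\cantor_n}$ shows $\mu\mapsto\tilde m_\sigma(\mu)$ is $\weak^*$-continuous for each $\sigma$, hence $\mu\mapsto\xi(\mu)(N_\sigma)$ is $\weak^*$-continuous; since the span of the $\mathbbm{1}_{N_\sigma}$ is dense in $C(\cantor)$ and $\|\xi(\mu)\|=|\xi(\mu)(\cantor)|\le|\mu(\cantor)|\le\|\mu\|$ is bounded, a standard equicontinuity/density argument gives that $\mu\mapsto\xi(\mu)$ is $\weak^*$-$\weak^*$-continuous on $\mathcal{M}(\cantor)$.

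\textbf{Main obstacle.} I expect the delicate point to be the continuity at the branch boundaries — specifically making rigorous that no discontinuity arises when a partial sum $s_{n,\tau,i}$ passes through (or approaches) $0$ while appearing in a denominator. The bounds in Remark~\ref{lem:d}(ii)--(iv) and the sign/domination information from Lemma~\ref{lem:signs} are exactly what is needed to control the quotients $d(s_{n,\tau,0},s_{n,\tau,1})/s_{n,\tau,i}$ in those degenerate limits, so the argument should go through, but it requires care to organize the case analysis cleanly (one convenient way is to prove, by induction on $k$ together with Lemma~\ref{lem:signs}, that the map $(m_\tau)_{\tau\in\cantor_n}\mapsto(\mm{k}{\sigma})_{\sigma\in\cantor_n}$ is continuous on all of $\R^{\cantor_n}$). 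The remaining steps are routine given the lemmas already established in Subsection~\ref{subsection:properties}.
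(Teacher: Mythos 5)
Your proposal is correct and follows essentially the same route as the paper: the order and mass conditions are assembled from Lemmas~\ref{lemmaconstructo} and~\ref{lem:sumatotal}, and the $\weak^*$-continuity is proved coordinatewise by induction on~$k$, with the degenerate branch boundaries handled exactly as you indicate via Remark~\ref{lem:d}(ii) and the constant-sign Lemma~\ref{lem:signs}. One small correction: in the borderline case $\mu(\cantor)=0$ the vanishing of all $\tilde{m}_\sigma(\mu)$ does not follow from Lemma~\ref{lemmaconstructo}(ii) alone (which only covers $m_\sigma\leq 0$), but from the fact that all $\tilde{m}_\sigma(\mu)\geq 0$ together with Lemma~\ref{lem:sumatotal}, which forces them to sum to $\mu(\cantor)=0$ on each level.
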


\begin{proof}
Given any $\mu \in \dualC[\cantor]$ with $\mu(\cantor)\geq 0$, we have
$0 \leq \xi(\mu)(N_\sigma) \leq \mu^+(N_\sigma)$ 
for every $\sigma\in\tcantor$ (thanks to Lemma~\ref{lemmaconstructo})
and, by the very definitions, $\xi(\mu)(\cantor)=\mu(\cantor)$. 
Hence $\xi(\mu)$ is a compensation of~$\mu$ for every $\mu\in \dualC[\cantor]$.

To prove that $\xi$ is a compensation function, it only remains to show that it 
is $\weak^*$-$\weak^*$-continuous. Of course,
it suffices to check the continuity of~$\xi$ on 
$$
	\mathcal{H}:=\{\mu \in \mathcal{M}(\cantor)\colon \, \mu(\cantor)\geq 0\},
$$
which is equivalent to saying that, for every $\sigma \in \tcantor$, the real-valued function 
$$
	\mu \mapsto \xi(\mu)(N_\sigma)=\tilde{m}_\sigma(\mu)
$$ 
is $\weak^*$-continuous on~$\mathcal{H}$. Fix $n\in \N\cup \{0\}$. We shall prove that 
$$
	\mu \mapsto \mm{k}{\sigma}(\mu) \quad
	\mbox{is }\weak^*\mbox{-continuous on }\mathcal{H}
	\mbox{ for every }\sigma \in \cantor_n \mbox{ and }k\in \{0,1,\dots,n\}
$$
by induction on~$k$. The cases $k=0$ and $k=1$ are obvious. Suppose $k\in \{2,\dots,n\}$ and that the inductive hypothesis
holds.
Fix $\sigma \in \cantor_n$ and write $\tau:=\sigma|_{n-k}$. Then the mappings
\[
	s_{n,\tau,0}(\cdot)=\sum_{\sigma'\in \cantor_{n,\tau,0}} \mm{k-1}{\sigma'}(\cdot)
	\quad \text{ and } 
	\quad s_{n,\tau,1}(\cdot)=\sum_{\sigma'\in \cantor_{n,\tau,1}} \mm{k-1}{\sigma'}(\cdot)
\]
are $\weak^*$-continuous on~$\mathcal{H}$. Suppose that $\sigma(n-k+1)=0$ (the other case is analogous).
Then for every $\mu \in \mathcal{H}$ we have
\begin{equation}\label{laformula}
	\mm{k}{\sigma}(\mu)= \mm{k-1}{\sigma}(\mu)\cdot \Big(1+\frac{d(s_{n,\tau,0}(\mu),s_{n,\tau,1}(\mu))}{s_{n,\tau,0}(\mu)}\Big)
\end{equation}
if $s_{n,\tau,0}(\mu)\neq 0$, while $\mm{k}{\sigma}(\mu)= \mm{k-1}{\sigma}(\mu)$ if $s_{n,\tau,0}(\mu)=0$.
From~\eqref{laformula} it follows
at once that $\mm{k}{\sigma}(\cdot)$ is $\weak^*$-continuous at every $\mu \in \mathcal{H}$ with $s_{n,\tau,0}(\mu)\neq 0$.

Take any $\mu_0 \in \mathcal{H}$ with $s_{n,\tau,0}(\mu_0)=0$. Since 
$\{\mm{k-1}{\sigma'}(\mu_0)\colon \sigma'\in \cantor_{n,\tau,0}\}$ has constant sign
(by Lemma~\ref{lem:signs} applied to~$\tau\smallfrown 0$ and $k-1$), we get $\mm{k-1}{\sigma}(\mu_0)=0$ and so
$\mm{k}{\sigma}(\mu_0)=\mm{k-1}{\sigma}(\mu_0)=0$. Bearing in mind~\eqref{laformula}
and Remark~\ref{lem:d}(ii), we obtain
$$
	\big|\mm{k}{\sigma}(\mu)-\mm{k}{\sigma}(\mu_0)\big|
	=\big|\mm{k}{\sigma}(\mu)\big| \leq \big|\mm{k-1}{\sigma}(\mu)\big|=
	\big|\mm{k-1}{\sigma}(\mu)-\mm{k-1}{\sigma}(\mu_0)\big|
$$
for every $\mu\in \mathcal{H}$. This inequality and the inductive hypothesis imply
that the mapping $\mm{k}{\sigma}(\cdot)$ is $\weak^*$-continuous at~$\mu_0$. The proof is finished.
\end{proof}

\section{Beyond the metrizable case}\label{section4}

In this section we discuss the existence of compensation functions
in certain non-metrizable compacta. Specifically, we deal
with one-point compactifications of discrete sets 
(Subsection~\ref{subsection:compactification})
and ordinal intervals (Subsection~\ref{subsection:ordinal}).
We shall provide examples of compact spaces~$K$ which admit local compensation but 
no $B_{\dualC}$-compensation function. Those examples and Proposition~\ref{pro:bola} below
make clear that there exist compact spaces which do not admit local compensation. 

\begin{pro}\label{pro:bola}
Let $K$ be a compact space. If $B_{\dualC}$ (equipped with the $\weak^*$-topology)
admits local compensation, then $K$ admits a $B_{\dualC}$-compensation function.
\end{pro}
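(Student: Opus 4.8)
The plan is to build the desired $B_{\dualC}$-compensation function $\xi$ directly from the hypothesis. Consider the compact space $L:=B_{\dualC}$ equipped with the weak$^*$ topology. By hypothesis, $L$ admits local compensation, i.e., every element of $W(L)$ admits a compensation in the sense of Definition~\ref{defi:compensation}. The natural candidate to feed into this hypothesis is the \emph{identity embedding}: define $F_0\colon L\to B_{\dualC}$ by $F_0(\mu):=\mu$. First I would check that $F_0\in W(L)$, that is, that $F_0$ is $\weak^*$-continuous as a map from $B_{\dualC}$ (with the weak$^*$ topology) into $B_{\dualC}$ (with the weak$^*$ topology); this is immediate, since $F_0$ is literally the identity map, which is a homeomorphism. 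Then, applying the hypothesis to $F_0$, there is a compensation $G_0\in W(L)$ of $F_0$; by Definition~\ref{defi:compensation}(ii), $G_0(\mu)$ is a compensation of $F_0(\mu)=\mu$ for every $\mu\in B_{\dualC}$.

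The next step is to observe that $\xi:=G_0\colon B_{\dualC}\to \dualC$ is exactly what we want. Indeed, $\xi$ is $\weak^*$-$\weak^*$-continuous because $G_0\in W(L)$ means precisely that $G_0\colon L\to B_{\dualC}$ is weak$^*$-continuous, and composing with the inclusion $B_{\dualC}\hookrightarrow \dualC$ (a weak$^*$-continuous map) keeps continuity; and $\xi(\mu)=G_0(\mu)$ is a compensation of $\mu$ for every $\mu\in B_{\dualC}$, which is exactly the requirement that $\xi$ be a $B_{\dualC}$-compensation function in the sense of Definition~\ref{defi:MCompensation} (with $M=B_{\dualC}$). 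This completes the argument.

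I do not expect any serious obstacle here: the proof is essentially the remark that the hypothesis of admitting local compensation, when specialized to the tautological $W(L)$-element given by the identity map of $B_{\dualC}$, yields precisely a $B_{\dualC}$-compensation function. The only point requiring a line of care is that $W(L)$ is defined for the \emph{specific} compact space $L$, so one must confirm that $L=B_{\dualC}$ is indeed a compact (Hausdorff) space under the weak$^*$ topology — which holds by the Banach--Alaoglu theorem — so that Definition~\ref{defi:compensation} applies to it and the phrase ``admits local compensation'' is meaningful. With that in hand the rest is a direct unwinding of the definitions.
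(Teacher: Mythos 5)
Your argument rests on a misreading of the hypothesis, and this is a genuine gap rather than a fixable detail of exposition. Write $L:=B_{\dualC}$ with the $\weak^*$-topology. The statement ``$L$ admits local compensation'' refers to Definition~\ref{defi:compensation} applied to the compact space $L$, so the relevant class $W(L)$ consists of the $\weak^*$-continuous maps $F\colon L\to B_{\dualC[L]}$, where $\dualC[L]$ is the space of regular Borel measures \emph{on the compact space $L$ itself}, not on $K$. Your candidate $F_0(\mu):=\mu$ takes values in $B_{\dualC}=B_{\dualC[K]}$, which is a different space from $B_{\dualC[L]}$, so $F_0\notin W(L)$ and the hypothesis simply does not apply to it. (Equivalently: a compensation of $F_0(\mu)$ in the sense required by the hypothesis would have to be a measure $\nu$ on $L$ with $0\le\nu\le (F_0(\mu))^+$ and $\nu(L)=F_0(\mu)(L)$, which is meaningless for $F_0(\mu)=\mu\in\dualC$.) Hence the ``direct unwinding of the definitions'' does not go through, and nothing in your proposal produces a map defined on $B_{\dualC}$ with values in $\dualC$ that compensates each $\mu$.

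The missing ingredient is a transfer between $\dualC$ and $\dualC[L]$. The paper's proof embeds $K$ into $L$ by $\phi(t):=\delta_t$ (a homeomorphism onto $\phi(K)$) and feeds into the hypothesis the push-forward map $F\colon L\to B_{\dualC[L]}$ given by $\la F(\mu),f\ra=\la\mu,f\circ\phi\ra$ for $f\in C(L)$, i.e.\ $F(\mu)(D)=\mu(\phi^{-1}(D))$; this $F$ genuinely belongs to $W(L)$. Local compensation of $L$ then yields a $\weak^*$-continuous $G\colon L\to B_{\dualC[L]}$ with $G(\mu)$ a compensation of $F(\mu)$, and one must travel back: since $\supp(G(\mu))\subseteq\supp(F(\mu))\subseteq\phi(K)$, restricting $G(\mu)$ to $\phi(K)$ and transporting it to $K$ via the adjoint of the isometry $g\mapsto g\circ\phi^{-1}$ gives the desired $\xi$. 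Verifying that this return trip preserves both the pointwise compensation properties and the $\weak^*$-$\weak^*$-continuity (which uses the support inclusion above) is the actual content of the proposition; none of it appears in your argument.
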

\begin{proof} Write $L:=B_{\dualC}$ and let $\phi\colon K \to L$ be defined by $\phi(t):=\delta_t$, so that
$\phi$ is a homeomorphism onto~$\phi(K)$. Let $F\colon L \to B_{\dualC[L]}$ be the function defined by
$$
	\la F(\mu),f \ra=
	\la \mu, f \circ \phi \ra
	\quad
	\mbox{for every }f\in C(L) \mbox{ and }\mu \in L.
$$
Observe that $F(\mu)(D)=\mu(\phi^{-1}(D))$ for every $\mu\in L$ and every Borel set $D \subseteq L$.	
Since $F$ is $\weak^*$-continuous and $L$ admits local compensation, there is a $\weak^*$-continuous
function $G\colon L \to B_{\dualC[L]}$ such that $G(\mu)$ is a compensation
of~$F(\mu)$ for every $\mu\in L$. Let $S\colon \dualC[L] \to \dualC[\phi(K)]$ be the restriction operator
and $U\colon C(K) \to C(\phi(K))$ the isometric isomorphism given by $U(g):=g\circ \phi^{-1}$.
Define $\xi\colon L \to L$ by $\xi:=U^*\circ S\circ G$. 

We shall check that $\xi$ is a $B_{\dualC}$-compensation function.
Note first that $S\circ G$ is $\weak^*$-$\weak^*$-continuous, thanks to the $\weak^*$-continuity of~$G$
and the fact that 
\begin{equation}\label{inclusionsFG}
	\supp(G(\mu)) \subseteq \supp(F(\mu)) \subseteq \phi(K) \quad
	\mbox{for every }\mu\in L.
\end{equation}
Hence $\xi$ is $\weak^*$-$\weak^*$-continuous as well. On the other hand,
take any $\mu\in L$. Since $G(\mu)$ is a compensation of~$F(\mu)$ and the inclusions~\eqref{inclusionsFG} hold,
it follows at once that $S(G(\mu))$ is a compensation of~$S(F(\mu))$
Therefore, $\xi(\mu)$ is a compensation of~$U^*(S(F(\mu)))=\mu$.
\end{proof}

To go a bit further when studying the existence of 
compensation functions, we introduce the following definition.

\begin{defi}\label{defi:closeness}
Let $K$ be a compact space. A {\em closeness function} for~$K$ is a continuous function 
$$
	c:\{(x,y,z)\in K^3 \colon \, y\neq z\}\to [-1,1]
$$ 
such that:
\begin{enumerate}
\item $c(x,y,z) = -c(x,z,y)$ whenever $y\neq z$;
\item $c(x,x,z)=1$ whenever $x\neq z$.
\end{enumerate}
\end{defi}

\begin{rem}
If $(K,\rho)$ is a compact metric space, then the formula 
$$
	c(x,y,z) := \frac{\rho(x,z)-\rho(x,y)}{\max\{\rho(x,y),\rho(x,z)\}} 
$$
provides a closeness function for~$K$.
\end{rem}

Next lemma gives a connection between closeness and compensation functions:

\begin{lem}\label{pro:CompensationImpliesCloseness}
Let $K$ be a compact space. If $K$ admits a $B_{\dualC}$-compensation function, 
then $K$ admits a closeness function.
\end{lem}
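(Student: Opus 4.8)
The statement to prove is Lemma~\ref{pro:CompensationImpliesCloseness}: if $K$ admits a $B_{\dualC}$-compensation function, then $K$ admits a closeness function. The plan is to extract the closeness function $c$ pointwise from the compensation function $\xi$ applied to suitable two-point signed measures. Given distinct $y,z\in K$, the natural measure to feed into $\xi$ is $\mu_{y,z}:=\delta_y-\delta_z$, which has $\mu_{y,z}(K)=0\geq 0$, so $\xi(\mu_{y,z})$ is defined and is a compensation of $\mu_{y,z}$; by Definition~\ref{defi:compensation}(i), since $\mu_{y,z}(K)=0\leq 0$, we get $\xi(\delta_y-\delta_z)=0$. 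That is not directly useful, so instead I would use a one-sided perturbation: fix a reference point $x$ and consider measures whose support sits in $\{x,y,z\}$ with a small positive mass at $x$, or rather work with $\mu=\delta_x-\tfrac12\delta_y-\tfrac12\delta_z$ type combinations — but the cleanest choice is $\nu_{y,z}:=2\delta_x - \delta_y$ won't see $z$. The right object: for distinct $y,z$ take the measure $\mu_{y,z}=\delta_y - \delta_z$ is killed, so one must break the symmetry with the ``time'' variable; consider instead, for any third point and any weights, and realize that what we can read off $\xi$ is only positive data. So I would define $c$ using $\xi$ evaluated at measures of the form $\delta_x - \tfrac12(\delta_y+\delta_z)$? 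That again has mass $0$.

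The resolution is to use measures of total mass $>0$: fix $\lambda\in(0,1)$ and set $\mu^{x}_{y,z}:=2\delta_x-\delta_y-\delta_z$ has mass $0$; use $\mu:=\delta_x+\tfrac12\delta_x-\tfrac12\delta_y-\tfrac12\delta_z$, i.e. $\mu^x_{y,z} := \delta_x - t\,\delta_y$ — no. Concretely, the measure to use is $\mu_{x,y,z}:=2\delta_x-\delta_y-\delta_z$ has mass $0$; instead take $\mu_{x,y,z} := 3\delta_x-\delta_y-\delta_z$, which has $\mu(K)=1>0$, $\mu^+=3\delta_x$, $\mu^-=\delta_y+\delta_z$, and $\supp(\mu)\subseteq\{x,y,z\}$. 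Its compensation $\nu=\xi(\mu)$ satisfies $0\leq\nu\leq 3\delta_x$ and $\nu(K)=1$, forcing $\nu=\tfrac13\delta_x$ — again a measure supported at $x$, carrying no information about the relative position of $y$ and $z$. This shows compensations of two- and three-atom measures concentrated the positive mass away from $y,z$ are degenerate, so the information about $y$ versus $z$ must be extracted differently: one must put the \emph{positive} mass at $y$ and/or $z$. So fix the measure $\mu^{x}_{y,z}:=\delta_x+\delta_y-\delta_z$? mass $1>0$, $\mu^+=\delta_x+\delta_y$, $\mu^-=\delta_z$; its compensation $\nu$ has $0\leq\nu\leq\delta_x+\delta_y$ and $\nu(K)=1$, so $\nu=a\delta_x+b\delta_y$ with $a,b\in[0,1]$, $a+b=1$, and $a=a(x,y,z)$, $b=b(x,y,z)$ are continuous functions of the triple. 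The idea: $b(x,y,z)$ measures ``how much mass at $y$ survives when competing against a negative mass at $z$, relative to positive mass at $x$,'' which is exactly a closeness-type quantity.

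The cleanest version: for distinct $y,z$, I would consider the symmetric measure family and define, for $y\neq z$,
\[
	c(x,y,z):= \xi\big(\delta_x + \delta_y - \delta_z\big)(\{y\}) - \xi\big(\delta_x + \delta_z - \delta_y\big)(\{z\}),
\]
then verify Definition~\ref{defi:closeness}: antisymmetry $c(x,y,z)=-c(x,z,y)$ is immediate from swapping $y\leftrightarrow z$ in the formula; the normalization $c(x,x,z)=1$ follows because $\delta_x+\delta_x-\delta_z=2\delta_x-\delta_z$ has positive part $2\delta_x$, so its compensation is $\tfrac{(2-1)}{2}\cdot 2\delta_x = \delta_x$ by Remark~\ref{rem:ExistenceSingle} (compensations of single measures supported at one point are unique), giving the first term $\xi(2\delta_x-\delta_z)(\{x\})=1$; and the second term $\xi(\delta_z+\delta_x-\delta_x)(\{z\})=\xi(\delta_z)(\{z\})=1$, so $c(x,x,z)=1-0$ — I need to recheck signs and the exact combination so the normalization comes out to $1$, adjusting the formula (possibly $c(x,y,z)=2\xi(\delta_x+\delta_y-\delta_z)(\{y\})-1$ with a symmetrization) accordingly. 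Continuity of $c$ on $\{(x,y,z):y\neq z\}$ follows from $\weak^*$-$\weak^*$-continuity of $\xi$ composed with the continuous map $(x,y,z)\mapsto \delta_x+\delta_y-\delta_z$ (continuous into $\dualC$ with the $\weak^*$-topology, since $t\mapsto\delta_t$ is $\weak^*$-continuous), together with the fact that evaluation at the clopen/Borel singleton $\{y\}$... — and here is the subtlety: $\nu\mapsto\nu(\{y\})$ is \emph{not} $\weak^*$-continuous in general. To circumvent this I would instead read off the surviving mass via a limit of continuous functions peaking at $y$, or better, use that on the relevant range $\xi$ lands in measures supported on $\{x,y,z\}$ and express $\nu(\{y\})$ through $\langle\nu,f\rangle$ for a continuous $f$ that is $1$ near $y$ and $0$ near $x,z$ — which exists locally but whose choice cannot be made continuously in $(x,y,z)$ when points collide.

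\textbf{The main obstacle} is precisely this last point: converting the $\weak^*$-continuity of $\xi$ into \emph{joint} continuity of the scalar closeness function $c$, since recovering atomic masses from a measure is not $\weak^*$-continuous and the auxiliary test functions cannot be chosen to vary continuously as $y,z$ approach each other or approach $x$. I expect the fix to be a careful argument showing that, along the specific two-parameter family $\delta_x+\delta_y-\delta_z$, the compensation $\xi$ is forced (by $0\leq\xi(\mu)\leq\mu^+$ and $\xi(\mu)(K)=1$) to be an atomic measure on $\{x,y,z\}$ whose weights then depend $\weak^*$-continuously — hence continuously, being finitely many reals constrained to a simplex — on the triple; the degenerate cases $y\to z$ and $\{x\}\cap\{y,z\}\neq\emptyset$ need separate continuity checks, using the uniqueness of single-atom compensations (Remark~\ref{rem:ExistenceSingle}) to pin down the limiting values and match the closeness axioms.
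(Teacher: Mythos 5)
Your overall strategy --- feed $\xi$ a three-atom measure built from $\delta_x,\delta_y,\delta_z$ and read the closeness function off the atomic weights of the compensation --- is the same as the paper's. But the proof has a genuine gap at exactly the point you flag as ``the main obstacle,'' and the specific measure you settle on makes that obstacle unresolvable. You take $\mu=\delta_x+\delta_y-\delta_z$, whose positive part is $\delta_x+\delta_y$; its compensation is therefore supported on $\{x,y\}$, and to extract $\xi(\mu)(\{y\})$ continuously you would need a test function separating $y$ from $x$ --- but $x=y$ lies \emph{inside} the domain $\{(x,y,z):y\neq z\}$ (it is where the normalization $c(x,x,z)=1$ must hold), so no such separation can be arranged locally near those triples. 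Since evaluation at a singleton is not $\weak^*$-continuous, continuity of your $c$ at points with $x=y$ cannot be established this way, and you leave it as ``I expect the fix to be\dots'' without supplying it. (A secondary issue: $\delta_x+\delta_y-\delta_z$ has total variation $3$, so it does not lie in $B_{\dualC}$, where $\xi$ is defined; one must rescale.)

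The missing idea is a sign choice. The paper uses $f(x,y,z)=\tfrac13(\delta_y+\delta_z-\delta_x)$, i.e.\ it puts the \emph{negative} atom at $x$ and the positive atoms at $y$ and $z$. Then $f(x,y,z)^+=\tfrac13(\delta_y+\delta_z)$, so $\supp\bigl(\xi(f(x,y,z))\bigr)\subseteq\{y,z\}$ --- the compensation never charges $x$. Since $y\neq z$ throughout the domain, one can fix disjoint open sets $V\ni y$, $W\ni z$ and a continuous $\phi$ with $\phi|_V\equiv1$, $\phi|_W\equiv0$; then $\xi(f(x',y',z'))(\{y'\})=\la\xi(f(x',y',z')),\phi\ra$ for all $(x',y',z')\in K\times V\times W$, which is continuous by the $\weak^*$-$\weak^*$-continuity of $\xi$. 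Setting $c(x,y,z)=1-6\,\xi(f(x,y,z))(\{y\})$, antisymmetry follows from $\xi(f)(\{y\})+\xi(f)(\{z\})=\tfrac13$, and $c(x,x,z)=1$ because $f(x,x,z)=\tfrac13\delta_z$ forces $\xi(f(x,x,z))=\tfrac13\delta_z$, which puts no mass at $x$. With your configuration the analogous computation at $x=y$ requires knowing how $\xi$ splits mass between two colliding positive atoms, which is not determined by the axioms of a compensation; so the gap is not merely a missing continuity check but a wrong choice of input measure.
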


\begin{proof}
Fix a $B_{\dualC}$-compensation function $\xi: B_{\dualC} \to B_{\dualC}$. Define  
$$
	c\colon\{(x,y,z)\in K^3 : \, y\neq z\}\to \R, \quad
	c(x,y,z) := 1 - 6\cdot\xi(f(x,y,z))(\{y\}),
$$
where $f(x,y,z):=\frac{1}{3}(\delta_y+\delta_z - \delta_x)$.
We will check that $c$ is a closeness function for~$K$.

Fix $(x,y,z)\in K^3$ with $y\neq z$. Then $f(x,y,z)(K)=\frac{1}{3}>0$, hence
$$
	\xi(f(x,y,z))(K)=\frac{1}{3} \quad
	\mbox{and}\quad 0\leq \xi(f(x,y,z)) \leq (f(x,y,z))^+=\frac{1}{3}(\delta_y+\delta_z).
$$ 
In particular, $c(x,y,z)\in [-1,1]$. 
On one hand, if $x=y$ then
$$
	c(x,x,z) =  1 - 6\cdot \xi(f(x,x,z))(\{x\}) =
	1 - 2\cdot \delta_z(\{x\})=1-0 = 1.
$$ 
On the other hand, since ${\rm supp}(f(x,y,z)) \subseteq \{y,z\}$, we have 
$$
	\frac{1}{3}= \xi(f(x,y,z))(K) = \xi(f(x,y,z))(\{y\})
	+\xi(f(x,y,z))(\{z\}),
$$ 
therefore
$$
	c(x,y,z) = 6\cdot\xi(f(x,y,z))(\{z\})-1 = -c(x,z,y).
$$
We finally check that $c$ is continuous at $(x,y,z)$. Since $y\neq z$, there exist disjoint
open sets $V,W \subseteq K$ with $y\in V$, $z\in W$, and a continuous 
function $\phi\colon K\to [0,1]$ such that $\phi|_{V}\equiv 1$ and $\phi|_W\equiv 0$. 
Then for every $(x',y',z') \in K\times V\times W$ we have
\begin{equation}\label{cambiazo}
	\xi(f(x',y',z'))(\{y'\}) =
	\la \xi(f(x',y',z')),\phi \ra,
\end{equation}
because ${\rm supp}(\xi(f(x',y',z')) \subseteq \{y',z'\}$.
Equality \eqref{cambiazo} and the $\weak^*$-$\weak^*$-continuity of~$\xi$ 
imply that $c$ coincides with a continuous function on $K\times V\times W$, which is
an open neighborhood of $(x,y,z)$.
This shows that $c$ is a closeness function for~$K$.
\end{proof}

Part (i) of the following proposition was pointed out to us
by O.~Kalenda and is included here with his kind permission.

\begin{pro}\label{separable}
Let $K$ be a compact space admitting a closeness function. 
\begin{enumerate}
\item $K$ is first countable.
\item If $K$ is separable, then it is metrizable. 
\end{enumerate}
\end{pro}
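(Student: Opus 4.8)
The plan is to exploit the closeness function $c$ to produce, for each point $x\in K$, a continuous real-valued function that ``sees'' how close other points are to $x$, and then to combine countably many such functions (using separability in part~(ii)) into a countable family separating points. For part~(i), fix $x\in K$; I would like to show that $x$ has a countable neighborhood base. The key idea is to consider, for each $y\neq x$, the function $g_y\colon K\setminus\{y\}\to[-1,1]$, $g_y(z):=c(x,y,z)$ — wait, that is only defined off $y$; instead fix \emph{two} auxiliary distinct points and work with $z\mapsto c(x,y,z)$ on a clopen/open piece. More robustly: for a fixed point $y\ne x$, the function $z\mapsto c(x,y,z)$ is continuous on $K\setminus\{z=y\}$, so I would instead use the map $h\colon K\times K\to[-1,1]\times[-1,1]$ obtained from $c$ on the two coordinates separately and argue that $h$ restricted to a neighborhood of $(x,x)$ is injective-ish. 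The cleanest route to first countability is: show that the topology of $K$ at $x$ is the initial topology induced by the family $\{z\mapsto c(x,y,z): y\ne x\}$ together with the property $c(x,x,z)=1$; since $K$ is compact and this family of continuous functions separates $x$ from every other point (indeed $c(x,x,z)=1$ while, as $z'\to y$, $c(x,y,z')\to$ something bounded away from $1$ by antisymmetry forcing $c(x,y,y)$ to be a point where the value drops), a standard argument gives a countable base at $x$. I expect the main obstacle to be making precise why the single real parameter $c(x,y,\cdot)$ near $z=x$ detects convergence to $x$; this is exactly where condition~(ii) of Definition~\ref{defi:closeness}, $c(x,x,z)=1$, and the antisymmetry condition~(i) are used, and it will require a small compactness/ultrafilter argument ruling out a net $z_\alpha\to x'$ with $x'\ne x$ along which all the functions converge to their values at $x$.

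For part~(ii), assume in addition $K$ is separable and fix a countable dense set $D\subseteq K$. The plan is to build a countable family of continuous functions on $K$ that separates points, which by compactness and normality forces metrizability. For each ordered pair $(y,z)$ of distinct points of $D$, and each $x\in D$, the number $c(x,y,z)$ is available, but I need \emph{functions} on $K$, not just numbers. So instead, for each pair $(d_1,d_2)$ of distinct points of $D$, I would like a continuous function $\psi_{d_1,d_2}\colon K\to\mathbb R$ recording $t\mapsto c(t,d_1,d_2)$; this is continuous on $\{t\in K : t\text{ arbitrary}\}$ since the constraint in the domain of $c$ is only on the last two coordinates, which are fixed and distinct here. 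Thus $\psi_{d_1,d_2}\in C(K)$ for every pair $(d_1,d_2)\in D^2$ with $d_1\ne d_2$, giving a countable family $\mathcal F\subseteq C(K)$. The remaining task is to check $\mathcal F$ separates points of $K$: given $s\ne t$ in $K$, pick by density $d_1\in D$ close to $s$ and $d_2\in D$ close to $t$ (and close enough that $d_1\ne d_2$); then $\psi_{d_1,d_2}(d_1)=c(d_1,d_1,d_2)=1$ while I must argue $\psi_{d_1,d_2}(d_2)=c(d_2,d_1,d_2)=-c(d_2,d_2,d_1)=-1$, so $\psi_{d_1,d_2}$ separates $d_1$ from $d_2$; then by continuity and taking $d_1,d_2$ sufficiently close to $s,t$ respectively, $\psi_{d_1,d_2}$ separates $s$ from $t$ as well. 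Hence $C(K)$ contains a countable separating family, so (being a compact Hausdorff space embeddable in $[-1,1]^{\mathcal F}$ with $\mathcal F$ countable) $K$ is metrizable.

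The step I expect to be genuinely delicate is not the separating-family bookkeeping but the first-countability argument in part~(i): translating ``the continuous functions $c(x,y,\cdot)$ detect the point $x$'' into an honest countable local base. The safe way to organize it is to show that $K\setminus\{x\}=\bigcup_{y\ne x}\{t: c(x,y,t)\le 0\}$ or a similar covering by a family of open sets each of which omits a neighborhood of $x$, invoke that $K$ is compact so locally one needs only finitely many at a time, and extract a countable base at $x$ by a standard diagonal argument over a countable dense set $D$ — using that $\{t: c(x,d,t)<1\}$ for $d\in D\setminus\{x\}$ already covers $K\setminus\{x\}$ by density of $D$ and continuity of $c$. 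I would present part~(i) via this covering argument and part~(ii) via the countable separating family; the closeness-function axioms (i) and (ii) are exactly what make both arguments go through, and no further input is needed.
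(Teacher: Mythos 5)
Your part (ii) is essentially the paper's proof: the countable separating family is exactly $f_{t,s}(x)=c(x,t,s)$ for $t\neq s$ ranging over a countable dense set, and the sign argument (values $>0$ at points near $y$ and $<0$ at points near $z$, via continuity of $c$ at $(y,y,z)$ and $(z,y,z)$) is the same; you only need to fix the two pairs of neighborhoods explicitly to turn ``sufficiently close'' into a proof.

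Part (i), however, has a genuine gap: none of the routes you sketch works as stated. First, the sets $\{t\colon c(x,d,t)<1\}$ do not ``omit a neighborhood of $x$''; they all \emph{contain} $x$, since $c(x,d,x)=-c(x,x,d)=-1<1$ by the two axioms, so that covering carries no information about a local base at $x$ (and $K\setminus\{x\}$ is not compact, so there is no finite subcover to extract). Second, the claim that a family of continuous functions separating $x$ from all other points yields a countable base at $x$ is false in general: by Urysohn's lemma such families exist in every compact Hausdorff space, and not every compact Hausdorff space is first countable, so there is no ``standard argument'' of the kind you invoke. Third, your fallback uses a countable dense set $D$, i.e.\ separability, which is not a hypothesis of part (i) and would collapse (i) into (ii).

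The idea you are missing is the paper's two-step contradiction argument. Step 1 (the key lemma): if $x_0$ lies in the closure of a countable set $D\subseteq K\setminus\{x_0\}$, then $x_0$ is a $\mathcal{G}_\delta$-point. Indeed, for each $z\in D$ one has $c(x_0,x_0,z)=1$, so by continuity there is an open $V_z\ni x_0$ with $z\notin V_z$ and $c(x_0,x,z)>0$ for all $x\in V_z$; if some $x\neq x_0$ lay in $\bigcap_{z\in D} V_z$, then letting $z\to x_0$ along $D$ and using continuity of $c$ would give $c(x_0,x,x_0)\geq 0$, contradicting $c(x_0,x,x_0)=-c(x_0,x_0,x)=-1$. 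Step 2: if $x$ is \emph{not} a $\mathcal{G}_\delta$-point, build recursively $x_{n+1}\in H_n\setminus\{x\}$, where $H_n:=\bigcap_{j\leq n}\{y\colon c(y,x,x_j)=1\}$ is a closed $\mathcal{G}_\delta$-set containing $x$; a cluster point $\tilde{x}$ of $(x_n)$ satisfies $\tilde{x}\neq x$ by Step 1, lies in every $H_n$, so $c(\tilde{x},x,x_n)=1$ for all $n$, whence $c(\tilde{x},x,\tilde{x})=1$ by continuity, again contradicting antisymmetry. One then uses that a $\mathcal{G}_\delta$-point of a compact Hausdorff space has a countable neighborhood base. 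Nothing in your proposal plays the role of this lemma, which is used twice: once as the engine of the contradiction and once to guarantee $\tilde{x}\neq x$.
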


\begin{proof} Let $c$ be a closeness function for~$K$. We begin by proving the following:

{\em CLAIM. If $x_0\in K$ belongs to the closure of a countable set $D\subseteq K\setminus \{x_0\}$, 
then $x_0$ is a~$\mathcal{G}_\delta$-point.}

Indeed, for every $z\in D$ we have $c(x_0,x_0,z) = 1$, hence we can take an open neighborhood $V_z$ of~$x_0$ 
such that $z\not \in V_z$ and $c(x_0,x,z)>0$ for all $x\in V_z$. We claim that $\bigcap_{z\in D} V_{z} = \{x_0\}$. By contradiction, 
suppose there is $x\in \bigcap_{z\in D} V_{z} \setminus \{x_0\}$. Then $c(x_0,x,z)>0$ for all $z\in D$. 
Since $x_0\in \overline{D}$ and $c$ is continuous, we get $c(x_0,x,x_0)\geq 0$, which contradicts that $c(x_0,x,x_0)=-1$.
This proves the claim.

(i) Our proof is by contradiction. Suppose there is $x\in K$ which is not a~$\mathcal{G}_\delta$-point.
Then we construct a sequence $(x_n)$ in~$K \setminus \{x\}$
and a decreasing sequence $(H_n)$ of closed $\mathcal{G}_\delta$-sets containing~$x$ as follows:
\begin{itemize}
\item Pick an arbitrary $x_1\in K \setminus \{x\}$.
\item Given $n\in \mathbb{N}$, set $H_n:=\bigcap_{j=1}^n\{y\in K: c(y,x,x_j)=1\}$. Then
$H_n$ is a closed $\mathcal{G}_\delta$-set containing~$x$.
Since $x$ is not a $\mathcal{G}_\delta$-point,  
we can take $x_{n+1}\in H_n \setminus \{x\}$.
\end{itemize}
Now let $\tilde{x}$ be a cluster point of~$(x_n)$. 
By the CLAIM above, $\tilde{x}\neq x$.
Since $\tilde{x}\in H_n$ for every $n\in \mathbb{N}$, we have $c(\tilde{x},x,x_n)=1$
for every $n\in \N$. From the continuity of~$c$ it follows
that $c(\tilde{x},x,\tilde{x})=1$, which contradicts
that $c(\tilde{x},x,\tilde{x})=-1$.

(ii) It is enough to find a countable subset of $C(K)$ that separates the points of~$K$. 
Let $C$ be a countable dense subset of $K$. For every $t,s\in C$ with $t\neq s$, let 
$f_{t,s} \in C(K)$ be defined by $f_{t,s}(x) := c(x,t,s)$. 
Let us check that the countable family 
$$
	\{f_{t,s}\colon \, t,s\in C, \, t\neq s\}
$$ 
separates the points of~$K$. Fix $y\neq z$ in $K$. Since $c(y,y,z) = 1$, there are disjoint open 
sets $V_1,W_1 \subseteq K$ such that $y\in V_1$, $z\in W_1$ and 
$$
	c(x',y',z') > 0 \quad\mbox{for every }(x',y',z')\in V_1 \times V_1 \times W_1.
$$
On the other hand, since $c(z,y,z) = -1$, there are 
disjoint open sets $V_2,W_2 \subseteq K$ such that $y\in V_2$, $z\in W_2$ and 
$$
	c(x',y',z') < 0 \quad\mbox{for every }(x',y',z')\in W_2 \times V_2 \times W_2.
$$ 
Pick $t\in V_1\cap V_2\cap C$ and $s\in W_1\cap W_2\cap C$. Then $f_{t,s}(y) = c(y,t,s)>0$ while $f_{t,s}(z) = c(z,t,s) <0$.
The proof is over.
\end{proof}

By combining Lemma~\ref{pro:CompensationImpliesCloseness}, 
Proposition~\ref{separable}(ii) and Theorem~\ref{theo:CompactMetric} we get:

\begin{cor}\label{cor:almostCharacter}
Let $K$ be a compact space. The following statements are equivalent:
\begin{enumerate}
\item $K$ is separable and admits a~$B_{\dualC}$-compensation function; 
\item $K$ is metrizable.
\end{enumerate}
\end{cor}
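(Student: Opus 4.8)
The plan is to prove Corollary~\ref{cor:almostCharacter} by chasing the implications $(ii)\Rightarrow(i)\Rightarrow(ii)$ through results already established. For $(ii)\Rightarrow(i)$: if $K$ is metrizable, then it is separable (compact metric spaces are separable), and by Theorem~\ref{theo:CompactMetric} it admits a compensation function $\xi\colon\dualC\to\dualC$; restricting $\xi$ to $B_{\dualC}$ gives a $B_{\dualC}$-compensation function, since the defining properties of a compensation function (weak$^*$-weak$^*$-continuity and the pointwise compensation conditions) are clearly inherited by restriction to the smaller domain $B_{\dualC}$. This direction is immediate.

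For $(i)\Rightarrow(ii)$: assume $K$ is separable and admits a $B_{\dualC}$-compensation function. By Lemma~\ref{pro:CompensationImpliesCloseness}, $K$ then admits a closeness function. By Proposition~\ref{separable}(ii), a separable compact space admitting a closeness function is metrizable. Hence $K$ is metrizable, which is exactly $(ii)$.

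There is really no main obstacle here, as all the substantive work has been done in the preceding three results; the corollary is purely a matter of assembling them in the right order, together with the elementary observations that compact metric spaces are separable and that restricting a compensation function to $B_{\dualC}$ yields a $B_{\dualC}$-compensation function. The only point worth stating explicitly is that the hypothesis ``separable'' is used only in the direction $(i)\Rightarrow(ii)$ (to feed into Proposition~\ref{separable}(ii)), while in the converse direction separability is a \emph{consequence} of metrizability rather than an extra assumption. Thus the proof is a three-line citation chain, and I would write it simply as: ``$(ii)\Rightarrow(i)$ follows from Theorem~\ref{theo:CompactMetric} and the separability of compact metric spaces. $(i)\Rightarrow(ii)$ follows by combining Lemma~\ref{pro:CompensationImpliesCloseness} with Proposition~\ref{separable}(ii).''
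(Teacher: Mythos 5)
Your proof is correct and follows exactly the same citation chain the paper uses: the corollary is stated there as an immediate combination of Lemma~\ref{pro:CompensationImpliesCloseness}, Proposition~\ref{separable}(ii) and Theorem~\ref{theo:CompactMetric}. Your explicit remarks (separability of compact metric spaces, restriction of a compensation function to $B_{\dualC}$) are the same elementary observations the paper leaves implicit.
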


\subsection{One-point compactifications of discrete sets}\label{subsection:compactification}

Throughout this subsection $\Gamma$ is a non-empty set and we denote by 
$K:=A(\Gamma)=\Gamma\cup\{\infty\}$ 
the one-point compactification of $\Gamma$ equipped with the discrete topology. 
Since $K$ is scattered, every element of $\dualC[K]$ is of the form
$\sum_{t\in K}a_t\delta_t$ for some $(a_t)_{t\in K}\in \ell^1(K)$, \cite[Theorem~14.24]{fab-ultimo}.
It is well-known that a bounded net $(\mu_\alpha)$ in~$\dualC[K]$ is $\weak^*$-convergent 
to~$\mu \in \dualC[K]$ if and only if $\mu_\alpha(K) \to \mu(K)$
and $\mu_\alpha(\{\gamma\})\to\mu(\{\gamma\})$ for all $\gamma\in\Gamma$.
Note that if $\Gamma$ is uncountable, then $\infty$ is not
a~$\mathcal{G}_\delta$-point of~$K$ and so
Proposition~\ref{separable}(i) yields:

\begin{cor}\label{theo:AGammaUncountable}
If $\Gamma$ is uncountable set, then 
$A(\Gamma)$ does not admit a closeness function.
Hence, it neither admits a $B_{\dualC[A(\Gamma)]}$-compensation function.
\end{cor}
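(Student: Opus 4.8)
The plan is to invoke the previously established chain of implications. Recall that Lemma~\ref{pro:CompensationImpliesCloseness} says that if $K$ admits a $B_{\dualC}$-compensation function, then $K$ admits a closeness function. Hence it suffices to prove that $A(\Gamma)$ admits no closeness function when $\Gamma$ is uncountable, and then the non-existence of a $B_{\dualC[A(\Gamma)]}$-compensation function follows by contraposition.

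To rule out a closeness function, I would appeal to Proposition~\ref{separable}(i), which asserts that any compact space admitting a closeness function is first countable, equivalently, every point is a $\mathcal{G}_\delta$-point. So the only thing left is the purely topological observation that the distinguished point $\infty \in A(\Gamma)$ fails to be a $\mathcal{G}_\delta$-point when $\Gamma$ is uncountable. This is standard: a neighborhood of $\infty$ is exactly a set of the form $\{\infty\} \cup (\Gamma \setminus S)$ for some finite $S \subseteq \Gamma$, so $\{\infty\} = \bigcap_{n} G_n$ with $G_n$ open would force $\Gamma = \bigcup_n S_n$ with each $S_n$ finite, making $\Gamma$ countable, a contradiction. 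The excerpt already states this fact (\textquotedblleft if $\Gamma$ is uncountable, then $\infty$ is not a~$\mathcal{G}_\delta$-point of~$K$\textquotedblright), so I can simply cite it.

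Putting the pieces together: if $A(\Gamma)$ admitted a closeness function, then by Proposition~\ref{separable}(i) it would be first countable, so $\infty$ would be a $\mathcal{G}_\delta$-point, contradicting the fact that $\Gamma$ is uncountable. Therefore $A(\Gamma)$ admits no closeness function. Combining this with Lemma~\ref{pro:CompensationImpliesCloseness} (in contrapositive form), $A(\Gamma)$ admits no $B_{\dualC[A(\Gamma)]}$-compensation function either. There is no real obstacle here; the corollary is a one-line deduction from results already in place, and the only minor point to be careful about is stating the logical direction (contraposition) cleanly so that the reader sees both conclusions follow immediately.
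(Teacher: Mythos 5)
Your proposal is correct and follows exactly the paper's route: the paper likewise notes that $\infty$ is not a $\mathcal{G}_\delta$-point of $A(\Gamma)$ when $\Gamma$ is uncountable, applies Proposition~\ref{separable}(i) to exclude a closeness function, and then invokes Lemma~\ref{pro:CompensationImpliesCloseness} in contrapositive form for the second assertion. Nothing is missing.
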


However, we have the following:

\begin{theo}\label{theo:AGamma}
$A(\Gamma)$ admits local compensation and therefore $C(A(\Gamma))$ has the BPB property for numerical radius.
\end{theo}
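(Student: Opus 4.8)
The plan is to establish the stronger assertion that $A(\Gamma)$ admits local compensation; the ``BPB property'' conclusion then follows at once from Theorem~\ref{lBPBP-C(K)}. So fix $F\in W(K)$ and, for $\gamma\in\Gamma$, set $x_\gamma:=F(\cdot)(\{\gamma\})$ and $r:=F(\cdot)(K)$. Since every singleton $\{\gamma\}$ ($\gamma\in\Gamma$) is clopen, each $x_\gamma$ lies in $C(K)$; so does $r$, and $\sum_{\gamma\in\Gamma}|x_\gamma(t)|\le\|F(t)\|\le 1$ for all $t$, while $F(t)(\{\infty\})=r(t)-\sum_\gamma x_\gamma(t)$. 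The first point I would record is the membership criterion for $W(K)$ coming from the description of the $\weak^*$-topology on bounded subsets of $\dualC[A(\Gamma)]$ recalled above: a bounded map $H\colon K\to\dualC[K]$ is $\weak^*$-continuous if and only if $H(\cdot)(K)\in C(K)$ and $H(\cdot)(\{\gamma\})\in C(K)$ for every $\gamma\in\Gamma$ --- in particular nothing has to be verified about the coordinate at $\infty$. Thus it suffices to produce compensations $G(t)$ of $F(t)$ for which $t\mapsto G(t)(K)$ and all the $t\mapsto G(t)(\{\gamma\})$ ($\gamma\in\Gamma$) are continuous.

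The construction I propose is a ``water--filling'': fix a well-order $\prec$ of $\Gamma$ (to be chosen below), give $t$ the budget $r(t)^+:=\max\{0,r(t)\}$, and fill the positive parts $x_\gamma(t)^+$ one coordinate at a time along $\prec$, i.e. by transfinite recursion $G(t)(\{\gamma\}):=\min\bigl\{x_\gamma(t)^+,\ \bigl(r(t)^+-\sum_{\gamma'\prec\gamma}G(t)(\{\gamma'\})\bigr)^+\bigr\}$ (the sum converges, being $\le\sum_{\gamma'}x_{\gamma'}(t)^+\le 1$), and finally put $G(t)(\{\infty\}):=r(t)^+-\sum_{\gamma\in\Gamma}G(t)(\{\gamma\})$. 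That each $G(t)$ is a compensation of $F(t)$ is a routine verification: a standard induction gives $\sum_\gamma G(t)(\{\gamma\})=\min\{r(t)^+,F(t)^+(\Gamma)\}$, hence $G(t)(\{\infty\})=\max\{0,r(t)^+-F(t)^+(\Gamma)\}$, which is nonnegative and, using $r(t)^+\le F(t)^+(K)=F(t)^+(\Gamma)+\max\{0,F(t)(\{\infty\})\}$, is $\le\max\{0,F(t)(\{\infty\})\}$; together with $0\le G(t)(\{\gamma\})\le x_\gamma(t)^+$ this says $0\le G(t)\le F(t)^+$, while $G(t)(K)=r(t)^+$, so $G(t)$ is exactly a compensation of $F(t)$.

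The real content --- and the step I expect to be the main obstacle --- is the continuity of the coordinate functions $t\mapsto G(t)(\{\gamma\})$, and this hinges entirely on a clever choice of $\prec$. The subtle point is that partial sums such as $t\mapsto F(t)^+(\Gamma)=\sum_\gamma x_\gamma(t)^+$ need \emph{not} be continuous at $\infty$ (which is exactly why a uniform water-level, or the proportional compensation $\tfrac{\mu(K)}{\mu^+(K)}\mu^+$ of Remark~\ref{rem:ExistenceSingle}, fails to be $\weak^*$-continuous here). The fix: since $\sum_\gamma|x_\gamma(\infty)|\le1$, the set $\Gamma^+:=\{\gamma\in\Gamma:x_\gamma(\infty)>0\}$ is countable, so one can choose $\prec$ so that every element of $\Gamma^+$ $\prec$-precedes every element of $\Gamma\setminus\Gamma^+$ and every element of $\Gamma^+$ has only finitely many $\prec$-predecessors. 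With such a $\prec$: continuity at points of $\Gamma$ is automatic (they are isolated in $K$); for $\gamma\notin\Gamma^+$ one has $x_\gamma(\infty)\le 0$, so $x_\gamma(t)^+\to0$ as $t\to\infty$ and hence $0\le G(t)(\{\gamma\})\le x_\gamma(t)^+\to0=G(\infty)(\{\gamma\})$ with no need to control the budget; and for $\gamma\in\Gamma^+$ the recursion expresses $G(\cdot)(\{\gamma\})$ through $x_\gamma^+$, $r^+$ and the \emph{finitely many} continuous functions $G(\cdot)(\{\gamma'\})$ with $\gamma'\in\Gamma^+$, $\gamma'\prec\gamma$, so a finite induction along $\prec$ yields $G(\cdot)(\{\gamma\})\in C(K)$. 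This gives $G\in W(K)$; since $G$ is a compensation of $F$ and $F\in W(K)$ was arbitrary, $A(\Gamma)$ admits local compensation, and Theorem~\ref{lBPBP-C(K)} completes the proof.
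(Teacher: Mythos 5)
Your argument is correct, and it is genuinely different from the one in the paper. The paper's proof reduces the problem to the metrizable case: it isolates a countable set $B\subseteq K$ (built from the supports of $F(t)$ for $t$ in a countable exceptional set), applies Corollary~\ref{cor:LEMITA} --- and hence the whole Cantor-set machinery of Section~\ref{section3} --- to get a compensation on $B$, and then extends to the rest of $K$ by a case analysis (the sets $C$ and $K\setminus(B\cup C)$) using the constant value $\xi_0=G(\infty)$ on $C$ and a proportional rescaling of $F(t)^+$ on $K\setminus\Gamma_0$ elsewhere. Your water-filling construction is self-contained and bypasses Section~\ref{section3} entirely: the two ideas that carry it are (a) the reduction of $\weak^*$-continuity of a bounded map into $\dualC$ to continuity of the total mass and of the coordinates at points of $\Gamma$ only, and (b) the choice of a well-order putting the countable set $\Gamma^+=\{\gamma:F(\infty)(\{\gamma\})>0\}$ first in order type at most $\omega$, so that each coordinate $G(\cdot)(\{\gamma\})$ with $\gamma\in\Gamma^+$ is an explicit continuous expression in $x_\gamma^+$, $r^+$ and finitely many earlier coordinates, while for $\gamma\notin\Gamma^+$ continuity at $\infty$ comes for free from the squeeze $0\le G(t)(\{\gamma\})\le x_\gamma(t)^+\to 0$. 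The identity $\sum_\gamma G(t)(\{\gamma\})=\min\{r(t)^+,F(t)^+(\Gamma)\}$ and the resulting bounds on $G(t)(\{\infty\})$ check out, so each $G(t)$ is indeed a compensation. What each approach buys: the paper's route recycles Theorem~\ref{theo:CompactMetric} and is the template it then adapts to $[0,\omega_1]$; yours is shorter and more elementary for $A(\Gamma)$ specifically, though it leans on the very simple coordinate structure of $\dualC[A(\Gamma)]$ and would not transfer as directly to the ordinal-interval case.
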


\begin{proof} The second statement will follow from Theorem~\ref{lBPBP-C(K)}
once we prove the first one. Let $F\in W(K)$. 
If $F(\infty)(K)<0$, then there is a finite set $\Gamma_1 \subseteq \Gamma$ such that $F(t)(K)<0$ for all $t\in K \setminus \Gamma_1$
(because the function $F(\cdot)(K)\colon K\to \mathbb{R}$ is continuous). 
Fix an arbitrary compensation $\mu_t$ of~$F(t)$ for every $t\in \Gamma_1$ (apply Remark~\ref{rem:ExistenceSingle}).
Define $\xi_{F}\in W(K)$ by 
$$
	\xi_{F}(t):=
	\begin{cases}
		\mu_t & \text{if $t\in \Gamma_1$},\\
		0 & \text{if $t\in K \setminus \Gamma_1$.}
	\end{cases}
$$
Clearly, $\xi_{F}$ is a compensation of~$F$.

Suppose now that $F(\infty)(K)\geq 0$. Since the 
function $F(\cdot)(K)$ is continuous, there is a countable set $A\subseteq \Gamma$ such that
\begin{equation}\label{lcond-estability}
	F(t)(K)=F(\infty)(K) \quad\text{for every }t\in K\setminus A.
\end{equation}
For every $t\in K$ the set $A_t:=\supp(F(t))$ is countable. Write 
$$
	\Gamma_0:=A_\infty\cap\Gamma=A_\infty\setminus \{\infty\}.
$$ 
For each $s\in\Gamma_0$, the function $F(\cdot)(\{s\})\colon K \to \mathbb{R}$ is continuous
(because $\{s\}$ is a clopen subset of~$K$) and so there is a countable set $B_s\subseteq K$ such that
\begin{equation}\label{lcond-evaluate}
	F(t)(\{s\})=F(\infty)(\{s\}) \quad \text{for every }t\in K\setminus B_s.
\end{equation}
The set $B:=(\bigcup_{s\in\Gamma_0}B_s)\cup A\cup\{\infty\}$ is countable, hence so is $\bigcup_{t\in B} A_t$
and therefore
$$
	N:=\overline{\bigcup_{t\in B} A_t}
$$ 
is a compact metrizable (countable) subset of~$K$. Observe that for every $t\in B$ we have
$\supp(F(t))\subseteq N$. An appeal to Corollary~\ref{cor:LEMITA}
ensures the existence of a $\weak^*$-continuous function $G\colon B \to \dualC$ 
such that $G(t)$ is a compensation of~$F(t)$ for every $t\in B$.
Write $\xi_0:=G(\infty)$. Let us define  
\begin{equation}\label{eq:definicionC}
C:=\left\{t\in K\setminus B\colon F(t)(\Gamma \setminus A_\infty)\leq 0\right\}
\end{equation}
and the mapping $\xi_F\colon K\to\dualC$ by 
\begin{equation}\label{eq:definicionXi}
	\xi_F(t)(\{s\}):=
        \begin{cases}
        	G(t)(\{s\}) &\text{ if } t\in B\text{ and }s\in K,\\
			\xi_0(\{s\}) &\text{ if }t\in C\text{ and }s\in K,\\ 
			\xi_0(\{s\})&\text{ if }t\notin B\cup C\text{ and }s\in\Gamma_0,\\
            \frac{\xi_0(\{\infty\})}{F(t)^+(K \setminus \Gamma_0)}F(t)^+(\{s\})&\text{ if } t\notin B\cup C\text{ and }s\in K \setminus \Gamma_0.
        \end{cases}
\end{equation}
Observe that $K \setminus \Gamma_0 \supseteq \Gamma \setminus A_\infty$, hence
$F(t)^+(K \setminus \Gamma_0)\geq F(t)(\Gamma \setminus A_\infty)>0$ whenever $t\in K \setminus (B\cup C)$, so $\xi_F$ is well-defined. 
Let us show that $\xi_F$ is a compensation of~$F$.

\emph{STEP 1. $\xi_F(t)$ is a compensation of~$F(t)$ for every $t\in K$.} 

This is obvious for $t\in B$ by the choice of~$G$.
In particular, $\xi_0(K)=F(\infty)(K)\geq 0$ and $0\leq \xi_0 \leq F(\infty)^+$.
Let us analyze what happens for $t\in K\setminus B$. 

\emph{Case 1:} Assume $t\in C$. Then $t\notin B\supseteq A$, so
$$
	\xi_F(t)(K)\stackrel{\eqref{eq:definicionXi}}{=}\xi_0(K)=F(\infty)(K)\stackrel{\eqref{lcond-estability}}{=}F(t)(K).
$$
On the other hand, take any $s\in K$. Then 
$$
	0\leq \xi_F(t)(\{s\})\stackrel{\eqref{eq:definicionXi}}{=}\xi_0(\{s\})\leq F(\infty)^+(\{s\}).
$$
We now distinguish several cases.
\begin{itemize}
\item If $s\in\Gamma_0$, then \eqref{lcond-evaluate} implies that $F(\infty)^+(\{s\})=F(t)^+(\{s\})$.
\item If $s\notin A_\infty$, then $F(\infty)^+(\{s\})=0 \leq F(t)^+(\{s\})$. 
\item If $\infty \in A_\infty$, then
\begin{equation}
	\begin{split}
	F(\infty)(\{\infty\})&=F(\infty)(K)-\sum_{r\in \Gamma_0}F(\infty)(\{r\})
	\stackrel{\eqref{lcond-estability}\&\eqref{lcond-evaluate}}=F(t)(K)-\sum_{r\in\Gamma_0} F(t)(\{r\})=\\
	&=F(t)(\{\infty\})+F(t)(\Gamma\setminus A_\infty)\stackrel{\eqref{eq:definicionC}}{\leq} F(t)(\{\infty\}).
\end{split}
\end{equation}
\end{itemize}
It follows that
$$
	0\leq \xi_F(t)(\{s\})\leq F(\infty)^+(\{s\})\leq F(t)^+(\{s\}) \quad \text{for all } s\in K,
$$
hence $0\leq \xi_F(t)\leq F(\infty)^+\leq F(t)^+$. Therefore, $\xi_F(t)$ is a compensation of~$F(t)$.

\emph{Case 2:} Assume $t\in K\setminus (B\cup C)$. Then
\begin{equation*}
	\begin{split}
		\xi_F(t)(K)=
		&\sum_{s\in K}\xi_F(t)(\{s\})
		\stackrel{\eqref{eq:definicionXi}}{=}\sum_{s\in \Gamma_0}\xi_0(\{s\})+
		\frac{\xi_0(\{\infty\})}{F(t)^+(K \setminus \Gamma_0)}\sum_{s\in K \setminus \Gamma_0}F(t)^+(\{s\})\\
		=&\xi_0(\Gamma_0)+\xi_0(\{\infty\})=\xi_0(\Gamma_0\cup\{\infty\})=\xi_0(K)=F(\infty)(K)
		\stackrel{\eqref{lcond-estability}}{=}F(t)(K).
	\end{split}
\end{equation*}
To prove that $\xi_F(t)$ is a compensation of~$F(t)$
it remains to check that $0\leq\xi_F(t)\leq F(t)^+$, which is equivalent to saying that 
$0\leq\xi_F(t)(\{s\})\leq F(t)^+(\{s\})$ for all $s\in K$.
To this end, we distinguish two cases:
\begin{itemize} 
\item If $s\in\Gamma_0$, then 
\[
	0\leq \xi_F(t)(\{s\})\stackrel{\eqref{eq:definicionXi}}{=}\xi_0(\{s\})\leq F(\infty)^+(\{s\})
	=\stackrel{\eqref{lcond-evaluate}}{=}F(t)^+(\{s\}).
\]

\item If $s\in K \setminus \Gamma_0$, then  
\begin{equation}\label{eqn:Jose2}
	0\leq \xi_F(t)(\{s\})\stackrel{\eqref{eq:definicionXi}}{=}
	\frac{\xi_0(\{\infty\})}{F(t)^+(K \setminus \Gamma_0)}F(t)^+(\{s\})\leq
	F(t)^+(\{s\}),
\end{equation}
because 
\[
	F(\infty)(\{\infty\})=
	F(\infty)(K)-\sum_{r\in \Gamma_0}F(\infty)(\{r\})
	\stackrel{\eqref{lcond-estability}\&\eqref{lcond-evaluate}}{=}
	F(t)(K\setminus \Gamma_0)
\]
yields the inequalities $\xi_0(\{\infty\})\leq F(\infty)^+(\{\infty\})\leq F(t)^+(K\setminus \Gamma_0)$.
\end{itemize}

\emph{STEP 2. $\xi_F$ is $\weak^*$-continuous.}
It suffices to check that $\xi_F$ is $\weak^*$-continuous when restricted to each of the closed sets
$B$, $\overline{C}$ and $\overline{K \setminus (B\cup C)}$. We 
already know that the restriction $\xi_F|_B=G$ is $\weak^*$-continuous. 
On the other hand, note that $\xi_F(t)=\xi_F(\infty)=\xi_0$ for every $t\in \overline{C}$, hence 
$\xi_F|_{\overline{C}}$ is $\weak^*$-continuous. 

Finally, let us show that $\xi_F|_{\overline{K\setminus(B\cup C)}}$ is also $\weak^*$-continuous. 
To this end, it suffices to show that, if $(t_\alpha)$ is a net in~$K\setminus(B\cup C)$ converging to~$\infty$, 
then $\xi_F(t_\alpha) \to \xi_F(\infty)=\xi_0$ with respect to the $\weak^*$-topology
of~$\dualC$, which is equivalent to saying that
$$
	\xi_F(t_\alpha)(K) \to \xi_0(K)
	\quad\mbox{and}\quad
	\xi_F(t_\alpha)(\{s\})\to\xi_0(\{s\})\text{ for all }s\in\Gamma
$$
(note that $(\xi_F(t_\alpha))$ is bounded, because $\xi_F(t)$ is a compensation of
$F(t)\in B_{\dualC}$ for every $t\in K$).
We know that $\xi_F(t)(K)=F(t)(K)$ for all $t\in K\setminus (B\cup C)$ 
(see the proof of Step~1), hence $\xi_F(t_\alpha)(K)=F(t_\alpha)(K) \to F(\infty)(K)=\xi_0(K)$.

Fix any $s\in\Gamma$. If $s\in\Gamma_0$, then $\xi_F(t_\alpha)(\{s\})=\xi_0(\{s\})=\xi_F(\infty)(\{s\})$
for all~$\alpha$. If $s\in \Gamma \setminus \Gamma_0$, then by \eqref{eqn:Jose2} we have
$$	
	0\leq \xi_F(t_\alpha)(\{s\})\leq F(t_\alpha)^+(\{s\}) \quad
	\mbox{for all }\alpha.
$$
Since $F(t_\alpha)^+(\{s\}) \to F(\infty)^+(\{s\})=0$ (because $s\notin A_\infty$), it follows that
$$
	\xi_F(t_\alpha)(\{s\})\to 0=\xi_0(\{s\}).
$$
This proves that $\xi_F$ is $\weak^*$-continuous and so 
$\xi_F$ is a compensation of~$F$. 
\end{proof}

\subsection{Ordinal intervals}~\label{subsection:ordinal}
Throughout this subsection we work with the ordinal interval $K:=[0,\omega_1]$,
which becomes a $0$-dimensional scattered compact space when equipped with its order topology.
Since $K$ is scattered, every element of $\dualC[K]$ is of the form
$\sum_{\alpha\in K}a_\alpha\delta_\alpha$ for some $(a_\alpha)_{\alpha\in K}\in \ell^1(K)$, \cite[Theorem~14.24]{fab-ultimo}.
We shall also need the following well-known fact, see e.g. \cite[3.1.27]{eng-J}.

\begin{lem}\label{constantomega1}
If $h\colon [0,\omega_1)\to \mathbb{R}$ is a continuous function, then there is $\alpha<\omega_1$ such that $h$ is constant on $[\alpha,\omega_1)$.
\end{lem}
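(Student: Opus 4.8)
The plan is to show that for each $\varepsilon>0$ the oscillation of $h$ on some final segment $[\alpha_\varepsilon,\omega_1)$ is at most $\varepsilon$, and then to merge countably many such final segments into a single one on which $h$ is constant. This is of course classical (as the cited reference indicates), but the proof I would write is short and self-contained. The only ingredient beyond elementary facts about the order topology is that $\omega_1$ is regular, i.e. that the supremum of countably many countable ordinals is again a countable ordinal; this is exactly what guarantees that the ``limit points'' produced along the way remain inside the domain $[0,\omega_1)$ of~$h$.

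\emph{Step 1.} Fix $\varepsilon>0$; I claim there is $\alpha_\varepsilon<\omega_1$ with $\diam\big(h([\alpha_\varepsilon,\omega_1))\big)\le\varepsilon$. Arguing by contradiction, if no such $\alpha_\varepsilon$ exists, then for every $\alpha<\omega_1$ there are ordinals $\beta,\gamma\ge\alpha$ with $|h(\beta)-h(\gamma)|>\varepsilon$. Applying this repeatedly I would build a strictly increasing sequence $\gamma_0<\gamma_1<\gamma_2<\cdots$ of countable ordinals such that $|h(\gamma_{2k})-h(\gamma_{2k+1})|>\varepsilon$ for every~$k$ (at stage $2k$, apply the assumption to $\alpha:=\gamma_{2k-1}+1$ and relabel the resulting pair as $\gamma_{2k}\le\gamma_{2k+1}$; only finitely many ordinals have been chosen so far, so this never gets stuck). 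Then $\gamma:=\sup_k\gamma_k$ is a countable limit ordinal, the sequence $(\gamma_k)_k$ converges to~$\gamma$ in the order topology, and continuity of~$h$ forces $h(\gamma_k)\to h(\gamma)$; in particular $(h(\gamma_k))_k$ is Cauchy, contradicting $|h(\gamma_{2k})-h(\gamma_{2k+1})|>\varepsilon$.

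\emph{Step 2.} For each $n\in\N$ pick $\alpha_n:=\alpha_{1/n}<\omega_1$ from Step~1 and put $\alpha:=\sup_{n\in\N}\alpha_n$, which satisfies $\alpha<\omega_1$ because $\omega_1$ is regular. Since $[\alpha,\omega_1)\subseteq[\alpha_n,\omega_1)$ for every~$n$, we get $\diam\big(h([\alpha,\omega_1))\big)\le 1/n$ for all~$n$, hence this diameter equals~$0$ and $h$ is constant on $[\alpha,\omega_1)$, as desired. The whole argument is routine; the one genuinely essential point — and precisely the place where the statement would fail for a limit ordinal of countable cofinality — is that a countable strictly increasing sequence of ordinals below $\omega_1$ has its supremum still below $\omega_1$, so that the points $\gamma$ of Step~1 and $\alpha$ of Step~2 lie in the domain $[0,\omega_1)$, where continuity of~$h$ may legitimately be invoked. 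Apart from that, one only needs the elementary fact that such a sequence converges to its supremum in the order topology.
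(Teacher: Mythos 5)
Your proof is correct. Note that the paper itself offers no proof of this lemma; it simply cites it as a well-known fact from Engelking (3.1.27), so there is nothing to compare against. Your two-step argument (first bounding the oscillation on a final segment for each $\varepsilon$, then intersecting countably many such segments using the regularity of $\omega_1$) is the standard self-contained proof, and both appeals to regularity — keeping $\gamma=\sup_k\gamma_k$ and $\alpha=\sup_n\alpha_n$ below $\omega_1$ — are exactly where they need to be. The only cosmetic remark is that in Step~1 the relabelled pair satisfies $\gamma_{2k}<\gamma_{2k+1}$ strictly (the two points must be distinct since $|h(\beta)-h(\gamma)|>\varepsilon>0$), which is what you need for the sequence to be strictly increasing; this is implicit in what you wrote.
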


Since $\omega_1$ is not a $\mathcal{G}_\delta$-point of~$K$,
an appeal to Proposition~\ref{separable}(i) yields:

\begin{cor}\label{pro:ordinal}
$[0,\omega_1]$ does not admit a closeness function. Hence, it neither admits a 
$B_{\mathcal{M}([0,\omega_1])}$-compensation function.
\end{cor}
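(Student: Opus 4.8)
The plan is to reduce the statement to Proposition~\ref{separable}(i) together with Lemma~\ref{pro:CompensationImpliesCloseness}, exactly as was done for $A(\Gamma)$ with $\Gamma$ uncountable in Corollary~\ref{theo:AGammaUncountable}. The only point that requires a (very short) verification is that $\omega_1$ is not a $\mathcal{G}_\delta$-point of $K=[0,\omega_1]$: every open neighborhood of $\omega_1$ contains a tail $(\beta,\omega_1]$ for some $\beta<\omega_1$, so a countable family of open neighborhoods of $\omega_1$ has intersection containing the tail $(\gamma,\omega_1]$, where $\gamma<\omega_1$ is the supremum of the corresponding ordinals; hence $\{\omega_1\}$ cannot be written as a countable intersection of open sets. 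Alternatively, this is immediate from Lemma~\ref{constantomega1}. In particular $\omega_1$ has no countable neighborhood base in the compact Hausdorff space $K$, so $K$ is not first countable.

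From here I would argue by contradiction. If $K$ admitted a closeness function, then Proposition~\ref{separable}(i) would force $K$ to be first countable, contradicting the previous paragraph; hence $K$ admits no closeness function. For the second assertion, if $K$ admitted a $B_{\mathcal{M}([0,\omega_1])}$-compensation function, then Lemma~\ref{pro:CompensationImpliesCloseness} would produce a closeness function for~$K$, again a contradiction; hence $K$ admits no $B_{\mathcal{M}([0,\omega_1])}$-compensation function either.

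I do not anticipate any genuine difficulty, since all the substantial work is contained in Proposition~\ref{separable} and Lemma~\ref{pro:CompensationImpliesCloseness}; the only care needed is to invoke the contrapositive of Proposition~\ref{separable}(i) correctly and to recall the classical fact that $\{\omega_1\}$ is not a $\mathcal{G}_\delta$ set in $[0,\omega_1]$.
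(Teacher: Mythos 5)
Your proposal is correct and follows exactly the paper's route: the paper likewise observes that $\omega_1$ is not a $\mathcal{G}_\delta$-point of $[0,\omega_1]$ and then invokes Proposition~\ref{separable}(i) and Lemma~\ref{pro:CompensationImpliesCloseness}, just as in Corollary~\ref{theo:AGammaUncountable}. Your explicit verification that $\{\omega_1\}$ is not $\mathcal{G}_\delta$ (a countable intersection of neighborhoods contains a tail) is the standard fact the paper leaves implicit.
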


On the other hand, we have:

\begin{theo}\label{theo:ordinalAA}
$[0,\omega_1]$ admits local compensation and therefore $C([0,\omega_1])$ has the BPB property for numerical radius.
\end{theo}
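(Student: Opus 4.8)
The plan is to prove that $K:=[0,\omega_1]$ admits local compensation; the assertion about the BPB property for numerical radius is then immediate from Theorem~\ref{lBPBP-C(K)}. So fix $F\in W(K)$ and let $g\colon K\to\bR$ be the continuous function $g(t):=F(t)(K)$. The two features of the ordinal I shall exploit, besides Lemma~\ref{constantomega1}, are: every proper initial segment $[0,\gamma]$ ($\gamma<\omega_1$) is a clopen, countable, metrizable subset of $K$; and, for bounded nets, $\weak^*$-convergence in $\mathcal M(K)$ is detected by the masses of these initial segments together with the total mass (so it is ``blind'' to the atom at $\omega_1$). The first fact makes the restriction $\mathcal M(K)\to\mathcal M([0,\gamma])$ $\weak^*$-$\weak^*$-continuous and the bounded part of $(\mathcal M([0,\gamma]),\weak^*)$ metrizable; hence for each $\gamma<\omega_1$ the composition of $F$ with this restriction is a continuous map from $K$ into a metric space and therefore, by the routine metric-valued strengthening of Lemma~\ref{constantomega1}, is eventually equal to $F(\omega_1)|_{[0,\gamma]}$; likewise $g$ is eventually equal to $g(\omega_1)$.

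If $g(\omega_1)\leq 0$, pick $\alpha<\omega_1$ with $g(t)=g(\omega_1)$ for all $t\in[\alpha,\omega_1]$; the only compensation of such $F(t)$ is $0$. As $[0,\alpha]$ is a countable set, $\bigcup_{t\in[0,\alpha]}\supp(F(t))$ is countable, hence contained in a compact metrizable $L\subseteq K$, and Corollary~\ref{cor:LEMITA} provides a $\weak^*$-continuous $G\colon[0,\alpha]\to\mathcal M(K)$ with $G(t)$ a compensation of $F(t)$; note $G(\alpha)=0$. Then $\xi_F:=G$ on $[0,\alpha]$ and $\xi_F:=0$ on $(\alpha,\omega_1]$ is a compensation of $F$, and is $\weak^*$-continuous because $[0,\alpha]$ and $(\alpha,\omega_1]$ are clopen.

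Assume now $g(\omega_1)>0$; this is the heart of the matter. A pressing-down argument applied to the countable ordinals witnessing the stabilizations above yields a closed unbounded $C=\{c_\xi\colon\xi<\omega_1\}\subseteq[0,\omega_1)$ (enumerated increasingly and continuously) with $\beta_0:=c_0>\sup(\supp(F(\omega_1))\cap[0,\omega_1))$ such that $g(t)=g(\omega_1)$ and $F(t)|_{[0,\beta]}=F(\omega_1)|_{[0,\beta]}$ for all $\beta\in C$ and all $t\geq\beta$. This splits $[\beta_0,\omega_1)$ into the disjoint blocks $[c_\xi,c_{\xi+1})$ (each clopen except for its left endpoint when $\xi$ is a limit) and gives, on $(\beta_0,\omega_1]$, $F(t)=\nu_0+R(t)$ with $\nu_0:=F(\omega_1)|_{[0,\beta_0]}$ fixed and $R(t):=F(t)|_{(\beta_0,\omega_1]}$; here $R(\cdot)$ is $\weak^*$-continuous, $R(t)((\beta_0,\omega_1])\equiv c:=F(\omega_1)(\{\omega_1\})$, $R(\omega_1)=c\,\delta_{\omega_1}$, and $R(t)$ is supported in $(c_\xi,\omega_1]$ whenever $t\geq c_\xi$. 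On a single block only countably many $t$ occur, so the relevant supports of $R(\cdot)$ there lie in a compact metrizable set and Corollary~\ref{cor:LEMITA} applies; a transfinite recursion along $C$ assembles these local compensations of $R$ into one $\weak^*$-continuous compensation $\eta$ of $R$ on $(\beta_0,\omega_1]$ (with $\eta(\omega_1)=c\,\delta_{\omega_1}$ forced when $c>0$, and $\eta\equiv 0$ when $c\leq 0$). Finally put $\xi_F:=G$ on the clopen metrizable set $[0,\beta_0]$ (again by Corollary~\ref{cor:LEMITA}, the supports of $F(\cdot)|_{[0,\beta_0]}$ being confined to a compact metrizable set) and $\xi_F(t):=p+\frac{b}{c^+}\,\eta(t)$ on $(\beta_0,\omega_1]$, where $c^+:=\max\{c,0\}$, $a:=\max\{0,g(\omega_1)-c^+\}$, $b:=g(\omega_1)-a=\min\{g(\omega_1),c^+\}$, $p:=\frac{a}{\nu_0^+([0,\beta_0])}\nu_0^+$, and $\frac{b}{c^+}\eta(t)$ is read as $0$ when $c\leq 0$. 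Using $\nu_0\perp R(t)$ one checks directly that $\xi_F(t)$ is a compensation of $F(t)$ for every $t\in K$ (and of $F(\omega_1)$ at $t=\omega_1$), so $\xi_F\in W(K)$ is a compensation of $F$, the two pieces $[0,\beta_0]$ and $(\beta_0,\omega_1]$ being clopen.

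The one serious obstacle is the $\weak^*$-continuity of $\eta$ (equivalently of $\xi_F|_{(\beta_0,\omega_1]}$) at $\omega_1$ and at the limit points $c_\lambda$ of $C$ --- the only points where the block decomposition is not locally clopen. Continuity at $\omega_1$ is easy once one uses that $\weak^*$-convergence in $\mathcal M(K)$ ignores the atom at $\omega_1$ and that the stabilization forces $R(t)$, hence the $\eta(t)$-term (which is $\leq R(t)^+$), to vanish on $[0,\gamma]$ as soon as $t$ exceeds a countable ordinal depending on $\gamma$. Continuity at a limit $c_\lambda$ is delicate: for $t_i\uparrow c_\lambda$ one must show $\eta(t_i)((\beta_0,c_\lambda])\to 0$, and since the positive-part map $\mu\mapsto\mu^+$ is not $\weak^*$-continuous --- not even on metrizable compacta, which is exactly why the local steps must be routed through Corollary~\ref{cor:LEMITA}/the Cantor compensation function of Section~\ref{section3} rather than through a crude rescaling of $R(t)^+$ --- the quantity $R(t_i)^+((\beta_0,c_\lambda])$ need not tend to $0$. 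The recursion therefore has to be organised so that $\eta$ keeps its mass ``as far to the right as possible'', which forces $\eta(t_i)((\beta_0,c_\lambda])\leq\max\{0,R(t_i)((\beta_0,c_\lambda])\}\to 0$ (the last limit holding because $R(t_i)\to R(c_\lambda)$ and $R(c_\lambda)$ is supported in $(c_\lambda,\omega_1]$). Showing that such a coherent, right-pushing choice can be made block by block while preserving $\weak^*$-continuity is where the real work lies.
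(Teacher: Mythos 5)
Your overall strategy --- stabilize $F$ along a club of countable ordinals, split $F(t)$ into a fixed part plus a moving part, compensate the moving part block by block via Corollary~\ref{cor:LEMITA}, and reassemble with the mass bookkeeping $a$, $b$, $p$, $b/c^+$ --- is essentially the strategy of the paper, and your continuity argument at $\omega_1$ is sound. But there is a genuine gap exactly where you say ``the real work lies'': you never construct the ``right-pushing'' compensation $\eta$ of $R$, and nothing in your setup forces the block-wise compensations to fit together $\weak^*$-continuously at the limit points $c_\lambda$ of the club, nor even at the successor boundaries $c_{\xi+1}$ (these are typically limit ordinals, so $[c_\xi,c_{\xi+1})$ is not closed, and the left-limit of $\eta$ at $c_{\xi+1}$ must be shown to equal the value chosen by the next block; this is not automatic because compensations of $R(c_{\xi+1})$ are far from unique --- $R(c_{\xi+1})$ is merely supported in $(c_{\xi+1},\omega_1]$ and can still have a nontrivial negative part there). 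As you yourself observe, any greedy ``push the mass to the right'' rule involves evaluating $R(t)^+$ on specific sets and is therefore not $\weak^*$-continuous; asserting that the recursion ``can be organised'' to reconcile right-pushing with continuity is the whole theorem, not a detail.

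The paper closes this gap by choosing the $\omega_1$-sequence more carefully: each $\lambda_{i+1}$ is the supremum of an interleaved chain $\alpha_0<\beta_0<\alpha_1<\beta_1<\cdots$ alternating between the stabilization ordinals $\beta(\cdot)$ (your club) and the support bounds $s(\cdot)$ of the measures $F(\gamma)$. This has two effects your decomposition lacks: the moving part $\mu_\alpha:=F(\alpha)-\mu-b\delta_{\omega_1}$ is concentrated on the single closed block $[\lambda_i,\lambda_{i+1}]$ (it does not drag along the whole tail above the block, unlike your $R(t)$, which always lives on all of $(c_\xi,\omega_1]$), and, crucially, at every block endpoint $\mu_{\lambda_i}$ degenerates to the single atom $a\delta_{\lambda_i}$. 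An atom has a unique compensation, so the block-wise compensations produced by Corollary~\ref{cor:LEMITA} automatically agree at the endpoints, and at limit $\lambda$'s the shrinking supports together with the constant total mass force $\weak^*$-convergence to $\max\{a,0\}\,\delta_{\lambda}$. That atom-collapse is the mechanism that replaces your unproved ``coherent right-pushing choice''; without it, or some substitute for it, the assembly step does not go through.
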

\begin{proof} The second statement will follow from Theorem~\ref{lBPBP-C(K)}
once we prove the first one. Write $Clop(K)$ to denote the algebra of all clopen subsets of~$K$.
Let $F\in W(K)$. For every $\alpha\in K$, define
\begin{equation}\label{eqn:salpha}
	s(\alpha) := \sup\{\gamma<\omega_1\colon F(\alpha)(\{\gamma\}) \neq 0\} < \omega_1.
\end{equation} 

{\sc Claim 1.} There exist $b\in [-1,1]$ and $\gamma_0<\omega_1$ such that 
\begin{equation}\label{eqn:AfterGamma0}
	F(\gamma)(\{\omega_1\}) = b
	\quad\mbox{whenever }\gamma_0\leq \gamma <\omega_1.
\end{equation} 

Proof of Claim 1. By Lemma~\ref{constantomega1}, we only have to check that
the function $F(\cdot)(\{\omega_1\})$ is continuous on $[0,\omega_1)$. To this end, it is enough to prove the continuity 
on $[0,\gamma)$ for every $\gamma<\omega_1$. Let $\beta := \sup\{s(\alpha) \colon \alpha<\gamma\}<\omega_1$. Notice that for every $\alpha<\gamma$ we have
$$
	F(\alpha)([\beta+1,\omega_1])=\sum_{\beta < \gamma \leq \omega_1}F(\alpha)(\{\gamma\})\stackrel{\eqref{eqn:salpha}}{=}
	F(\alpha)(\{\omega_1\}). 
$$ 
Since $[\beta+1,\omega_1]\in Clop(K)$ and $F$ is $\weak^*$-continuous, the previous equality
ensures that the function $F(\cdot)(\{\omega_1\})$ is continuous on $[0,\gamma)$.
This finishes the proof of Claim~1.\qed

{\sc Claim 2.} For every $\alpha<\omega_1$ there is $\alpha < \beta(\alpha) < \omega_1$ such that $F(\gamma)(A) = F(\omega_1)(A)$ 
for every $\beta(\alpha) \leq \gamma < \omega_1$ and every $A\in Clop(K)$ such that 
$A\subseteq [0,\alpha]$ or $A=[0,\omega_1]$.

Proof of Claim 2. Note that the set 
$$
	\mathcal{A}_\alpha:=\{A\in Clop(K)\colon \, A\subseteq [0,\alpha]\}\cup\{[0,\omega_1]\}
$$ 
is countable (because $[0,\alpha]$ is a compact metric space and so it has countably many clopen subsets).
For every $A\in Clop(K)$ the function $F(\cdot)(A)\colon [0,\omega_1] \to \mathbb{R}$ is continuous, 
hence it is constant on $[\beta_A,\omega_1]$ for some $\alpha < \beta_A<\omega_1$ (apply Lemma~\ref{constantomega1}).
Now, the proof of Claim~2 finishes by taking $\beta(\alpha):=\sup\{\beta_A\colon A\in \mathcal{A}_\alpha\} < \omega_1$.\qed

{\sc Definition.}
We next define by transfinite induction a strictly increasing $\omega_1$-sequence of ordinals 
$\{\lambda_i \colon \, i<\omega_1\} \subseteq [0,\omega_1)$.  
For convenience, we consider 
$$
	\lambda_{0}:=\max\{s(\omega_1),\gamma_0,\beta(0)\} < \omega_1
$$ 
as the starting point of the induction. 
If $i<\omega_1$ is a limit ordinal, then we set 
$$
	\lambda_i := \sup\{\lambda_j \colon j<i\}.
$$ 
In the successor case, $\lambda_{i+1}$ is defined as 
$$
	\lambda_{i+1} := \sup\{\alpha_n \colon \, n\in \N\}= \sup\{\beta_n \colon \, n\in \N\}
$$ 
where $\lambda_i =: \alpha_0 < \beta_0 < \alpha_1 < \beta_1 < \ldots < \lambda_{i+1}$
are defined as 
\begin{equation}\label{eqn:defi-betan}
	\beta_n := \beta(\alpha_n) \quad \mbox{(given by Claim~2)}
\end{equation}
and
\begin{equation}\label{eqn:definalphan}
	\alpha_n := \max\big\{\beta_{n-1}+1,\sup\{s(\gamma) \colon \, \gamma\leq\beta_{n-1}\}\big\}.
\end{equation}

{\sc Definition.} We set
$$
	\mu:=F(\omega_1)-F(\omega_1)(\{\omega_1\})\delta_{\omega_1}\in \dualC,
$$
\begin{equation}\label{defmualpha}
	\mu_\alpha:= F(\alpha) - \mu - b\delta_{\omega_1}=F(\alpha)-F(\omega_1)+a\delta_{\omega_1}\in \dualC, \quad
	\alpha\in [0,\omega_1],
\end{equation}
where we write $a := F(\omega_1)(\{\omega_1\})-b$.

{\sc Claim 3.} If $i <\omega_1$ and $\alpha\in (\lambda_i,\lambda_{i+1})$, then 
$\mu_\alpha$ is concentrated on $[\lambda_i,\lambda_{i+1}]$ with $\mu_\alpha(K)=a$.

Proof of Claim 3. 
By \eqref{eqn:AfterGamma0} (bear in mind that $\alpha>\lambda_{0}\geq \gamma_0$) we have 
\begin{equation}\label{eqn:final}
	F(\alpha)(\{\omega_1\}) = b.
\end{equation}
Let $\lambda_i = \alpha_0 < \beta_0 < \alpha_1 < \beta_1 < \ldots < \lambda_{i+1}$ be the 
sequence of ordinals that defines $\lambda_{i+1} = \sup\{\alpha_n\colon n\in \N\}=\sup\{\beta_n\colon n\in \N\}$. 
Pick $n\in \N$ such that $\alpha\leq\beta_{n-1}$. By~\eqref{eqn:definalphan} we have $s(\alpha)\leq \alpha_n<\lambda_{i+1}$, and so 
\begin{equation}\label{eqn:cola}
	F(\alpha)(\{\gamma\})=0
	\quad\mbox{for every }\lambda_{i+1} < \gamma<\omega_1.
\end{equation} 
Note that we also have
\begin{equation}\label{eqn:cola2}
	F(\omega_1)(\{\gamma\})=0
	\quad\mbox{for every }\lambda_{i+1} < \gamma<\omega_1,
\end{equation} 
because $\lambda_{i+1}>\lambda_{0}\geq s(\omega_1)$. We next prove that
\begin{equation}\label{eqn:inicio}
	F(\alpha)(\{\gamma\}) = F(\omega_1)(\{\gamma\}) \quad \mbox{for every }\gamma < \lambda_{i}.
\end{equation}
To this end, it suffices to check the equality for every $\gamma<\lambda_{j+1}$ and every
ordinal $j<i$. Let $\lambda_j = \alpha'_0 < \beta'_0 < \alpha'_1 < \beta'_1 < \ldots < \lambda_{j+1}$ 
be the sequence of ordinals that defines $\lambda_{j+1}= \sup\{\alpha'_n\colon n\in \N\}=\sup\{\beta'_n\colon n\in \N\}$. Then
$\gamma < \alpha'_n$ for some $n\in \N$ and so 
we can write $\{\gamma\}=\bigcap_{k\in \N}A_k$ for some decreasing sequence $(A_k)$ in $Clop(K)$
with $A_k \subseteq [0,\alpha'_n]$ for every $k\in \N$. Indeed, this is obvious if $\gamma=0$, while
for $\gamma\neq 0$ we have $\{\gamma\}=\bigcap\{[\gamma_1+1,\gamma_2]\colon\gamma_1<\gamma\leq \gamma_2<\alpha'_n\}$.
By the choice of $\beta(\alpha'_n)$ (Claim~2) and 
$$
	\beta(\alpha'_n)\stackrel{\eqref{eqn:defi-betan}}{=}\beta'_n < \lambda_{j+1} \leq \lambda_i < \alpha,
$$ 
we have $F(\alpha)(A_k)=F(\omega_1)(A_k)$ for every $k\in \N$ and so
$$
	F(\alpha)(\{\gamma\})=\lim_{k\to \infty} F(\alpha)(A_k)=
	\lim_{k\to \infty}F(\omega_1)(A_k)=F(\omega_1)(\{\gamma\}).
$$
This proves~\eqref{eqn:inicio}. 
Since $\alpha>\lambda_{0}\geq\beta(0)$, we have $F(\alpha)(K) = F(\omega_1)(K)$ (Claim~2)
and therefore $\mu_\alpha(K)=a$ (by~\eqref{defmualpha}). 
Finally, from \eqref{eqn:final}, \eqref{eqn:cola} and~\eqref{eqn:inicio} we get
$$
	\mu_\alpha(\{\gamma\})=
	F(\alpha)(\{\gamma\})-F(\omega_1)(\{\gamma\})+a\delta_{\omega_1}(\{\gamma\})=0
	\quad
	\mbox{for every }\gamma \in K \setminus [\lambda_i,\lambda_{i+1}].
$$
The proof of Claim~3 is over.\qed

{\sc Claim 4.} For every $1\leq i<\omega_1$ we have $\mu_{\lambda_i}=a \delta_{\lambda_i}$ and so
\begin{equation}\label{eqn:FLambda}
	F(\lambda_i) = \mu + a \delta_{\lambda_i} + b \delta_{\omega_1}.
\end{equation}

Proof of Claim 4. We proceed by transfinite induction on~$i$. 
The limit ordinal case follows from the $\weak^*$-continuity of $F$.
Now, suppose \eqref{eqn:FLambda} holds for some $1\leq i<\omega_1$
and let us prove it for~$i+1$.
Consider again the chain $\lambda_i = \alpha_0 < \beta_0 < \alpha_1 < \beta_1 < \ldots < \lambda_{i+1}$ 
that defines $\lambda_{i+1}$ as its supremum. By the $\weak^*$-continuity of~$F$, 
the sequence $(F(\beta_n))$ is $w^*$-convergent to~$F(\lambda_{i+1})$, which (by~\eqref{defmualpha}) is equivalent
to saying that 
$$
	\lim_{n\to \infty}\mu_{\beta_n}(A)=\mu_{\lambda_{i+1}}(A)
	\quad\mbox{for every }A \in Clop(K).
$$
By Claim~3, each $\mu_{\beta_n}$ is concentrated on~$[\lambda_i,\lambda_{i+1}]$ with 
$\mu_{\beta_n}(K)=a$. In particular, we get $\mu_{\lambda_{i+1}}(K)=a$.
In order to prove that $\mu_{\lambda_{i+1}}=a\delta_{\lambda_{i+1}}$ it only remains to check
that $\mu_{\lambda_{i+1}}$ is concentrated on~$\{\lambda_{i+1}\}$.
Fix any $A\in Clop(K)$ with $A \subseteq [0,\lambda_{i+1})$. Since $A$ is compact, we have $A \subseteq [0,\alpha_n)$
for some $n\in\N$. Then $F(\beta_m)(A)=F(\omega_1)(A)$ for every $m\geq n$
(by Claim~2 and~\eqref{eqn:defi-betan}) and so $F(\lambda_{i+1})(A)=F(\omega_1)(A)$, hence $\mu_{\lambda_{i+1}}(A)=0$ (by~\eqref{defmualpha}).
As $A$ is an arbitrary clopen set contained in~$[0,\lambda_{i+1})$, we conclude
that $\mu_{\lambda_{i+1}}$ is concentrated on~$[\lambda_{i+1},\omega_1]$. On the other hand,
if we take any $A\in Clop(K)$ with $A \subseteq (\lambda_{i+1},\omega_1]$, then
$\mu_{\beta_n}(A)=0$ for all $n\in \N$ and so $\mu_{\lambda_{i+1}}(A)=0$.
It follows that $\mu_{\lambda_{i+1}}$ is concentrated on~$\{\lambda_{i+1}\}$, which
finishes the proof of Claim~4. \qed

{\sc Claim 5.} The restriction of~$F$ to $[0,\lambda_1]$ admits a compensation.

Proof of Claim~5. Write $L:=[0,\lambda_1]\cup\{\omega_1\}$. Notice first that 
\begin{equation}\label{eqn:soporte}
	\supp(F(\alpha)) \subseteq L
	\quad\mbox{for every }\alpha \in [0,\lambda_1].
\end{equation}
Indeed, this is immediate for $\alpha=\lambda_1$ (by~\eqref{eqn:FLambda}, bearing in mind
that $\lambda_1>\lambda_{0}\geq s(\omega_1)$). Let
$\lambda_0 = \alpha_0 < \beta_0 < \alpha_1 < \beta_1 < \ldots < \lambda_{1}$ 
be the chain that defines $\lambda_{1}$ as its supremum. If we take any $\alpha < \lambda_1$, then 
$\alpha\leq\beta_{n-1}$ for some $n\in \N$ and so $s(\alpha)\leq\alpha_n<\lambda_{1}$
(by~\eqref{eqn:definalphan}), hence $\supp(F(\alpha)) \subseteq L$. 
This proves~\eqref{eqn:soporte}. Since $L$ is compact metrizable, the claim now follows from
Corollary~\ref{cor:LEMITA}. \qed

{\sc Claim~6.} There exist $0\leq a' \leq \max\{a,0\}$, $0\leq b' \leq \max\{b,0\}$ and 
$\nu\in \mathcal{M}(K)$ with $0\leq \nu \leq \mu^+$ such that:
\begin{enumerate}
\item $\nu+a'\delta_{\lambda_i}+b'\delta_{\omega_1}$ is a compensation of~$F(\lambda_i)$ for every $1\leq i < \omega_1$;
\item $\nu+(a'+b')\delta_{\omega_1}$ is a compensation of~$F(\omega_1)$.
\end{enumerate}
Proof of Claim~6. Write $c:=a+b=F(\omega_1)(\{\omega_1\})$. Observe that
$$
	F(\omega_1)=\mu+c\delta_{\omega_1}
	\quad\mbox{and}\quad
	F(\lambda_i)\stackrel{\eqref{eqn:FLambda}}{=}\mu+a\delta_{\lambda_i}+b\delta_{\omega_1}
	\mbox{ for every }1\leq i <\omega_1,
$$
hence $F(\lambda_i)(K)=F(\omega_1)(K)$ (this equality also follows from Claim~2). 
Thus, the statement of Claim~6 holds trivially if $F(\omega_1)(K)\leq 0$.
We assume that $F(\omega_1)(K)>0$ and distinguish two cases.

Case 1. If $c\geq 0$, choose $a'',b''\in \mathbb{R}$ such that 
$a''\delta_{\lambda_1}+b''\delta_{\omega_1}$ is a compensation of $a\delta_{\lambda_1}+b\delta_{\omega_1}$. 
Then $0\leq a'' \leq \max\{a,0\}$, $0\leq b'' \leq \max\{b,0\}$ and 
$a''+b''=a+b=c$. Set $\nu_0:=\mu+a''\delta_{\lambda_1}+b''\delta_{\omega_1}$
and note that $\nu_0(K)=\mu(K)+c=F(\omega_1)(K)>0$. Let $\nu_1$ be a compensation of~$\nu_0$. Then
$\nu_1(K)=\nu_0(K)$ and
$$
	0\leq \nu_1 \leq (\mu+a''\delta_{\lambda_1}+b''\delta_{\omega_1})^+
	=\mu^+ + a''\delta_{\lambda_1}+b''\delta_{\omega_1},
$$
so we can write $\nu_1=\nu+a'\delta_{\lambda_1}+b'\delta_{\omega_1}$
for some $0\leq a'\leq a''$, $0\leq b' \leq b''$ and 
$\nu\in \dualC$ with $0\leq \nu \leq \mu^+$. It is clear that
$\nu+a'\delta_{\lambda_i}+b'\delta_{\omega_1}$ is a compensation of $F(\lambda_i)$ for every $1\leq i < \omega_1$
and that $\nu+(a'+b')\delta_{\omega_1}$ is a compensation of $F(\omega_1)$.

Case 2. If $c<0$, then let $\nu$ be a compensation of~$F(\omega_1)$, so that
$\nu(K)=F(\omega_1)(K)$ and $0\leq \nu \leq (\mu+c\delta_{\omega_1})^+=\mu^+$.
In particular, $\nu$ is a compensation of~$F(\lambda_i)$ for every $1\leq i <\omega_1$,
so we can take $a'=b'=0$ to conclude the proof of Claim~6.\qed

{\sc Claim~7.} For every $1\leq i <\omega_1$ there is a $\weak^*$-continuous function
$$
	\xi_i\colon [\lambda_i,\lambda_{i+1}]\to \dualC
$$ 
such that $\xi_i(\alpha)$ is a compensation of~$\mu_\alpha$ for all $\alpha\in [\lambda_i,\lambda_{i+1}]$.  
 
Proof of Claim~7. $[\lambda_i,\lambda_{i+1}]$ is compact metrizable. Since
$\supp(\mu_\alpha) \subseteq [\lambda_i,\lambda_{i+1}]$ for every $\alpha\in [\lambda_i,\lambda_{i+1}]$
(Claims~3 and~4) and the mapping $\alpha \mapsto \mu_\alpha$ is 
$\weak^*$-continuous (see \eqref{defmualpha}), the existence of~$\xi_i$ follows
from Corollary~\ref{cor:LEMITA}. \qed

Let $G\colon[0,\lambda_1] \to \dualC$ be a compensation of $F|_{[0,\lambda_1]}$ (Claim~5).
We now define $\xi_F\colon K \to \dualC$ by
$$
	\xi_F(\alpha):=
	\begin{cases}
	G(\alpha) & \text{if $\alpha\in [0,\lambda_1]$,}\\
	\nu+a'\delta_{\lambda_i}+b'\delta_{\omega_1} & \text{if $\alpha=\lambda_i$ and $2\leq i<\omega_1$,}\\
	\nu + \tilde{a}\xi_i(\alpha) + b'\delta_{\omega_1} & \text{if $\alpha\in(\lambda_i,\lambda_{i+1})$ and $1\leq i<\omega_1$,}\\
	\nu + (a'+b')\delta_{\omega_1} & \text{if $\alpha=\omega_1$,}
	\end{cases}
$$
where $\tilde{a}:=\frac{a'}{a}$ if $a>0$ and $\tilde{a}:=0$ if $a\leq 0$. 

We next check that $\xi_F(\alpha)$ is a compensation of~$F(\alpha)$ for every~$\alpha\in K$. This is clear
for $\alpha\in [0,\lambda_1]$ (by the choice of~$G$) and 
$\alpha\in \{\lambda_i\colon 2\leq i < \omega_1\}\cup\{\infty\}$ (by Claim~6). Take 
$\alpha\in(\lambda_i,\lambda_{i+1})$ for some $1\leq i<\omega_1$. 
Then $\mu_\alpha$ is concentrated on $[\lambda_i,\lambda_{i+1}]$ with $\mu_\alpha(K)=a$ (Claim~3).
Since $\xi_i(\alpha)$ is a compensation of~$\mu_\alpha$ (Claim~7), we have
$\xi_i(\alpha)=0$ whenever $a\leq 0$, while $\xi_i(\alpha)(K)=a$ and $0\leq \xi_i(\alpha)\leq \mu_\alpha^+$ whenever $a>0$.
In any case, we have $\tilde{a}\xi_i(\alpha)(K)=a'$. Note also that $F(\alpha)(K) = F(\omega_1)(K)$ (by Claim~2, since $\alpha>\lambda_{0}\geq\beta(0)$).
We now distinguish two cases.
\begin{itemize}
\item If $F(\omega_1)(K)>0$, then
$$
	\xi_F(\alpha)(K)=(\nu + \tilde{a}\xi_i(\alpha) + b'\delta_{\omega_1})(K)=
	\nu(K)+a'+b'= F(\omega_1)(K)=F(\alpha)(K)
$$
(bear in mind that $\nu+(a'+b')\delta_{\omega_1}$ is a compensation of~$F(\omega_1)$, see Claim~6).
Since $\nu \leq \mu^+$, $\tilde{a}\xi_i(\alpha) \leq \xi_i(\alpha)\leq \mu_\alpha^+$ and $b'\leq \max\{b,0\}$, we conclude
that 
$$
	0\leq \xi_F(\alpha) \leq 
	\mu^+ + \mu_{\alpha}^+ + \max\{b,0\}\delta_{\omega_1}
	\stackrel{\eqref{defmualpha}}{=} F(\alpha)^+.
$$
This shows that $\xi_F(\alpha)$ is a compensation of~$F(\alpha)$.

\item If $F(\omega_1)(K) \leq 0$, then $\nu=0$ and $a'=b'=0$ (Claim~6), hence $\xi_F(\alpha)=0$
is the compensation of~$F(\alpha)$. 
\end{itemize}

Finally, we check that $\xi_F$ is $\weak^*$-continuous.
Observe that the continuity of~$\xi_F$ on the open set $[0,\lambda_1]\cup \bigcup_{1\leq i < \omega_1}(\lambda_i,\lambda_{i+1})$
follows at once from the $\weak^*$-continuity of~$G$ and the $\xi_i$s. On the other hand, for any $1\leq i <\omega_1$, we have
$\mu_{\lambda_{i+1}}=a\delta_{\lambda_{i+1}}$ (Claim~4), hence 
$\xi_i(\lambda_{i+1})=\max\{a,0\}\delta_{\lambda_{i+1}}$ and so
$\tilde{a}\xi_i(\lambda_{i+1})=a'\delta_{\lambda_{i+1}}$. The last equality and
the $\weak^*$-continuity of~$\xi_i$ at~$\lambda_{i+1}$ ensure
that $\xi_F$ is $\weak^*$-continuous at~$\lambda_{i+1}$. To finish the proof we 
show that $\xi_F$ is continuous at~$\omega_1$. Fix any $A\in Clop(K)$. By Lemma~\ref{constantomega1}
applied to the restriction of $\xi_F(\cdot)(A)$ to~$[0,\omega_1)$, there exists some $\alpha_A<\omega_1$ such that
$\xi_F(\alpha)(A)=\xi_F(\alpha_A)(A)=:x_A$ for all $\alpha_A\leq \alpha < \omega_1$. 
Choose $2\leq i_A<\omega_1$ such that $\lambda_i \geq \alpha_A$ for every $i_A \leq i < \omega_1$. Then
$x_A=\xi_F(\lambda_i)(A)=(\nu+a'\delta_{\lambda_i}+b'\delta_{\omega_1})(A)$
for every $i_A \leq i < \omega_1$, and so $x_A=(\nu+a'\delta_{\omega_1}+b'\delta_{\omega_1})(A)=\xi_F(\omega_1)(A)$. 

The proof of the theorem is over.
\end{proof}

\begin{rem}
Using essentially the same arguments, one can prove by induction that the ordinal interval $[0,\aleph_n]$ admits local compensation for each 
$n\in\N$. It is not so clear to us what happens at~$\aleph_\omega$.
\end{rem}

\subsection{Some open problems}\label{section5}

Let $K$ be an arbitrary compact space.

\begin{enumerate}
\item[(a)] Does $C(K)$ have the BPB property for numerical radius?
\item[(b)] Does $K$ admit local compensation if $C(K)$ has the BPB property for numerical radius?
\item[(c)] Is $K$ metrizable if it admits a compensation function?
\item[(d)] Does $K$ admit a compensation function if it admits a $B_{\dualC}$-compensation function?
\item[(e)] Does $K$ admit a $B_{\dualC}$-compensation function if it admits a closeness function?
\end{enumerate}

\subsection*{Acknowledgement}
We wish to thank O.~Kalenda for his 
valuable comments on Section~\ref{section4} which led us to improve some results
contained in a previous version of this manuscript.

\def\cprime{$'$}
\providecommand{\bysame}{\leavevmode\hbox to3em{\hrulefill}\thinspace}
\providecommand{\MR}{\relax\ifhmode\unskip\space\fi MR }
\providecommand{\MRhref}[2]{%
  \href{http://www.ams.org/mathscinet-getitem?mr=#1}{#2}
}
\providecommand{\href}[2]{#2}

\bibliographystyle{amsplain}

\end{document}